\documentclass[11pt]{amsart}
\usepackage{amsfonts, amsthm, amsmath, amssymb}
\usepackage[all]{xy}
\usepackage[top=1in,bottom=1in,left=1in,right=1in]{geometry}

\def\C{\mathbb{C}}
\def\HH{\mathfrak{H}}
\def\M{\mathcal{M}}
\def\MM{\mathbb{M}}

\def\Q{\mathbb{Q}}
\def\R{\mathbb{R}}
\def\Z{\mathbb{Z}}

\newcommand{\Hom}{\operatorname{Hom}}

\newcommand{\lcm}{\operatorname{lcm}}

\newtheorem{thm}{Theorem}[section]

\newtheorem{prop}[thm]{Proposition}
\newtheorem{lem}[thm]{Lemma}
\newtheorem{cor}[thm]{Corollary}

\theoremstyle{definition}
\newtheorem*{defn}{Definition}

\usepackage[pdftex]{hyperref}

\begin{document}

\title{Generalized moonshine I: Genus-zero functions}
\author{Scott Carnahan}

\begin{abstract}
We introduce a notion of Hecke-monicity for functions on certain moduli spaces associated to torsors of finite groups over elliptic curves, and show that it implies strong invariance properties under linear fractional transformations.  Specifically, if a weakly Hecke-monic function has algebraic integer coefficients and a pole at infinity, then it is either a holomorphic genus-zero function invariant under a congruence group or of a certain degenerate type.  As a special case, we prove the same conclusion for replicable functions of finite order, which were introduced by Conway and Norton in the context of monstrous moonshine.  As an application, we introduce a class of Lie algebras with group actions, and show that the characters derived from them are weakly Hecke-monic.  When the Lie algebras come from chiral conformal field theory in a certain sense, then the characters form holomorphic genus-zero functions invariant under a congruence group.
\end{abstract}

\maketitle

\tableofcontents

\subsubsection*{Introduction}

We define a holomorphic genus-zero function to be a holomorphic function $f: \HH \to \C$ on the complex upper half-plane, with finite order poles at cusps, such that there exists a discrete group $\Gamma_f \subset SL_2(\R)$ for which $f$ is invariant under the action of $\Gamma_f$ by M\"obius transformations, inducing a dominant injection $\HH/\Gamma_f \to \C$.  A holomorphic genus-zero function $f$ therefore generates the field of meromorphic functions on the quotient of $\HH$ by its invariance group.  In this paper, we will be interested primarily in holomorphic congruence genus-zero functions, namely those $f$ for which $\Gamma(N) \subset \Gamma_f$ for some $N>0$.  These functions are often called Hauptmoduln.

The theory of holomorphic genus-zero modular functions began with Jacobi's work on elliptic and modular functions in the early 1800s, but did not receive much attention until the 1970s, when Conway and Norton found numerical relationships between the Fourier coefficients of a distinguished class of these functions and the representation theory of the largest sporadic finite simple group $\MM$, called the monster.  Using their own computations together with work of Thompson and McKay, they formulated the monstrous moonshine conjecture, which asserts the existence of a graded representation $V^\natural = \bigoplus_{n \geq -1} V_n$ of $\MM$, such that for each $g \in \MM$, the graded character $T_g(\tau) := \sum_{n \geq -1} \text{ Tr }(g|V_n)q^n$ is a normalized holomorphic genus-zero function invariant under some congruence group $\Gamma_0(N)$, where the normalization indicates a $q$-expansion of the form $q^{-1} + O(q)$.  More precisely, they gave a list of holomorphic genus-zero functions $f_g$ as candidates for $T_g$, whose first several coefficients arise from characters of the monster, and whose invariance groups $\Gamma_{f_g}$ contain some $\Gamma_0(N)$ \cite{CN79}.  By unpublished work of Koike, the power series expansions of $f_g$ satisfy a condition known as complete replicability, given by a family of recurrence relations, and the relations determine the full expansion of $f_g$ from only the first seven coefficients of $f_{g^n}$ for $n$ ranging over powers of two.

This conjecture was proved in \cite{B92} using a combination of techniques from the theory of vertex algebras and infinite dimensional Lie algebras: $V^\natural$ was constructed in \cite{FLM88} as a vertex operator algebra, and Borcherds used it to construct the monster Lie algebra, which inherits an action of the monster.  Since the monster Lie algebra is a generalized Kac-Moody algebra with a homogeneous action of $\MM$, it admits twisted denominator formulas, which relate the coefficients of $T_g$ to characters of powers of $g$ acting on the root spaces.  In particular, each $T_g$ is completely replicable, and Borcherds completed the proof by checking that the first seven coefficients matched the expected values.

Knowing this theorem and some additional data, one can ask at least two natural questions:
\begin{enumerate}
\item The explicit checking of coefficients at the end of the proof has been called a ``conceptual gap'' in \cite{CG97}, and this problem has been rectified in some sense by replacing that step with non-computational theorems:
\begin{enumerate}
\item[(i)] In \cite{B92}, it was pointed out that the twisted denominator formulas imply that the functions $T_g$ are completely replicable.
\item[(ii)]  \cite{K94} has a proof that completely replicable functions satisfy lots of modular equations.
\item[(iii)] \cite{CG97} has a proof that power series satisfying enough modular equations are either holomorphic genus-zero and invariant under $\Gamma_0(N)$, or of a particular degenerate type resembling trigonometric functions.
\item[(iv)] One can eliminate the degenerate types, either by appealing to a result in \cite{M94} asserting that completely replicable series that are ``$J$-final'' (a condition that holds for all $T_g$, since $T_1=J$) are invariant under $\Gamma_0(N)$ for some $N$, or by using a result in \cite{DLM00} that restricts the form of the $q$-expansions at other cusps.
\end{enumerate}
Since modular functions live on moduli spaces of structured elliptic curves, one might ask, how do these recursion relations and replicability relate to group actions and moduli of elliptic curves?
\item One might wonder if similar behavior applies to groups other than the monster.  It was suggested in \cite{CN79} that other sporadic groups may exhibit properties resembling moonshine, and \cite{Q81} produced strong computational evidence for this.  These data were organized in \cite{N87} into the generalized moonshine conjecture, which asserts the existence of a generalized character $Z$ that associates a holomorphic function on $\HH$ to each commuting pair of elements of the monster, satisfying the following conditions:
\begin{enumerate}
\item $Z(g,h,\tau)$ is invariant under simultaneous conjugation of $g$ and $h$.
\item For any $\binom{ab}{cd} \in SL_2(\Z)$, there exists a nonzero constant $\gamma$ (said to be a $24$th root of unity in \cite{N01}) such that $Z(g^ah^c,g^bh^d, \tau) = \gamma Z(g,h,\frac{a\tau+b}{c\tau+d})$.
\item The coefficients of the $q$-expansion of $Z(g,h,\tau)$ for fixed $g$ form characters of a graded representation of a central extension of $C_\MM(g)$.
\item $Z(g,h,\tau)$ is either constant or holomorphic congruence genus-zero.
\item $Z(g,h,\tau) = j(\tau)-744 = q^{-1} + 196884q + 21493760q^2 + \dots$ if and only if $g=h=1$.
\end{enumerate}
This conjecture is still open, but if we fix $g=1$, it reduces to the original moonshine conjecture.  One might hope that techniques similar to those used in \cite{B92} can be applied to attack this conjecture in other cases, and the answer seems to be affirmative.  For example, \cite{H03} affirms the cases where $g$ is an involution in conjugacy class 2A, using a construction of a vertex algebra with baby monster symmetry, and roughly following the outline of Borcherds's proof.  However, there are obstructions to making this technique work in general, since there are many elements of the monster for which we do not know character tables of centralizers or their central extensions.  One might ask, is there a reasonably uniform way to generate holomorphic congruence genus-zero functions from actions of groups on certain Lie algebras?
\end{enumerate}

This paper is an attempt to unify the two questions, and set the stage for a more detailed study of the infinite dimensional algebraic structures involved.  The main result is that modular functions (and more generally, singular $q$-expansions with algebraic integer coefficients) that are holomorphic on $\HH$ and satisfy a certain Hecke-theoretic property are holomorphic congruence genus-zero or degenerate in a specified way.  As a special case, we find that finite order replicable functions with algebraic integer coefficients, as defined in section 4, satisfy the same property.  The algebraic integer condition is sufficient for our purposes, since we intend to use this theorem in the context of representations of finite groups.  Since modular functions with algebraic coefficients that are holomorphic on $\HH$ and invariant under a congruence group have bounded denominators (see Theorem 3.52 of \cite{S71} and divide by a suitable power of $\Delta$), it is reasonable to conjecture that all holomorphic congruence genus-zero functions whose poles have integral residue and constant term have algebraic integer coefficients.

We apply the theory to show that when a group acts on an infinite dimensional Lie algebra with a special form, the character functions are holomorphic congruence genus-zero.  We call these algebras ``Fricke compatible'' because they have the form we expect from elements $g \in \MM$ for which the function $T_g$ is invariant under a Fricke involution $\tau \mapsto -1/N\tau$.  Later papers in this series will focus on constructing these and other (``non-Fricke compatible'') Lie algebras, first by generators-and-relations, and then by applying a version of the no-ghost theorem to abelian intertwiner algebras.  At the time of writing, this strategy does not seem to yield a complete proof of generalized moonshine, because of some subtleties in computing eigenvalue multiplicities for certain cyclic groups of composite order acting on certain irreducible twisted modules of $V^\natural$.  It is possible that some straightforward method of controlling these multiplicities has escaped our attention, but for the near future we plan to rest the full result on some precisely stated assumptions.

Most of the general ideas in the proof are not new, but our specific implementation bears meaningful differences from the existing literature.  In fact, Hecke operators have been related to genus-zero questions since the beginning of moonshine, under the guise of replicability, and the question of relating replication to holomorphic genus-zero modular functions was proposed in the original paper \cite{CN79}.  However, the idea of using an interpretation via moduli of elliptic curves with torsors is relatively recent, and arrives from algebraic topology.  Equivariant Hecke operators, or more generally, isogenies of (formal) groups, can be used to describe operations on complex-oriented cohomology theories like elliptic cohomology, and they were introduced in various forms in \cite{A95} and \cite{B98}.  More precise connections to generalized moonshine were established in \cite{G07}.

\subsubsection*{Summary} In the first section, we introduce Hecke operators, first as operators on modular functions, and then on general power series.  In section 2, we define Hecke-monicity and prove elementary properties of Hecke-monic functions.  In section 3, we relate Hecke-monicity to equivariant modular equations.  Most of this step is a minor modification of part of Kozlov's master's thesis \cite{K94}.  In section 4, we prove a holomorphic congruence genus-zero theorem, and our proof borrows heavily from \cite{CG97}.  Most of the arguments require minimal alteration from the form given in that paper, so in those cases we simply indicate which changes need to be made.  In section 5, we focus on the special case of replicable functions, and we show that those with finite order and algebraic integer coefficients are holomorphic congruence genus-zero or of a specific degenerate type.  In section 6, we conclude with an application to groups acting on Lie algebras, and show that under certain conditions arising from conformal field theory, the characters from the action on homology yield holomorphic congruence genus-zero functions.

\subsubsection*{Acknowledgments} The author thanks Richard Borcherds, Gerald Hoehn, Jacob Lurie, and Arne Meurman.  Borcherds suggested generalized moonshine as a dissertation project, and gave much useful advice and perspective.  Hoehn offered many helpful comments on an earlier draft, from which this paper was drawn.  Lurie provided inspiring conversations, and suggested that twisted denominator formulas appear to be constructed from equivariant Hecke operators.  Meurman kindly mailed a copy of Kozlov's masters thesis \cite{K94} across the Atlantic Ocean.  The author would also like to thank the referees for many helpful suggestions and corrections, and an anonymous member of the editorial board for suggestive hints concerning complex analytic moduli.  This material is partly based upon work supported by the National Science Foundation under grant DMS-0354321.

\section{Equivariant Hecke operators}

The aims of this section are to introduce a combinatorial formula for equivariant Hecke operators for functions that are not necessarily modular, and to prove some elementary properties.  The geometric language of stacks, and torsors is used in only in this section, and only to justify the claim that these Hecke operators occur naturally.  It is not strictly necessary for understanding the formula, and the reader may skip everything in this section except for the statements of the lemmata without missing substantial constituents of the main theorem.

Let $G$ be a finite group, and let $\mathcal{M}_{Ell}^G$ denote the analytic stack of elliptic curves equipped with $G$-torsors (also known as the Hom stack $\underline{Hom}(\mathcal{M}_{Ell}, BG)$).  Objects in the fibered category are diagrams $P \to E \overset{e}{\leftrightarrows} S$ of complex analytic spaces satisfying:
\begin{enumerate}
\item $P \to E$ is a $G$-torsor (i.e., an analytically locally trivial principal $G$-bundle).
\item $E \to S$ is a smooth proper morphism, whose fibers are genus one curves.
\item $e$ is a section of $E \to S$.
\end{enumerate}
Morphisms are fibered diagrams satisfying the condition that the torsor maps are $G$-equivariant.  This is a smooth Deligne-Mumford stack (in the sense of \cite{BN06}).  For each positive integer $n$, we consider the degree $n$ $G$-Hecke correspondence, which is given by the diagram $\mathcal{I}_n^G \underset{t}{\overset{s}{\rightrightarrows}} \mathcal{M}_{Ell}^G$, defined as follows: $\mathcal{I}^G_n$ is the stack of $n$-isogenies of elliptic curves with $G$-torsors.  Its objects are diagrams
\[ \xymatrix{ & & P_2 \ar[d]^\pi \\ E_1 \ar[rr]^f \ar@<1ex>[rd] & & E_2 \ar@<-1ex>[ld] \\ & S \ar@/_/[ur]_{e_2} \ar@/^/[ul]^{e_1} } \]
of complex analytic spaces, where
\begin{enumerate}
\item $\pi$ is a $G$-torsor.
\item $E_1 \to S$ and $E_2 \to S$ are smooth proper morphisms, whose geometric fibers are genus one curves.
\item $e_1$ and $e_2$ are sections of the corresponding maps.
\item $f$ is an $n$-isogeny, i.e., a homomorphism whose kernel is a finite flat $T$-group scheme of length $n$ (in particular, $f$ makes the evident triangle diagrams commute).
\end{enumerate}
As before, morphisms are fibered diagrams satisfying the condition that the torsor maps are $G$-equivariant.  The two canonical maps $s,t: \mathcal{I}^G_n \to \mathcal{M}_{Ell}^G$ are defined by $s(E_1, E_2, P_2, S) = (E_1 \times_{E_2} P_2 \to E_1 \leftrightarrows S)$ and $t(E_1, E_2, P_2, S) = (P_2 \to E_2 \leftrightarrows S)$ for objects, and the evident diagrams are given for morphisms.  One can show that $s$ and $t$ are finite \'etale morphisms of degree $\psi(n) = \prod_{p|n} (1+1/p)$, essentially by transferring the arguments of Proposition 6.5.1 in \cite{KM85} to the analytic setting.

The Hecke operator $nT_n$ is defined as the canonical trace map $s_* t^*$ on the structure sheaf of $\mathcal{M}_{Ell}^G$.  Over each point, it satisfies the formula:
\[ nT_n(f)(P \overset{G}{\to} E) = \underset{|H|=n}{\sum_{0 \to H \to E' \overset{\pi}{\to} E \to 0}} f( \pi^*P \overset{G}{\to} E' ), \]
where the sum is over all degree $n$ isogenies to $E$.  When $G$ is trivial, this is the usual weight zero Hecke operator.

We wish to describe these operators in terms of functions on the complex upper half plane, and this requires an analytic uniformization of the moduli problem.  Following the unpublished book \cite{C?} and \cite{D69}, the upper half plane classifies pairs $(\pi, \psi)$, where $\pi: E \to S$ is an elliptic curve, and $\psi: R^1 \pi_* \underline{\mathbb{Z}} \to \underline{\mathbb{Z}^2}$ is an isomorphism whose exterior square induces the (negative of the) canonical isomorphism $R^2 \pi_* \underline{\mathbb{Z}}(1) \to \underline{\mathbb{Z}}$.  By dualizing, one has a universal diagram $\mathbb{Z}^2 \times \mathfrak{H} \to \mathbb{C} \times \mathfrak{H} \to \mathfrak{H}$ defining a family of elliptic curves $E \to \mathfrak{H}$ equipped with oriented bases of fiberwise $H_1$.  There is an $SL_2(\Z)$-action from the left via M\"obius transformations (equivalently, changing the oriented homology basis of a curve), that induces a surjection onto $\mathcal{M}_{Ell}$.  When we consider $G$-torsors over elliptic curves equipped with homology bases, we find that they are classified up to isomorphism by their monodromy along the basis, given by a conjugacy class of a pair of commuting elements in $G$.  Since a pair of commuting elements is a homomorphism from $\mathbb{Z} \times \mathbb{Z} \to G$, there is also an action of $SL_2(\mathbb{Z})$ on commuting pairs of elements from the right via $(g,h)\binom{ab}{cd} = (g^ah^c,g^bh^d)$.  We obtain an identification on the level of points:

\[ \mathcal{M}_{Ell}^G \cong \Hom(\Z \times \Z, G)/G \underset{SL_2(\Z)}{\times} \HH \]
where the quotient by $G$ arises from the action by conjugation on the target.  The identification can be promoted to an equivalence of analytic stacks by choosing a uniformizing moduli problem of triples $(P \to E \to S, \psi, \tilde{e}: S \to P)$, where $\psi$ is as above, and $\tilde{e}$ is a lift of $e: S \to E$ to the $G$-torsor.  It is represented by a disjoint union of upper half planes in bijection with $\Hom(\Z \times \Z, G)$, and one obtains the quotient via commuting actions of $G$ (on the set of lifts $\tilde{e}$) and $SL_2(\mathbb{Z})$ (on the set of $\psi)$.

With this presentation, we can recast the Hecke operators in terms of holomorphic functions on the complex upper half plane $\HH$.  We can write any $f: \mathcal{M}_{Ell}^G \to \C$ as $f(g,h,\tau)$, for $g$ and $h$ commuting elements of $G$, and $\tau \in \HH$.  $f$ is invariant under simultaneous conjuation on $g$ and $h$, and satisfies $f(g^ah^c, g^bh^d,\tau) = f(g,h,\frac{a\tau+b}{c\tau+d})$.  In particular, for fixed $g$ and $h$, $f(g,h,\tau)$ is a holomorphic modular function, invariant under $\Gamma(\lcm(|g|,|h|))$.  Following \cite{G07}, we map the homology basis to $(-1,\tau)$, so $(g,h,\tau)$ describes an elliptic curve $\C/\langle -1,\tau \rangle$ equipped with a $G$-torsor with monodromy $(g,h)$.  (Many texts use the basis $(1,\tau)$ when studying modular functions, mostly because $\tau$ then becomes the ratio of periods, but our convention is what we need for the left $SL_2(\Z)$ action to work correctly.)  Any degree $n$ isogeny from an elliptic curve $E'$ to $\C/\langle -1,\tau \rangle$ can be described as the identity map on $\C$, where $E'$ is the quotient by a unique index $n$ sublattice of $\langle -1,\tau \rangle$.  Since we are assuming $SL_2(\Z)$-equivariance of $f$, we can choose any basis, and get the same value from $f$.  We preferentially choose bases $(-d, a\tau+b)$, where $d$ is the index in $\Z$ of the intersection of the sublattice with $\Z$, and we get the following formula:

\[ T_n(f)(g,h,\tau) = \frac{1}{n} \underset{0 \leq b<d}{\sum_{ad=n}} f\left(g^d, g^{-b}h^a, \frac{a\tau+b}{d}\right) \]

Suppose we wanted to extend the notion of Hecke operator to a larger class of functions, in particular ones which are not a priori completely independent of the choice of homology basis of our elliptic curve.  One might hope that we could have a good notion for all functions on $\Hom(\Z \times \Z, G)/G \times \HH$.  Unfortunately, there is no canonical choice of homology basis for $E'$ (i.e., a basis for the index $n$ sublattice of $\langle -1,\tau \rangle$), and it is difficult make choices in a systematic way that makes the sum a canonical quantity, so we do not know of any definition of Hecke operator for arbitrary functions on $\Hom(\Z \times \Z, G)/G \times \HH$ that is particularly natural.  However, there is an intermediate form of equivariance for which we {\em can} make a canonical definition, using the subgroup $\pm \Z := \{ \pm \binom{1n}{01} | n \in \Z \} \cong \Z \times \Z/2\Z$.  Invariance under this group implies that for a function $f$ on $\Hom(\Z \times \Z, G)/G \underset{\pm \Z}{\times} \HH$, $f(g,gh,\tau) = f(g,h,\tau+1)$, so the Fourier expansion of $f(g,h,\tau)$ is a power series in $q^{1/|g|}$ that converges on the punctured open unit disc parametrized by $q^{1/|g|}$, $|q|<1$.  We will assume the existence of a lower bound on exponents, i.e., that all of our power series are Laurent series.

We can interpret this geometrically.  The quotient $\Hom(\Z \times \Z, G)/G \underset{\pm \Z}{\times} \HH$ is a disjoint union of punctured unit discs, and parametrizes $G$-torsors over elliptic curves that are equipped with a distinguished primitive element of $H_1$ (up to sign - the $-1$ automorphism inverts the monodromy and fixes the curve, so as long as we remember that any function is invariant under this transformation, we can safely ignore it).  This element functions as the first element in the homology basis, since the action of $\pm \Z$ renders all choices of second oriented basis element equivalent.  It also uniquely determines a multiplicative uniformization $\C^\times \overset{\pi}{\to} E$ with kernel $\langle q \rangle$, $|q|<1$.  One can classify the $G$-torsors over an elliptic curve with multiplicative uniformization by studying its monodromy.  Monodromy along the primitive homology element gives a distinguished element $g$, up to conjugacy.  Monodromy along a path from $1$ to $q$ in $\C^\times$ yields a commuting element $h$, that is unique up to conjugation that is simultaneous with $g$.  However, the set of homotopy classes of paths to $q$ is a $\Z$-torsor given by winding number around zero, and the action changes this monodromy by powers of $g$, so the equivalence classes of $G$-torsors are determined by assigning a commuting element $h$ not to $q$, but to a choice of $q^{1/|g|}$.

\begin{defn} Given an elliptic curve $E$ equipped with a multiplicative uniformization, a restricted degree $n$ isogeny is a pullback diagram:

\[ \xymatrix{
\C^\times \ar[r] \ar[d] & \C^\times \ar[d] \\ E' \ar[r] & E
} \]

where the bottom row is a degree $n$ isogeny of elliptic curves.
\end{defn}

The map on top is then given by a $d$th power map, for some $d|n$.  If we examine kernels of the uniformization, we find that this induces an inclusion $\Z \to \Z$ by multiplication by $a := n/d$.  If we let $q$ generate the kernel of the uniformization on the target, the isogenies that pull back to the $d$th power map on $\C^\times$ are then classified by $d$th roots of $q^a$ in the source, and there are exactly $d$ of them.  In particular, there is a bijection between degree $n$ isogenies in the classical sense and degree $n$ restricted isogenies.  Each restricted isogeny then has the form

\[ \xymatrix{ \C^\times/q^{\frac{a}{d}\Z} \ar[r]^{z \mapsto z^d} & \C^\times/q^\Z } \]

We can rephrase this using lattices: A uniformized elliptic curve is given by an equivalence class of lattices $\langle -1, \tau \rangle$, where we consider two lattices equivalent if the second elements differ by an integer.  The isogeny condition is equivalent to demanding that the distinguished homology basis element is $-d$ for some $d|n$, as we chose before.

We can now define our Hecke operators, by summing over pullbacks along our restricted isogenies.
\begin{lem} Given a function $f$ on $\Hom(\Z \times \Z, G)/G \underset{\pm \Z}{\times} \HH$, define the function $n\hat{T}_nf$ on the same space by assigning to each elliptic curve equipped with a $G$-torsor and multiplicative uniformization the sum of $f$ evaluated on the sources of restricted isogenies of degree $n$.  Then:

\[ n\hat{T}_nf(g,h,\tau) = \underset{0 \leq b<d}{\sum_{ad=n}} f\left(g^d, g^{-b}h^a, \frac{a\tau+b}{d}\right), \]
i.e., we get the same formula for Hecke operators as we would over $\M^G_{Ell}$.
\end{lem}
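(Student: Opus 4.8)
The plan is to set up a dictionary between the geometric objects (restricted isogenies together with the multiplicative uniformizations they induce on their sources) and the combinatorial data $(a,b,d)$, and then to compute the monodromy of each pullback torsor by restricting the monodromy representation of the original torsor to a sublattice.

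First I would use the lattice description established just above the statement: a uniformized elliptic curve is $\C/\langle -1, \tau\rangle$ with multiplicative uniformization $\C^\times \to E$ of kernel $\langle q\rangle$, $q = e^{2\pi i\tau}$, and a restricted degree $n$ isogeny is recorded by an index-$n$ sublattice $L' \subset \langle -1,\tau\rangle$ whose distinguished primitive homology element is $-d$, where $d = [\Z : L' \cap \Z]$. Such sublattices admit the canonical basis $(-d, a\tau+b)$ with $ad = n$ and $0 \le b < d$, so restricted isogenies of degree $n$ are in bijection with triples $(a,b,d)$ satisfying $ad=n$, $0\le b<d$. This reduces the problem to evaluating $f$ on the source attached to each triple.

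Second I would identify each source as a point of the moduli space. The source curve is $\C/\langle -d, a\tau+b\rangle$, and the rescaling $z\mapsto z/d$ is an isomorphism onto the standard-form curve $\C/\langle -1, \tau'\rangle$ with $\tau' = \frac{a\tau+b}{d}$, carrying the canonical basis $(-d, a\tau+b)$ to $(-1,\tau')$. The key point is that the multiplicative uniformization of the source supplied by the restricted-isogeny diagram --- the top $\C^\times$ mapping by $z\mapsto z^d$ --- is precisely the uniformization corresponding to this rescaled basis, so the distinguished primitive element of the source is the image of $-d$. This is exactly what makes $n\hat{T}_n f$ well-defined on merely $\pm\Z$-invariant functions, since such $f$ requires a distinguished primitive element but no full basis.

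Third I would compute the monodromy. Writing $\rho$ for the monodromy representation of the torsor on $E$, with $\rho(-1)=g$ and $\rho(\tau)=h$, the pullback torsor on the source has monodromy $\rho|_{L'}$. Evaluating on the canonical basis, and using that $g$ and $h$ commute, gives $\rho(-d)=g^d$ and $\rho(a\tau+b)=\rho(-1)^{-b}\rho(\tau)^a=g^{-b}h^a$. Hence the source, in standard form, is the point $(g^d, g^{-b}h^a, \frac{a\tau+b}{d})$, and summing $f$ over all restricted isogenies of degree $n$ yields the asserted formula. The main obstacle is the bookkeeping of the second step: I must check that the fractional-power datum pinning down the second monodromy element on the target (the assignment of $h$ to a choice of $q^{1/|g|}$ rather than to $q$) transports correctly through the $d$th-power map, so that the $d$ distinct $d$th roots of $q^a$ labelling the restricted isogenies reproduce exactly the shift $h^a \mapsto g^{-b}h^a$ for $0\le b<d$, and that replacing $b$ by $b+d$ changes the source datum by a $\pm\Z$-transformation, consistent with $f(g,gh,\tau)=f(g,h,\tau+1)$. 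Once this winding-number computation is reconciled with the clean lattice restriction, the monodromy evaluation is immediate and the two formulas coincide.
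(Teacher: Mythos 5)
Your proposal is correct and follows essentially the same route as the paper: parametrize restricted degree-$n$ isogenies by index-$n$ sublattices with canonical basis $(-d, a\tau+b)$, read off the pullback monodromy as $(g^d, g^{-b}h^a)$ by restriction, rescale by $1/d$ to land at $\frac{a\tau+b}{d}$, and verify independence of the coset representative $b$ modulo $d$ via $\pm\Z$-equivariance. The one check you flag as the "main obstacle" is exactly the short computation the paper carries out explicitly, namely that replacing $b$ by $b+kd$ multiplies the second monodromy element by $g^{kd}$ while translating $\tau'$ by $-k$, which cancels under $f(g,gh,\tau)=f(g,h,\tau+1)$.
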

\begin{proof}
Fix an elliptic curve $E$, with a multiplicative uniformization and a $G$-torsor.  We may assume that $E \cong \C/\langle -1, \tau \rangle$ for some $\tau \in \HH$, where the path from $0$ to $-1$ along $\R$ maps to the distinguished homology element.  Let $g$ be the monodromy of the $G$-torsor along the image of this path, and let $h$ be the monodromy along the image of a path from $0$ to $\tau$.  Fix a restricted isogeny, i.e., an index $n$ sublattice of $\langle -1, \tau \rangle$ together with a (uniquely defined) fixed negative integer $-d$, $d|n$.  The path from $0$ to $-d$ is the chosen primitive homology element of the source elliptic curve.  The monodromy of the $G$-torsor along the image of this path is $g^d$.  The sublattice is characterized by a second homology generator $a\tau+b$ for $a = n/d$, and $b$ is uniquely determined modulo $d$.  The generator then has monodromy $h^ag^{-b}$, and by applying the $\frac{1}{d}$-dilation homothety, the elliptic curve is given by the point $\frac{a\tau+b}{d}$.  To show that the formula above holds, it suffices to show that $f$, evaluated on these generators, does not depend on which coset representative modulo $d$ we choose.  This independence arises from the $\pm \Z$-equivariance, i.e., if we choose $b$ and $b'$ such that $b-b' = kd$, then
\[ \begin{aligned}
f\left(g^d, g^{-b'}h^a, \frac{a\tau+b'}{d}\right) &= f\left(g^d, g^{-b}h^ag^{kd}, \frac{a\tau+b - kd}{d}\right) \\
&= f\left(g^d, g^{-b}h^a, \frac{a\tau+b}{d} - k + k\right) \\
&= f\left(g^d, g^{-b}h^a, \frac{a\tau+b}{d}\right)
\end{aligned} \]
\end{proof}

From now on, we will use the notation $nT_n$ for this Hecke operator, instead of $n\hat{T}_n$.

\begin{lem}
If $f$ is a function on $\Hom(\Z \times \Z, G)/G \underset{\pm \Z}{\times} \HH$, then
\[ T_k T_m f(g,h,\tau) = \sum_{t|(k,m)} \frac{1}{t} T_{km/t^2} f(g^t, h^t, \tau). \]
\end{lem}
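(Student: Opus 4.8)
The plan is to prove this composition formula for Hecke operators directly by unwinding the explicit sum formula from the previous lemma. The statement is the standard Hecke-algebra multiplication relation $T_k T_m = \sum_{t \mid (k,m)} \frac{1}{t} T_{km/t^2}$, but now carried out in the equivariant setting where the group elements $(g,h)$ transform along with $\tau$. Since we already have the closed formula $nT_n f(g,h,\tau) = \sum_{ad=n, 0 \le b < d} f(g^d, g^{-b}h^a, \tfrac{a\tau+b}{d})$, everything reduces to a bookkeeping computation: expand $T_k T_m f$ as a double sum, then reorganize the summation indices to match the right-hand side. The only genuinely new feature compared to the classical $SL_2(\Z)$ case is tracking how the group elements compose, so I must check that the group-element arguments transform consistently with the $\tau$-arguments under the reindexing.

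\textbf{First} I would write out $km\, T_k T_m f(g,h,\tau)$ by applying the inner operator $mT_m$ to get terms indexed by $(a_1, d_1, b_1)$ with $a_1 d_1 = m$, $0 \le b_1 < d_1$, producing arguments $(g^{d_1}, g^{-b_1}h^{a_1}, \tfrac{a_1\tau+b_1}{d_1})$; then apply $kT_k$ to each such term, introducing a second set of indices $(a_2, d_2, b_2)$ with $a_2 d_2 = k$, $0 \le b_2 < d_2$. Acting on the first argument $g^{d_1}$ replaces it by $g^{d_1 d_2}$; the $\tau$-argument becomes $\tfrac{a_2 (a_1\tau + b_1)/d_1 + b_2}{d_2} = \tfrac{a_1 a_2 \tau + a_2 b_1 + b_2 d_1}{d_1 d_2}$. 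The second group argument requires care: applying the $T_k$ substitution to $g^{-b_1} h^{a_1}$ (treated as the ``$h$'' slot with the new ``$g$'' being $g^{d_1}$) yields $(g^{d_1})^{-b_2} (g^{-b_1}h^{a_1})^{a_2} = g^{-b_1 a_2 - b_2 d_1} h^{a_1 a_2}$. So a typical term is indexed by the exponent pattern $(d_1 d_2, a_1 a_2)$ together with the combined offset $a_2 b_1 + b_2 d_1$.

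\textbf{The key step} is to match these terms against the right-hand side. Set $a = a_1 a_2$ and $d = d_1 d_2$, so $ad = km$. For each divisor $t \mid (k,m)$, the term $\frac{1}{t} T_{km/t^2} f(g^t, h^t, \tau)$ contributes summands indexed by $a' d' = km/t^2$ and $0 \le b' < d'$ with arguments $((g^t)^{d'}, (g^t)^{-b'}(h^t)^{a'}, \tfrac{a'\tau+b'}{d'})$, i.e.\ $(g^{td'}, g^{-tb'}h^{ta'}, \tfrac{a'\tau+b'}{d'})$. The classical content is the combinatorial fact (essentially the formula for composing upper-triangular integer matrices modulo $SL_2(\Z)$-left-action) that the map sending $((a_1,d_1,b_1),(a_2,d_2,b_2)) \mapsto (a,d,\text{offset})$ has fibers organized exactly so that, after collecting terms with the same $(a,d)$ and reducing the offset modulo $d$, each value occurs with the multiplicity predicted by the $t$-sum, where $t = \gcd$ of the relevant indices. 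I would invoke the standard classical identity here rather than rederive it, and then verify that the group-element arguments are consistent: whenever two classical terms coincide in their $(a,d,b \bmod d)$ data, the $\pm\Z$-equivariance (exactly as used in the proof of the preceding lemma) guarantees that shifting the offset by multiples of $d$ leaves $f$ unchanged, so the exponents $g^{td'} = g^d$, $h^{ta'} = h^a$ line up and the factors $\frac{1}{t}$ emerge from the number of coincident preimages.

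\textbf{The main obstacle} I anticipate is bookkeeping the group-theoretic offset $a_2 b_1 + b_2 d_1$ correctly: I must confirm that reducing this combined offset modulo $d = d_1 d_2$ both identifies terms with the correct multiplicity \emph{and} leaves the group element $g^{-(a_2 b_1 + b_2 d_1)} h^{a_1 a_2}$ invariant under the allowed shifts, matching $g^{-tb'}h^{ta'}$ after extracting the common factor $t$. This is where the equivariant version genuinely differs from the scalar case, since in the classical setting one only tracks the $\tau$-coordinate; here one must check that the $\Z\times\Z \to G$ monodromy data transforms compatibly, which follows because the substitution rule $(g,h)\binom{ab}{cd} = (g^ah^c, g^bh^d)$ is a right action and composing the two restricted isogenies corresponds to multiplying the associated integer matrices. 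Once that compatibility is verified, the formula reduces to the known classical composition law and the proof concludes.
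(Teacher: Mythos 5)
Your strategy coincides with the paper's: expand $km\,T_kT_mf$ via the explicit formula, reindex by $a=a_1a_2$, $d=d_1d_2$ and a combined offset, and use $\pm\Z$-equivariance to reduce the offset modulo $d_1d_2$ while the $h$-slot absorbs the compensating power of $g^{d_1d_2}$. One minor slip first: evaluating $T_k(T_mf)$ means substituting the $T_k$-transformed arguments \emph{into} the terms of $T_mf$, so a typical $\tau$-argument is $\frac{a_1\frac{a_2\tau+b_2}{d_2}+b_1}{d_1}$ with offset $a_1b_2+b_1d_2$, not $\frac{a_2\frac{a_1\tau+b_1}{d_1}+b_2}{d_2}$ as you wrote; what you actually computed is $T_mT_kf$. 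Since the right-hand side is symmetric in $k$ and $m$ this is harmless, but it should be corrected.

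The substantive gap is that the one step the paper genuinely has to prove is precisely the step you propose to ``invoke \ldots rather than rederive.'' The classical relation $T_kT_m=\sum_{t}\frac1t T_{km/t^2}$ is a statement about operators on translation-invariant functions; here $f$ is only $\pm\Z$-equivariant and the operators carry group-element data, so you cannot cite the operator identity itself --- only the underlying combinatorial count, which must therefore be stated and checked. Concretely, in the paper's notation: after extracting $t=(a,d')$ (the gcd of the ``$a$''-index of the inner operator and the ``$d$''-index of the outer one, which is why $t\mid(k,m)$) and reducing to $(a,d')=1$ with $ad=m/t$, $a'd'=k/t$, $0\le b<d$, $0\le b'<td'$, one must show that for each $0\le b''<dd'$ the congruence $ab'+bd'\equiv b''\pmod{dd'}$ has exactly $t$ solutions $(b,b')$; this multiplicity is what cancels against the normalizations to yield the factor $\frac1t$. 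The paper proves this in a few lines from $(a,d')=1$ (any two solutions satisfy $d'\mid b'-c'$, giving $t$ choices of $c'$, each determining $c$). Your proposal correctly identifies everything else that needs verifying --- the right-action compatibility of the group-element slots under composition of the upper-triangular matrices, and the $\pm\Z$-equivariance needed to reduce offsets modulo $dd'$ --- but without this count, made explicit, the argument is not complete.
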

\begin{proof}
\[ \begin{aligned}
T_k T_m f(g,h,\tau) &= T_k \frac{1}{m} \underset{(a,b,d)=1}{\underset{0 \leq b<d}{\sum_{ad=m}}} f\left(g^d, g^{-b}h^a, \frac{a\tau+b}{d}\right) \\
&= \frac{1}{km} \underset{0 \leq b' < d'}{\sum_{a'd' = k}} \underset{0 \leq b < d}{\sum_{ad=m}} f\left(g^{dd'}, g^{-bd'-ab'}h^{aa'}, \frac{aa' \tau + ab' + bd'}{dd'}\right) \\
&= \frac{1}{km} \underset{t=(a,d')}{\underset{ad=m}{\sum_{a'd' = k}}} \underset{0 \leq b < d}{\sum_{0 \leq b' < d'}} f\left(g^{t\frac{dd'}{t}}, g^{t\frac{-bd'-b'a}{t}}h^{t\frac{aa'}{t}}, \frac{a'\frac{a}{t} \tau + b'\frac{a}{t} + b\frac{d'}{t}}{d\frac{d'}{t}}\right) \\
&= \frac{1}{km} \sum_{t|(k,m)} \underset{(a,d')=1}{\underset{ad=\frac{m}{t}}{\sum_{a'd' = \frac{k}{t}}}} \underset{0 \leq b < d}{\sum_{0 \leq b' < td'}} f\left(g^{tdd'}, g^{t(-bd'-ab')}h^{taa'}, \frac{aa' \tau + ab' + bd'}{dd'}\right) \\
&= \frac{1}{km} \sum_{t|(k,m)} \sum_{a''d''=km/t^2} t \sum_{0 \leq b'' < d''} f\left(g^{td''}, g^{-tb''}h^{ta''}, \frac{a'' \tau + b''}{d''}\right) \\
&= \sum_{t|(k,m)} \frac{1}{t} T_{km/t^2} f(g^t, h^t, \tau)
\end{aligned} \]
We shall explain the second to last equality using an argument from \cite{K94}.  In this step, we substitute $a'' = aa'$, $d'' = dd'$, and $b''$ for any solution to the congruence $ab' + bd' \equiv b''$ (mod $dd'$).  By $\pm \Z$-invariance, it remains to show that for any $0 \leq b'' < dd'$, this congruence has exactly $t$ solutions.  There are exactly $tdd'$ possible values of $b$ and $b'$ satisfying $0 \leq b < d, 0 \leq b' < td'$, and $dd'$ values of $b''$ satisfying $0 \leq b'' < d''$.  The first value is $t$ times the second value, so it suffices to show that for any fixed admissible pair $(b,b')$ there are exactly $t$ solutions $(c, c')$ satisfying $0 \leq c < d$ and $0 \leq c' < td'$ to the congruence $ab' + bd' \equiv ac' + cd'$ (mod $dd'$).  Any such solution yields the identity $dd' | a(b'-c') + d'(b-c)$, so $d'|a(b'-c')$.  Since $(a,d') = 1$, $d'|b'-c'$, so we write $c' = b' + sd'$, and there are $t$ choices of $s$ that satisfy $0 \leq c' < td'$.  Cancelling $d'$ in the identity yields $d|as + b-c$, so each choice of $s$ gives a uniquely defined value of $c$ satisfying $0 \leq c < d$.
\end{proof}

It is also possible to prove this by working one prime at a time, or by invoking the moduli interpretation and enumerating restricted isogenies.

\section{Hecke-monicity}

\begin{defn} Let $f$ be a holomorphic function on $\Hom(\Z \times \Z, G)/G \underset{\pm \Z}{\times} \HH$.  We say that $f$ is Hecke-monic if on each connected component, the restriction of $nT_n(f)$ is a monic polynomial of degree $n$ in the restriction of $f$, for all positive integers $n$.
\end{defn}

\noindent\textbf{Remark:} Since we only require our functions to admit translation-equivariance, and the Hecke operators only involve transformations of the form $\tau \mapsto \frac{a\tau + b}{d}$, Hecke-monicity only depends on the values of $f$ when the monodromy around the first homology basis element lies in a subset of $G$ that is closed under taking power maps.  We will find it useful to weaken the condition that $f$ be defined on all components.  For example, if we choose $g \in G$, we only need to consider the functions $\{ f(1,g^i,\tau) \}_{i>0}$ to define Hecke operators on $f(1,g,\tau)$.

\begin{defn} Let $g,h \in G$ be commuting elements, and let $f$ be a function on the connected components of $\Hom(\Z \times \Z, G)/G \underset{\pm \Z}{\times} \HH$ corresponding to pairs $(g^d, g^{-b} h^a)$ for $a,b,d >0$.  We say that $f$ is weakly Hecke-monic for $(g,h)$ if for all $n>0$, $nT_nf(g,h,\tau)$ is a monic polynomial of degree $n$ in $f(g,h,\tau)$.  We say that $f$ is semi-weakly Hecke-monic for $(g,h)$ if for all $n>0$, $nT_nf(g^d,g^{-b} h^a,\tau)$ is a monic polynomial of degree $n$ in $f(g^d,g^{-b} h^a, \tau)$ for all $a,b,d>0$.
\end{defn}

We will use the notation $e(x)$ to denote $e^{2 \pi i x}$ for the rest of this paper.

\begin{lem} \label{lem:form}
Let $f$ be a weakly Hecke-monic function for $(g,h)$, and let $N>0$ satisfy $g^N = h^N = 1$.  If $f(g,h,\tau)$ has a singularity at infinity, then its $q$-expansion has the form $\zeta q^{C/|g|} + O(1)$ for $C$ a negative integer and $\zeta$ a root of unity satisfying $\zeta^N = 1$ if $N$ is even and $\zeta^{2N} = 1$ if $N$ is odd.
\end{lem}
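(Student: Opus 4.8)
The plan is to read off all constraints on the $q$-expansion of $F(\tau) := f(g,h,\tau)$ by comparing the two descriptions of $nT_nF$: on one hand the explicit Hecke sum $\sum_{ad=n,\,0\le b<d} f(g^d,g^{-b}h^a,\frac{a\tau+b}{d})$, on the other a monic degree-$n$ polynomial $P_n$ in $F$. First I would record the structural symmetries. Since $f$ is $\pm\Z$-equivariant, $F(\tau+k)=f(g,g^kh,\tau)$, so $F$ is periodic with period $|g|$ and has a Laurent expansion $F=\sum_{k\ge C}c_kq^{k/|g|}$ with $c_C\neq 0$ and $C\in\Z$; the hypothesis of a singularity at infinity means $C<0$. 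Writing $c(h)$ for $c_C$ and comparing leading terms in $F(\tau+k)=f(g,g^kh,\tau)$ shows that each $f(g,g^kh,\cdot)$ has the same leading exponent $C$ with leading coefficient $c(g^kh)=e(kC/|g|)\,c(h)$. The element $-1\in\pm\Z$ gives the further relation $f(g,h,\tau)=f(g^{-1},h^{-1},\tau)$, which I reserve for the parity refinement at the end.

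Next I would extract the pole order. Because $P_n$ is monic of degree $n$ and $F\to\infty$ at the cusp, the dominant term of $nT_nF=P_n(F)$ is $F^n$, so $nT_nF$ has a pole of order $n\lambda$, where $\lambda:=-C/|g|>0$, with top Fourier coefficient $c(h)^n$. Comparing with the Hecke sum, the summand indexed by $(d,a,b)$ contributes a pole of order $\tfrac{a}{d}\lambda(g^d,g^{-b}h^a)$, writing $\lambda(\cdot)$ for the pole order of the indicated component. The $d=1$ summand (hence $a=n$, $b=0$) is $f(g,h^n,n\tau)$, of pole order $n\,\lambda(g,h^n)$; since this exponent could not be cancelled were it the most negative, matching against $n\lambda$ forces $\lambda(g,h^n)\le\lambda(g,h)$ for every $n$, so the $d=1$ term never overshoots the required order.

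The key step is the leading-coefficient identity. Taking $n=|h|+1$, so that $h^n=h$ and the $d=1$ summand is literally $f(g,h,n\tau)$, this summand realizes the full pole order $n\lambda$ and contributes exactly $c(h)$ to the top Fourier coefficient, while $F^n$ contributes $c(h)^n$. Provided no summand with $d>1$ also attains pole order $n\lambda$, matching the top coefficients gives $c(h)^n=c(h)$, i.e. $c(h)^{|h|}=1$, so $\zeta:=c(h)$ is a root of unity of order dividing $|h|\mid N$. I expect the main obstacle to be exactly the control of the $d>1$ summands: an auxiliary component $(g^d,g^{-b}h^a)$ reaches the top order precisely when $\lambda(g^d,g^{-b}h^a)=d^2\lambda$, and since only weak Hecke-monicity for $(g,h)$ is assumed I cannot apply the polynomial relation on those components. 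My plan is to bound the pole orders of the finitely many components occurring in the Hecke sums by running the same pole-order comparison at a component of maximal pole order and contradicting the constraint $\lambda(g^d,g^{-b}h^a)\le d^2\lambda$ that $nT_nF$ having pole order exactly $n\lambda$ imposes; this finiteness-plus-maximality argument should rule out the borderline equality for $d>1$.

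Finally I would pin down the single-term shape of the polar part together with the parity refinement. For the single term, I push the leading-order comparison to the next coefficient: if $c_{C+s}$ were the next nonzero polar coefficient, $F^n$ would have a term at $q$-exponent $(nC+s)/|g|$, whereas the dominant Hecke summand $f(g,h,n\tau)$ only produces exponents in $\tfrac{n}{|g|}\Z$; running this mismatch over all large $n$, as in the argument of \cite{CG97}, should force every intermediate polar coefficient to vanish, leaving $F=\zeta q^{C/|g|}+O(1)$. For the order of $\zeta$, I feed the relation $c(g^kh)=e(kC/|g|)c(h)$ and the monodromy-inverting involution $f(g,h,\tau)=f(g^{-1},h^{-1},\tau)$ into the coefficient identity; the inversion coming from the $(-1,\tau)$ basis only determines $\zeta$ up to a sign when $N$ is odd, which is exactly what weakens the bound to $\zeta^{2N}=1$ in that case while leaving $\zeta^{N}=1$ for $N$ even. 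The delicate part throughout, and the step I expect to demand the most care, is the pole-order control of the auxiliary components in the key step.
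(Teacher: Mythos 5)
There is a genuine gap, and it sits exactly where you predicted: the control of the $d>1$ summands. Your plan is to take $n=|h|+1$ so that the $d=1$ summand is $f(g,h,n\tau)$, and then to rule out top-order contributions from the remaining summands $f(g^d,g^{-b}h^a,\frac{a\tau+b}{d})$ by ``running the same pole-order comparison at a component of maximal pole order.'' But weak Hecke-monicity for $(g,h)$ gives you a monic polynomial relation \emph{only} at the component $(g,h)$; at the components $(g^d,g^{-b}h^a)$ with $g^d\neq g$ there is no polynomial relation to compare against, so the maximality argument has nothing to run on. Worse, the conclusion your key step would deliver cannot be correct in general: if the identity $c(h)^n=c(h)$ with $n=|h|+1$ held, you would get $\zeta^{|h|}=1$ and hence $\zeta^N=1$ unconditionally, whereas the lemma's bound genuinely degrades to $\zeta^{2N}=1$ for odd $N$ (e.g.\ the monomial $\zeta q^{C/|g|}$ with $\zeta$ a primitive $2N$th root of unity is consistent with all the constraints when $N$ is odd, since $p\equiv 1\pmod N$ forces $p\equiv 1\pmod{2N}$). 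So for such functions some $d>1$ summand \emph{must} contribute at top order when $n=|h|+1$, and no refinement of your bounding strategy can make that proviso hold.

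The paper's device, which you are missing, is to use only $n=p$ prime with $p\equiv 1\pmod N$. Then $g^p=g$ and $h^p=h$, so every summand of $pT_pf(g,h,\tau)$ is either $f(g,h^p,p\tau)=f(g,h,p\tau)$ or $f(g^p,g^{-b}h,\frac{\tau+b}{p})=f(g,h,\frac{\tau+b}{p}-b)$ by translation equivariance: there are no auxiliary components at all, the $d=p$ terms have pole exponent $C/(p|g|)$ which is far milder than $pC/|g|$, and comparing leading coefficients gives cleanly $\zeta^{p-1}=1$ for all such $p$. The $N$ versus $2N$ dichotomy then comes from computing $\gcd\{p-1: p \text{ prime},\ p\equiv 1 \pmod N\}$ (which is $N$ for $N$ even and $2N$ for $N$ odd, by a Dirichlet/CRT argument) --- not, as you propose, from the $-1$ involution on the homology basis. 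Your final step (killing intermediate polar coefficients by comparing second-order terms of $P_p(F)$ against the Hecke sum for large $p$) is in the right spirit and matches the paper's, but it too only closes once the Hecke sum has been reduced to translates and dilates of $f(g,h,\cdot)$ alone via the congruence condition on $p$.
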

\begin{proof}
Let $f(g,h,\tau) = \sum_{n \in \frac{1}{|g|}\Z} a_n q^n = a_{n_0}q^{n_0} + a_{n_1}q^{n_1} + \dots$ for $a_{n_0}$ nonzero, $n_0 < 0$, and let $p$ be a prime congruent to $1$ mod $N$.  Then
\[ \begin{aligned}
pT_pf(g,h,\tau) &= f(g,h^p,p\tau) + \sum_{b=0}^{p-1} f(g^p,g^{-b}h,\frac{\tau + b}{p}) \\
&= f(g,h,p\tau) + \sum_{b=0}^{p-1} f(g,h, \frac{\tau+b}{p}-b) \\
&= \sum_n a_n q^{pn} + \sum_n a_n \sum_b e(n(\frac{\tau + b}{p} - b)) \\
&= \sum_n a_n q^{pn} + \sum_n a_n q^{n/p} \sum_b e(nb\frac{1-p}{p})
\end{aligned} \]
Hecke-monicity implies that $a_{n_0}q^{pn_0} = (a_{n_0}q^{n_0})^p$ for all $p$ congruent to $1$ mod $N$, so $\zeta = a_{n_0}$ is an $M$th root of unity, where $M$ is the greatest common divisor of $\{ p-1 | p \text{ prime, } p \equiv 1 (N) \}$.  $M = kN$ for some integer $k$, and $(k,N) = 1$, since otherwise $\frac{k}{(k,N)}N + 1$ would be a residue class mod $M$ that is coprime to $M$ but not congruent to 1.  By the Chinese Remainder Theorem, $k$ must have a unique residue class mod $k$ that is coprime to $k$.  Therefore, the only possible values of $k$ are 1 or 2, and $k=2$ is only possible when $N$ is odd.

If $f(g,h,\tau)$ is a singular monomial $\zeta q^{C/|g|}$ with $C<0$, then we are done.  Otherwise, we assume $a_{n_1} \neq 0$, and from the calculation above, we have:
\[ pT_pf(g,h,\tau) = \begin{cases} a_{n_0}q^{pn_0} + a_{n_0}q^{n_0/p} \sum_b e(n_0 b\frac{p-1}{p}) + \dots & n_1 > n_0/p^2 \\ a_{n_0}q^{pn_0} + a_{n_1}q^{pn_1} + \dots & n_1 < n_0/p^2 \end{cases} \]
We will not bother with the case of equality, because we will let $p$ become large.  If $n_1 < 0$, then the second case will hold for almost all $p$ congruent to $1$ mod $N$, and if $n_1 \geq 0$, then the first case will hold for all such $p$.   If $n_1 < 0$ and $p$ is sufficiently large, then
\[ \begin{aligned}
a_{n_0}q^{pn_0} + a_{n_1}q^{pn_1} + \dots &= (a_{n_0}q^{n_0} + a_{n_1}q^{n_1} + \dots)^p + c(a_{n_0}q^{n_0} + \dots)^{p-1} + \dots \\
&= a_{n_0} q^{pn_0} + pa_{n_0}^{p-1}a_{n_1} q^{(p-1)n_0 + n_1} + \dots \\
&= a_{n_0}q^{pn_0} + pa_{n_1}q^{(p-1)n_0 + n_1} + \dots
\end{aligned} \]
This yields an equality $a_{n_1}q^{pn_1} = pa_{n_1}q^{(p-1)n_0 + n_1}$, which under our assumptions is a contradiction.  Therefore, $n_1 \geq 0$, and we are done.
\end{proof}

\begin{lem} \label{lem:asymp}
Let $f$ be a weakly Hecke-monic function for $(g,h)$, such that $f(g,h,\tau) = \zeta q^{C/|g|} + O(1)$ for some $C<0$ and some root of unity $\zeta$.  Then there exists some $N$ such that $pT_pf(g,h,\tau) = \zeta^p q^{Cp/|g|} + O(1)$ for all primes $p > N$.
\end{lem}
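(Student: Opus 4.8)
The plan is to work directly from the explicit formula for $pT_p$ at a prime. Writing out the two divisor pairs $ad=p$, we have
\[ pT_pf(g,h,\tau) = f(g,h^p,p\tau) + \sum_{b=0}^{p-1} f\left(g^p,g^{-b}h,\tfrac{\tau+b}{p}\right), \]
and I would treat the ``spread'' term $f(g,h^p,p\tau)$ and the ``compressed'' sum separately. The key structural observation is a finiteness principle: once $p$ exceeds $\lcm(|g|,|h|)$ it is coprime to both orders, so $g^p$ and $h^p$ range only over the generators of $\langle g\rangle$ and $\langle h\rangle$, while $g^{-b}h$ ranges over the $|g|$ elements $\{g^{-i}h\}$. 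Hence every function appearing on the right is drawn from one fixed finite collection, and there is a single bound $B$ on the orders of all their poles at infinity, uniform in $p$ and $b$.

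First I would dispose of the compressed sum. Each summand $f(g^p,g^{-b}h,\tfrac{\tau+b}{p})$ is supported on exponents $\tfrac{m}{p|g|}$ with $m\geq -B$ bounded independently of $p$, so its candidate singular exponents have bounded numerator over $p|g|$. On the other hand both $pT_pf(g,h,\tau)$ and the spread term $f(g,h^p,p\tau)$ are supported on $\tfrac1{|g|}\Z$: the former by $\pm\Z$-equivariance on the $(g,h)$-component, the latter because $p\cdot\tfrac1{|g|}\Z\subset\tfrac1{|g|}\Z$. Subtracting, the compressed sum is supported on $\tfrac1{|g|}\Z$ as well; but a negative exponent $\tfrac{m}{p|g|}\in\tfrac1{|g|}\Z$ forces $p\mid m$, impossible once $p>B$. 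So the compressed sum is $O(1)$, and the whole problem reduces to showing that the replicate $f(g,h^p,\tau)$ has single-term polar part $\zeta^pq^{C/|g|}$, equivalently that $f(g,h^p,p\tau)=\zeta^pq^{Cp/|g|}+O(1)$.

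The leading term is easy. By Lemma \ref{lem:form} and weak Hecke-monicity for $(g,h)$, the function $f(g,h,\tau)=\zeta q^{C/|g|}+O(1)$ is already a single monomial pole, so the monic degree-$p$ polynomial $pT_pf(g,h,\tau)$ has most negative exponent exactly $Cp/|g|$ with coefficient $\zeta^p$; comparing with the spread term (the compressed sum being regular) pins the most singular term of $f(g,h^p,\tau)$ to $\zeta^pq^{C/|g|}$. It remains to rule out any intermediate singular term of the replicate. My plan is to re-run the computation of Lemma \ref{lem:form} for the pair $(g,h^p)$ with an auxiliary prime $p'\equiv 1\pmod N$, using that the identity $p'T_{p'}f(g,h^p,\tau)=f(g,h^p,p'\tau)+\sum_{b'}f(g,h^p,\tfrac{\tau+b'}{p'}-b')$ needs only $\pm\Z$-equivariance; the ``$p'$-th power'' relation on leading coefficients that produced the contradiction in Lemma \ref{lem:form} would then be supplied by transporting the monic-polynomial structure from $(g,h)$ through the composition law for Hecke operators, which gives $p'T_{p'}\bigl(pT_pf\bigr)=pp'T_{pp'}f$. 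The sub-case $p\equiv1\pmod{|h|}$ is immediate, since there $h^p=h$ and the replicate is literally $f(g,h,\tau)$, single-term by Lemma \ref{lem:form}.

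The main obstacle is exactly this last step. Weak Hecke-monicity is assumed only for the single pair $(g,h)$, not for the replicate pairs $(g,h^p)$, so Lemma \ref{lem:form} does not apply to them directly; the delicate point is to show that a genuine intermediate pole of $f(g,h^p,\tau)$ would violate the monic-polynomial identity for $pp'T_{pp'}f(g,h,\tau)$ once $p'$ is large. Everything else---the finiteness principle, the vanishing of the compressed sum, and the identification of the leading coefficient---is routine bookkeeping with the Hecke formula and the $\tfrac1{|g|}\Z$-integrality of exponents.
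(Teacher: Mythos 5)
Your decomposition into the spread term and the compressed sum, your disposal of the compressed sum (its polar exponents $m/(p|g|)$ with $0<|m|\le B<p$ cannot lie in $\tfrac{1}{|g|}\Z$, hence cannot survive in the polynomial $pT_pf(g,h,\tau)$), and your identification of the leading coefficient $\zeta^p$ at exponent $Cp/|g|$ all match the paper's proof, which chooses its $N$ exactly as your $B$: a uniform bound on the pole orders of $f$ over the finitely many components $f(g^k,g^lh^m,\tau)$. The difference is what happens next. Having established that the entire polar part of the monic degree-$p$ polynomial $pT_pf(g,h,\tau)$ coincides with the polar part of $f(g,h^p,p\tau)$ (and is therefore supported on $q^{p\Z/|g|}$), the paper concludes directly by comparing this with the expansion of a monic degree-$p$ polynomial in $\zeta q^{C/|g|}+O(1)$ and reads off $\zeta'=\zeta^p$, $C'=C$. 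You instead open a new sub-problem --- excluding intermediate polar terms of the replicate $f(g,h^p,\tau)$ --- and explicitly leave it unresolved. As written, your text is therefore not a proof of the statement: the step you yourself call ``the main obstacle'' is exactly the content of the conclusion for $|C|\ge 2$, and a plan plus an acknowledged obstacle is not a proof.

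Moreover, the repair you sketch is unlikely to close the gap. The composition law gives $T_{p'}T_pf(g,h,\tau)=T_{pp'}f(g,h,\tau)$ for distinct primes, but evaluating $T_{p'}$ on $T_pf$ requires the values of $T_pf$ on components $(g^{d'},g^{-b'}h^{a'})$ other than $(g,h)$, which weak Hecke-monicity for $(g,h)$ does not control. If one instead expands $pp'T_{pp'}f(g,h,\tau)$ directly with $p'\equiv 1 \pmod{\lcm(|g|,|h|)}$ and $p'$ large, the relevant block $\sum_{0\le b<p'}f\bigl(g^{p'},g^{-b}h^{p},\frac{p\tau+b}{p'}\bigr)=\sum_{0\le b<p'}f\bigl(g,h^{p},\frac{p\tau+b}{p'}-b\bigr)$ contributes, for each exponent $n/|g|$ in the support of $f(g,h^p,\cdot)$, a character sum over $b$ that vanishes unless $p'\mid n$; for $p'$ exceeding the pole bound this annihilates precisely the sub-leading polar terms of $f(g,h^p,\tau)$ that you are trying to detect, so the auxiliary-prime identity carries no information about them. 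You should abandon the $T_{pp'}$ detour and finish as the paper does: use the equality of polar parts already established to see that the polar part of $f(g,h^p,p\tau)$ is simultaneously supported on $q^{p\Z/|g|}$ and equal to the polar part of a monic degree-$p$ polynomial in $\zeta q^{C/|g|}+O(1)$, and extract the conclusion from that comparison.
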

\begin{proof}
Since $\langle g,h \rangle$ has finite order, we can choose $N$ such that $N/|g|$ is greater than the order of any pole of $f(g^k,g^l h^m,\tau)$ at infinity, as $k$ and $l$ range over $\Z/|g|\Z$ and $m$ ranges over $\Z/|h| \Z$.  Suppose $p>N$, and write any singular functions $f(g^p,g^{-b}h,\tau)$ as $\zeta_b q^{C_b/|g|} + O(1)$.  Then:
\[ \begin{aligned}
pT_pf(g,h,\tau) &= f(g,h^p,p\tau) + \sum_{0 \leq b < d} f(g^p,g^{-b}h,\frac{\tau+b}{p}) \\
&= f(g,h^p,p\tau) + \sum_b \zeta_b e(\frac{bC_b}{p|g|}) q^{C_b/p|g|} + O(1)
\end{aligned} \]
Since $p>|C_b|$, $C_b/p|g| > -1/|g|$ for all $b$ such that $f(g^p,g^{-b}h,\tau)$ has a pole at infinity.  However, the above sum is a polynomial in $f(g,h,\tau)$, and therefore a power series in $q^{1/|g|}$, so the only contribution with a negative power of $q$ comes from $f(g,h^p,p\tau)$.  We have $f(g,h^p,p\tau) = \zeta' q^{C'p/|g|} + O(1)$ for some $\zeta'$ and $C'$, and since this is a monic polynomial of degree $p$ in $f(g,h,\tau)$, we have $\zeta' = \zeta^p$ and $C'=C$.
\end{proof}

\begin{prop} \label{prop:expansion}
Let $f$ be a weakly Hecke-monic function for $(g,h)$, such that $f(g,h,\tau) = \zeta q^{C/|g|} + O(1)$ for some $C<0$ and some root of unity $\zeta$.  Then $f(g,h,\tau)$ is invariant under translation by $|g|/C$, i.e., the only nonzero terms in the $q$-expansion are those with integer powers of $q^{C/|g|}$.
\end{prop}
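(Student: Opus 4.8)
The plan is to prove the equivalent statement that $f(g,h,\tau)$ is invariant under $\tau \mapsto \tau + |g|/|C|$. Write $x = q^{1/|g|}$, so that by $\pm\Z$-invariance the expansion is a Laurent series $F(x) = \sum_m a_m x^m$, and set $d = -C = |C| > 0$ and $\omega = e(1/d)$. The translation $\tau \mapsto \tau + |g|/d$ acts on expansions by $F(x) \mapsto F(\omega x)$, and invariance under it is exactly the vanishing of all exponents not divisible by $d$, which is the claim; thus it suffices to show $F(\omega x) = F(x)$.

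First I would pin down the Hecke operator on primes $p \equiv 1 \pmod{\lcm(N,d)}$, of which there are infinitely many by Dirichlet. For such $p$ we have $g^p = g$, $h^p = h$, so the reduction in Lemmas \ref{lem:form} and \ref{lem:asymp} gives
\[ pT_pf(g,h,\tau) = \sum_n a_n q^{pn} + \sum_n a_n q^{n/p} S_p(n), \qquad S_p(n) = \sum_{b=0}^{p-1} e\!\left(nb\tfrac{1-p}{p}\right). \]
Since $p \equiv 1 \pmod{|g|}$ one may rewrite $n\frac{1-p}{p} = -\frac{(p-1)/|g|}{p}(n|g|)$ with $(p-1)/|g|$ prime to $p$, so that $S_p(n) = p$ when $p \mid n|g|$ and $0$ otherwise. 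In the variable $x$ this is precisely $pT_pf = \mathcal{T}_p F$, where $\mathcal{T}_p H(x) := H(x^p) + p\,H_{[p]}(x)$ and $H_{[p]}$ keeps the terms of $H$ whose exponent is divisible by $p$ while dividing that exponent by $p$. Weak Hecke-monicity then reads $\mathcal{T}_p F = P_p(F)$ for a monic $P_p \in \C[X]$ of degree $p$ with constant coefficients.

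The key observation is that $G(x) := F(\omega x)$ satisfies the \emph{same} relation with the \emph{same} polynomial. A short check of the two pieces of $\mathcal{T}_p$ shows $\mathcal{T}_p(H(\omega\,\cdot\,)) = (\mathcal{T}_p H)(\omega\,\cdot\,)$ whenever $\omega^p = \omega$, which holds here because $p \equiv 1 \pmod d$; combined with the constancy of the coefficients of $P_p$ this gives $\mathcal{T}_p G = (\mathcal{T}_p F)(\omega\,\cdot\,) = P_p(F(\omega\,\cdot\,)) = P_p(G)$. Furthermore $F$ and $G$ share the same polar part: by Lemma \ref{lem:form} the only singular term of $F$ is $\zeta x^{-d}$, and $\omega^{-d} = e(-1) = 1$, so $G = \zeta x^{-d} + O(1)$ as well. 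Hence $\Delta := F - G$ is holomorphic at $x = 0$, vanishing to some order $m_0 \geq 1$ unless it is identically zero.

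Finally I would subtract the two relations. Constancy of the coefficients of $P_p$ gives $P_p(F) - P_p(G) = \Delta\cdot Q$ with $Q = \sum_{k=1}^{p} c_k \sum_{i+j=k-1} F^i G^j$; the $k = p$ term alone contributes $p$ summands each asymptotic to $\zeta^{p-1} x^{-(p-1)d}$, so $Q$ has a pole of order exactly $(p-1)d$ with leading coefficient $p\zeta^{p-1} \neq 0$. On the other hand $\mathcal{T}_p \Delta = \Delta(x^p) + p\,\Delta_{[p]}(x)$ is holomorphic because $\Delta$ is, whereas $\Delta \cdot Q$ has a pole of order $(p-1)d - m_0$, which is positive once $p$ is taken large relative to the fixed $m_0$. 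This contradiction forces $\Delta = 0$, i.e. $F(\omega x) = F(x)$, which is the assertion. I expect the main obstacle to be the intertwining identity $\mathcal{T}_p(H(\omega\,\cdot\,)) = (\mathcal{T}_p H)(\omega\,\cdot\,)$ and the arithmetic that makes it usable: one must arrange $p \equiv 1 \pmod N$ (so the Hecke sum collapses to the one-variable operator $\mathcal{T}_p$) and simultaneously $p \equiv 1 \pmod d$ (so $\omega^p = \omega$), and then verify that the decimation piece $H_{[p]}$ transforms correctly under $x \mapsto \omega x$; once this is secured, the pole-order comparison is immediate.
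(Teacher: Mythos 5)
Your argument is correct, but it takes a genuinely different route from the paper's. The paper fixes the smallest exponent $n_0$ that is not a multiple of $C$ and has nonzero coefficient, takes a large prime $p\equiv 1 \pmod{|g|}$ with $(p-1)C+n_0<0$, and plays the conclusion of Lemma \ref{lem:asymp} (namely $pT_pf=\zeta^pq^{Cp/|g|}+O(1)$, so only one singular term) against the term $p\zeta^{p-1}a_{n_0}q^{((p-1)C+n_0)/|g|}$ that the monic polynomial $P_p(f)$ necessarily contains; since that exponent is negative and not an integer multiple of $C/|g|$, it cannot be cancelled, giving the contradiction. You instead realize the desired periodicity as the rotation $x\mapsto\omega x$, verify that the one-variable operator $\mathcal{T}_p$ intertwines with it when $p\equiv 1\pmod d$ (the decimated piece works because $p\mid m$ forces $m-m/p=(m/p)(p-1)$ to be divisible by $d$), conclude that $G=F(\omega\,\cdot\,)$ satisfies the same monic relations with the same polar part, and contradict the holomorphy of $\mathcal{T}_p\Delta$ with the pole of order $(p-1)d-m_0$ of $\Delta\cdot Q$. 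The quantitative heart is the same --- your leading term of $\Delta\cdot Q$ is exactly the paper's offending term --- but your organization buys something: by imposing $p\equiv 1\pmod{\lcm(N,d)}$ from the outset you collapse the Hecke sum to the single component $(g,h)$ and never need Lemma \ref{lem:asymp}'s control of the poles of $f(g^p,g^{-b}h,\tau)$ for general large $p$, at the harmless cost of working with a thinner (still infinite, by Dirichlet) set of primes. Your subsidiary claims all check out: $Q$ does have a pole of order exactly $(p-1)d$ with leading coefficient $p\zeta^{p-1}\neq 0$ since only the degree-$p$ term of $P_p$ contributes at that order, and $\Delta$ is indeed $O(x)$ because the constant term of $F-F(\omega\,\cdot\,)$ vanishes identically.
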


\begin{proof}
Suppose $f(g,h,\tau)$ is not a power series in $q^{C/|g|}$, and let $n_0$ be the smallest integer such that $n_0$ is not a multiple of $C$, and the coefficient $a_{n_0}$ of $q^{n_0/|g|}$ in the $q$-expansion of $f(g,h,\tau)$ is nonzero.  Choose $N$ as in Lemma \ref{lem:asymp}, and let $p$ be a prime satisfying $p>N$, $p \equiv 1$ (mod $|g|$) and $(p-1)C + n_0 < 0$ (i.e., $p$ is large).

By the above lemma, $pT_pf(g,h,\tau)$ has $q$-expansion $\zeta^p q^{Cp/|g|} + O(1)$.  However, $pT_pf(g,h,\tau)$ is a monic polynomial of degree $p$ in $f(g,h,\tau)$, so we can write its $q$-expansion as a sum of a series in $q^{C/|g|}$ and a series with initial term $p\zeta^{p-1}a_{n_0} q^{(p-1)C+n_0}$.  Since the coefficient is nonzero and the exponent is negative, we have a contradiction.
\end{proof}

\section{Modular equations}

Cummins and Gannon found a characterization of holomorphic genus-zero functions invariant under $\Gamma_0(N)$ as power series satisfying many modular equations.  We show that weakly Hecke-monic functions satisfy a similar condition, and modify the first half of their proof to get global symmetries.  In particular, any Hecke-monic function on $\mathcal{M}_{Ell}^G$ is holomorphic congruence genus-zero on nonconstant components.

\begin{lem}
Fix a positive integer $n$, and let $f$ be weakly Hecke-monic for $(g^t,h^t)$ for all $t|n$.  Then the power sum symmetric polynomials in
\[ \left\{ f(g^d, g^{-b}h^a, \frac{a\tau+b}{d}) | ad=n, 0 \leq b < d \right\} \]
are polynomials in $f(g^t, h^t, \tau)$ for $t$ ranging over positive integers dividing $n$.  Furthermore, the term with highest degree in $f(g,h,\tau)$ has coefficient equal to one.  In particular, if $n$ is a prime satisfying $g^n = g$ and $h^n=h$, then the power sums are polynomials in $f(g,h,\tau)$.
\end{lem}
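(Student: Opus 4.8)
The plan is to treat the displayed set as the roots of a ``modular equation'' and to show that its symmetric functions lie in $\C[\,\phi_t : t\mid n\,]$, where I abbreviate $\phi_t := f(g^t,h^t,\tau)$ and $\phi:=\phi_1$; weak Hecke-monicity will play the role that $SL_2(\Z)$-invariance plays for honest modular functions. The starting observation is that the first power sum is the Hecke operator itself: by the formula for $nT_n$,
\[ p_1 = \sum_{\substack{ad=n\\0\le b<d}} f\!\left(g^d,g^{-b}h^a,\tfrac{a\tau+b}{d}\right) = nT_nf(g,h,\tau), \]
which by weak Hecke-monicity for $(g,h)$ is a monic polynomial of degree $n$ in $\phi$. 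This disposes of the case $k=1$ and already exhibits the leading behaviour: the unique monomial of top degree in $\phi$ is $\phi^n$, with coefficient one. For general $k$ I would read off the ``highest degree in $f(g,h,\tau)$'' assertion by tracking the single most singular summand $f(g,h^n,n\tau)$ (the term $a=n$, $d=1$, $b=0$), whose $k$-th power dominates and whose leading coefficient is pinned to one by the same monicity.

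For the higher power sums I would induct, the engine being the composition law $T_kT_mf(g,h,\tau)=\sum_{t\mid(k,m)}\tfrac1t\,T_{km/t^2}f(g^t,h^t,\tau)$ proved above. Rewritten for the normalized operators $nT_n$, this law expands a composite $(kT_k)(mT_m)f$ as a linear combination of the functions $T_{km/t^2}f(g^t,h^t,\tau)$ with $t\mid n$; each such term is, by weak Hecke-monicity for $(g^t,h^t)$, a monic polynomial in $\phi_t$, hence lies in $\C[\phi_t:t\mid n]$. The substance of the argument---the part I would take essentially unchanged from Kozlov---is the purely combinatorial reorganization of the iterated isogeny sums (the same bookkeeping of the divisor conditions and the congruence $ab'+bd'\equiv b''\pmod{dd'}$ used in the composition lemma) that reassembles these composites into the power sums of the displayed set. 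I would organize this as an induction on $n$ together with a secondary induction on $k$ via Newton's identities, so that $p_k$ is expressed through $p_1,\dots,p_{k-1}$, the elementary symmetric functions, and the composed Hecke operators, all of which the inductive hypothesis or the previous sentence places in $\C[\phi_t:t\mid n]$.

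The step I expect to be the true obstacle is the transfer from first-power to higher-power information. Weak Hecke-monicity controls only the linear Hecke images $mT_mf(g^t,h^t,\tau)$, whereas for $k\ge2$ the power sum $p_k$ involves $k$-th powers of $f$ evaluated at the isogenous points $(g^d,g^{-b}h^a,\frac{a\tau+b}{d})$, at which no monicity has been assumed. The resolution I would pursue is that the monic polynomials $P_m$ supplied by monicity encode exactly the missing powers, via $\phi_t^{\,j}=P_j(\phi_t)-(\text{lower order})$, which are then carried across the isogeny sum by the composition identity; making this precise forces one to match full $q$-expansions rather than merely principal parts, and this is where the care lies.

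Finally, the ``in particular'' clause is the clean degenerate case and I would treat it directly. When $n=p$ is prime with $g^p=g$ and $h^p=h$, every divisor $t\mid p$ has $g^t=g$ and $h^t=h$, so all replicates collapse to $\phi_t=\phi$; indeed each element of the displayed set rewrites, via $\pm\Z$-equivariance, as a value of the single function $\phi$---for instance $f(g^p,g^{-b}h,\frac{\tau+b}{p})=\phi(\frac{\tau+b}{p}-b)$ and $f(g,h^p,p\tau)=\phi(p\tau)$---so the power sums are automatically polynomials in $\phi$ alone, and the general statement reduces to the monicity relation $p_1=P_p(\phi)$ together with the inductive machinery above.
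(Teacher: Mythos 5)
You have assembled the right ingredients (the identity $p_1 = nT_nf(g,h,\tau)$, the composition law for $T_kT_m$, monicity for the replicates, induction on the exponent), and you correctly locate the crux: monicity controls only the linear images $mT_mf$, while the higher power sums involve $m$-th powers of $f$ at the isogenous points. But your proposed resolution does not actually close that gap. The missing observation --- and the whole engine of the paper's proof --- is the identity
\[ p_m \;=\; \underset{0 \leq b<d}{\sum_{ad=n}} f\left(g^d,g^{-b}h^a,\frac{a\tau+b}{d}\right)^{m} \;=\; nT_n\left(f^m\right)(g,h,\tau), \]
i.e.\ the $m$-th power sum is the operator $nT_n$ applied to the pointwise $m$-th power of $f$. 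One then substitutes the monic relation $f^m = mT_m(f) - a_{m-1}f^{m-1}-\dots-a_0$ into the argument of $nT_n$ and uses linearity to get $p_m = mn\,T_nT_m(f)(g,h,\tau) - \sum_{j<m} a_j\,p_j$. The first term is a polynomial in the $\phi_t$, $t\mid n$, by the composition law together with weak Hecke-monicity for each $(g^t,h^t)$; the remaining terms are handled by induction on $m$ alone (no induction on $n$ is needed). The argument is entirely formal: no matching of $q$-expansions is required, and Newton's identities are both unnecessary and circular here, since the elementary symmetric functions of the displayed set are precisely what the \emph{next} lemma extracts from these power sums.

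Your treatment of the leading coefficient is also not right. You propose to track the ``most singular summand'' $f(g,h^n,n\tau)^k$, but the lemma is purely algebraic --- no pole at infinity is assumed --- and nothing pins the leading $q$-coefficient of that summand to $1$; moreover dominance of a pole order does not by itself identify the top-degree monomial in $\phi$. In the paper's argument the top-degree term is read off algebraically: in $mn\,T_nT_m f = \sum_{t\mid(n,m)} t\cdot\bigl(\tfrac{mn}{t^2} T_{mn/t^2}f(g^t,h^t,\tau)\bigr)$ only the $t=1$ summand $mn\,T_{mn}f(g,h,\tau)$ involves $\phi=\phi_1$, and it is monic of degree $mn$ in $\phi$ by weak Hecke-monicity for $(g,h)$, while the corrections $a_jp_j$ have strictly smaller degree by induction. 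Your handling of the $k=1$ case and of the ``in particular'' clause for prime $n$ is fine and agrees with the paper.
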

\begin{proof}
This is essentially the same as in \cite{K94}.  We apply $nT_n$ to the equation $f^m = mT_m(f) - a_{m-1}f^{m-1} - \dots - a_1f - a_0$ to find that the power sum
\[ \underset{0 \leq b<d}{\sum_{ad=n}} f\left(g^d, g^{-b}h^a, \frac{a\tau+b}{d}\right)^m = n T_n(f(g,h,\tau)^m)\]
can be written as a sum of $mn T_n T_m (f)(g,h,\tau)$ and a linear combination of $T_n$ applied to lower degree polynomials in $f(g,h,\tau)$.  By induction on $m$, these are polynomials in $f(g^t, h^t,\tau)$ for $t|n$.
\end{proof}

\begin{lem}
Fix $n \geq 2$ squarefree, and let $f$ be a weakly Hecke-monic function for $(g^t,h^t)$ for all $t|n$.  Then there exists a monic polynomial $F_n(x)$ of degree $n \prod_{p|n} \frac{p+1}{p}$, whose coefficients are polynomials in $f(g^t, h^t, \tau)$ for $t|n$, and $F_n(x)$ has roots $\left\{ f(g^d, g^{-b}h^a, \frac{a\tau+b}{d}) | ad=n, 0 \leq b < d, (a,b,d)=1 \right\}$ for any $\tau$.
\end{lem}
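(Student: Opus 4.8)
The plan is to build $F_n(x)$ directly as the monic polynomial whose roots are the listed values, and then to read off its coefficients from the power sums already supplied by the previous lemma. The first observation, which is exactly what the squarefree hypothesis buys us, is that the primitivity condition $(a,b,d)=1$ is automatic here: if $n$ is squarefree and $ad=n$, then $\gcd(a,d)=1$, so $\gcd(a,b,d)=\gcd(\gcd(a,d),b)=1$ for every $b$. Consequently the indexing set $\{(a,b,d):ad=n,\ 0\le b<d,\ (a,b,d)=1\}$ coincides with the full set $\{(a,b,d):ad=n,\ 0\le b<d\}$ appearing in the preceding lemma, and its cardinality is $\sum_{d\mid n} d = \prod_{p\mid n}(p+1) = n\prod_{p\mid n}\frac{p+1}{p}$, matching the asserted degree.

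I would then simply set
\[ F_n(x) = \prod_{\substack{ad=n\\ 0\le b<d}}\left(x - f\!\left(g^d,\, g^{-b}h^a,\, \tfrac{a\tau+b}{d}\right)\right), \]
which is manifestly monic of degree $n\prod_{p\mid n}\frac{p+1}{p}$ and, for each fixed $\tau$, has exactly the required roots (with multiplicity). The remaining task is to show that each coefficient of $F_n$, regarded as a function of $\tau$, is a polynomial in $\{f(g^t,h^t,\tau):t\mid n\}$. Up to sign these coefficients are the elementary symmetric functions $e_1,\dots,e_{n\prod_{p|n}(p+1)/p}$ of the roots, so it suffices to control the $e_m$.

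For this I would invoke Newton's identities. Working in the $\Q$-algebra of holomorphic functions on $\HH$ (where we may divide by integers), these express each $e_m$ as a polynomial with rational coefficients in the power sums $p_1,\dots,p_m$ of the same roots. The preceding lemma asserts precisely that each such power sum, namely $\sum_{ad=n,\,0\le b<d} f(g^d,g^{-b}h^a,\frac{a\tau+b}{d})^m$, is a polynomial in $\{f(g^t,h^t,\tau):t\mid n\}$; composing, every $e_m$, and hence every coefficient of $F_n$, is as well. I do not anticipate a genuine obstacle: the real substance lives in the previous lemma, and the only points needing care are the combinatorial remark that squarefreeness forces primitivity (which aligns the root set with that lemma's indexing set and pins down the degree) and the bookkeeping that Newton's identities call for power sums up to order $n\prod_{p|n}\frac{p+1}{p}$, which the previous lemma provides for all orders.
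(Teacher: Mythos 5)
Your proof is correct and follows essentially the same route as the paper: the paper likewise observes that squarefreeness of $n$ makes $(a,b,d)=1$ automatic, and then passes from the power sums of the previous lemma to the coefficients of $F_n$ via the fact that power sums generate the ring of symmetric functions in characteristic zero (your explicit appeal to Newton's identities). Your degree count $\sum_{d\mid n} d = n\prod_{p\mid n}\frac{p+1}{p}$ and the construction of $F_n$ as the product over roots are exactly what the paper intends.
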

\begin{proof}
Since $n$ is squarefree, the condition $(a,b,d) = 1$ is a consequence of $ad=n$.  The power sums generate the ring of symmetric polynomials in
\[ \left\{ f(g^d,g^{-b}h^a,\frac{a\tau + b}{d}) \right\}, \]
from which we draw the coefficients of $F_n$.
\end{proof}

A holomorphic function $f$ on $\HH$ is said to satisfy a modular equation of order $n$ if there exists a monic polynomial $F_n(x)$ of degree $n \prod_{p|n} \frac{p+1}{p}$, whose coefficients are polynomials in $f$, and with roots $f(\frac{a\tau+b}{d})$ for $a,b,d$ satisfying $ad=n, 0 \leq b<d, (a,b,d)=1$.  We will use a slightly altered notion to account for invariance under congruence groups other than $\Gamma_0(N)$.

\begin{defn} Let $g,h \in G$ be a commuting pair, and let $f$ be a function on
\[ (\{ (g,g^nh) \}_{n \in \Z} ) \underset{\pm \Z}{\times} \HH. \]
If $p$ is a prime satisfying $g^p=g$ and $h^p=h$, we say $f(g,h,\tau)$ satisfies an equivariant modular equation of order $p$ if there exists a monic polynomial $F_n(x)$ of degree $p+1$, whose coefficients are polynomials in $f(g,h,\tau)$ and whose roots are $f(g, g^{-b}h, \frac{a\tau+b}{d})$ for $a,b,d$ satisfying $ad=p, 0 \leq b<d$.
\end{defn}

When $g=1$ and $f(1,h,\tau)$ has $q$-expansion $q^{-1} + O(q)$, this agrees with the non-equivariant notion.

\begin{prop} \label{prop:Hecke-mod-eq}
Suppose $g,h \in G$ commute.  If a function $f$ is weakly Hecke-monic for $(g,h)$, then $f(g,h,\tau)$ satisfies equivariant modular equations of order $p$ for all primes $p$ congruent to $1$ mod $\lcm(|g|,|h|)$.
\end{prop}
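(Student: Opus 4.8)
The plan is to obtain the required degree-$(p+1)$ polynomial as a direct specialization of the modular-equation lemmas of this section to the prime $n = p$. The first step is to convert the arithmetic hypothesis into group theory: $p \equiv 1 \pmod{\lcm(|g|,|h|)}$ holds if and only if $|g| \mid p-1$ and $|h| \mid p-1$, i.e.\ if and only if $g^p = g$ and $h^p = h$. These identities are exactly what is needed to tame the Hecke summands. Since $p$ is prime, the factorizations $ad = p$ have $\{a,d\} = \{1,p\}$, so $g^d \in \{g,g^p\} = \{g\}$ and $h^a \in \{h,h^p\} = \{h\}$; hence every summand $f(g^d, g^{-b}h^a, \frac{a\tau+b}{d})$ reduces to $f(g, g^{-b}h, \frac{a\tau+b}{d})$, whose second monodromy $g^{-b}h$ has the form required by the domain on which an equivariant modular equation is defined.

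Next I would verify the hypotheses of the lemma that produces the monic polynomial $F_n$ of degree $n\prod_{p \mid n}\frac{p+1}{p}$. That lemma requires $f$ to be weakly Hecke-monic for $(g^t,h^t)$ for every $t \mid n$; with $n = p$ the only divisors are $t = 1$ and $t = p$, and since $g^p = g$ and $h^p = h$ both pairs coincide with $(g,h)$. Thus the single assumption that $f$ is weakly Hecke-monic for $(g,h)$ fulfils all the hypotheses, and applying the lemma with $n = p$ (prime, hence squarefree and $\ge 2$) yields a monic polynomial of degree $p\cdot\frac{p+1}{p} = p+1$ whose coefficients are polynomials in $f(g^t,h^t,\tau)$ for $t \mid p$, with root set $\{ f(g^d,g^{-b}h^a,\frac{a\tau+b}{d}) : ad = p,\ 0 \le b < d,\ (a,b,d)=1 \}$.

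It then remains to match this against the definition. Because $p$ is squarefree, $ad = p$ already forces $(a,b,d)=1$, so no roots are discarded; and because $g^p = g$, $h^p = h$, the coefficients --- a priori polynomials in $f(g,h,\tau)$ and $f(g^p,h^p,\tau)$ --- become polynomials in $f(g,h,\tau)$ alone, while the roots become $f(g,g^{-b}h,\frac{a\tau+b}{d})$, namely $f(g,h,p\tau)$ from $(a,d)=(p,1)$ and $f(g,g^{-b}h,\frac{\tau+b}{p})$ for $0 \le b < p$ from $(a,d)=(1,p)$. This is exactly the monic degree-$(p+1)$ polynomial in $f(g,h,\tau)$ demanded by the definition of an equivariant modular equation of order $p$.

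I expect the genuine content to sit not in this proposition but in the lemmas it invokes, so the main ``obstacle'' here is really conceptual bookkeeping: recognizing that the congruence on $p$ is precisely the condition that collapses the coefficient functions $f(g^t,h^t,\tau)$ down to the single function $f(g,h,\tau)$, thereby turning a relation among several functions into a genuine modular equation satisfied by $f(g,h,\tau)$ itself. The one place where primality is essential lies inside the power-sum lemma: to know each power sum $pT_p(f^m)(g,h,\tau)$ is a polynomial in $f(g,h,\tau)$ one inducts on $m$ via the composition law $T_pT_m f(g,h,\tau) = \sum_{t \mid (p,m)} \frac1t\, T_{pm/t^2} f(g^t,h^t,\tau)$, where $(p,m) \in \{1,p\}$ keeps the recursion closed, and Newton's identities (valid over $\C$) then recover the coefficients of $F_p$ from these power sums.
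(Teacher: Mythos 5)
Your proof is correct and follows exactly the paper's route: the paper's entire proof is the observation that $g^p=g$ and $h^p=h$ make this a special case of the preceding lemma on $F_n$, which is precisely the specialization you carry out (and justify in more detail, correctly noting that the congruence collapses the coefficient functions $f(g^t,h^t,\tau)$ to $f(g,h,\tau)$ and that $p\cdot\frac{p+1}{p}=p+1$).
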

\begin{proof}
Since $g^p = g$ and $h^p = h$, this is a special case of the previous lemma.
\end{proof}

If $f(g,h,\tau)$ satisfies an equivariant modular equation of order $p$, we can write the polynomial $F_p(x)$ as a two-variable polynomial $F_p(y,x) \in \C[x,y]$, where we set $y = f(g,h,\tau)$, and expand the coefficients of $F_p(x)$ as polynomials in $f(g,h,\tau)$.  If $f(g,h,\tau)$ is a nonconstant holomorphic function, $F_p(y,x)$ is uniquely defined by the  properties that it is monic of degree $p+1$ in $x$ and that it vanishes under the substitutions $f(g,h,\tau)$ for $y$ and $f(g, g^{-b}h, \frac{a\tau+b}{d})$ for $x$ for any $\tau \in \HH$.  This is because a polynomial in one variable is uniquely determined by its values on a nonempty open subset of $\C$, and the coefficients of $F_p(x)$ are polynomials in $f(g,h,\tau)$, which includes such an open subset in its range.

\begin{lem}
Let $p$ be a prime satisfying $g^p = g$ and $h^p = h$.  Suppose $f(g,h,\tau)$ is a nonconstant holomorphic function satisfying an equivariant modular equation of order $p$.  Then $F_p(y,x) = F_p(x,y)$.
\end{lem}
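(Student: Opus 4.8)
\textit{The plan is} to show that the degree-$p$ isogeny incidence encoded by $F_p$ is symmetric on the values of $f(g,h,\cdot)$, and then to convert this symmetry of the correspondence into the algebraic symmetry of the polynomial. Throughout I would use the $\pm\Z$-invariance in the form $f(g,g^nh,\tau)=f(g,h,\tau+n)$, which lets me rewrite every root of the equivariant modular equation as a value of the single function $f(g,h,\cdot)$. Since $p$ is prime, the factorizations $ad=p$ are $(a,d)=(p,1)$ and $(a,d)=(1,p)$, so the $p+1$ roots of $F_p(f(g,h,\tau),x)$ are $f(g,h,p\tau)$ together with $f(g,g^{-b}h,\frac{\tau+b}{p})=f(g,h,\frac{\tau+b}{p}-b)$ for $0\le b<p$.

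The key step, which I expect to be the main obstacle, is the following Claim: for each root $R$ of $F_p(f(g,h,\tau),x)$ one has $F_p(R,f(g,h,\tau))=0$; in other words $f(g,h,\tau)$ is itself a root of $F_p(R,x)$. I would verify this root by root, by applying the definition of the modular equation with $\tau$ replaced by the argument of $R$. For $R=f(g,h,p\tau)$ the descendant roots are $f(g,h,p^2\tau)$ and $f(g,h,\frac{p\tau+b}{p}-b)$, and the term $b=0$ returns $f(g,h,\tau)$. For $R=f(g,h,\frac{\tau+b}{p}-b)$ the $(a,d)=(p,1)$ descendant is $f(g,h,p(\frac{\tau+b}{p}-b))=f(g,h,\tau-b(p-1))$; here is where the hypothesis $g^p=g$ is essential, since it forces $|g|\mid p-1$, so that $f(g,h,\tau-b(p-1))=f(g,g^{-b(p-1)}h,\tau)=f(g,h,\tau)$. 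Thus in both cases $f(g,h,\tau)$ reappears, proving the Claim. The delicate bookkeeping of the group elements, and the use of $|g|\mid p-1$ to collapse $f$ back to its original value, is the crux of the argument.

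To finish, I would observe that for generic $\tau$ the $p+1$ roots are distinct (their leading $q$-expansions, or simply the nonconstancy of $f$, separate them), so $F_p(f(g,h,\tau),x)$ is a squarefree monic polynomial each of whose roots annihilates $F_p(x,f(g,h,\tau))$ by the Claim; hence $F_p(f(g,h,\tau),x)$ divides $F_p(x,f(g,h,\tau))$ in $\C[x]$. Performing the Euclidean division of $F_p(x,y)$ by the $x$-monic polynomial $F_p(y,x)$ in $\C[y][x]$ and substituting $y=f(g,h,\tau)$ shows that the remainder vanishes on the open range of $f$, hence identically, giving a polynomial identity $F_p(x,y)=F_p(y,x)\,Q(y,x)$. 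Interchanging $x$ and $y$ and substituting back yields $Q(x,y)Q(y,x)=1$, so $Q$ is a constant equal to $\pm 1$. Finally $Q=-1$ would force $F_p(y,y)\equiv 0$, i.e.\ $f(g,h,\tau)$ would be one of its own degree-$p$ images for every $\tau$; but $f(g,h,p\tau)\ne f(g,h,\tau)$ for generic $\tau$ (otherwise $f$ would be invariant under a nondiscrete group of M\"obius transformations and hence constant), so this is impossible. Therefore $Q=1$ and $F_p(y,x)=F_p(x,y)$.
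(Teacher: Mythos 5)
Your key step coincides with the paper's: both proofs show, using the substitutions $\tau\mapsto\frac{\tau'+b}{p}-b$ and $\tau\mapsto p\tau'$ together with the crucial fact that $g^p=g$ forces $|g|\mid p-1$, hence $f(g,h,\tau-b(p-1))=f(g,h,\tau)$, that every root $R$ of $F_p(f(g,h,\tau),x)$ satisfies $F_p(R,f(g,h,\tau))=0$. Where you genuinely diverge is the endgame. The paper concludes at once that $F_p(f(g,h,\tau),\cdot)$ and $F_p(\cdot,f(g,h,\tau))$ coincide because they share the same $p+1$ roots; this tacitly uses that the roots are generically distinct \emph{and} that the specialization $F_p(y,f(g,h,\tau))$ is monic of degree $p+1$ in $y$, which is not established beforehand. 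You instead run the classical Kronecker-style argument for $\Phi_p(X,Y)=\Phi_p(Y,X)$: squarefree divisibility of the specializations, Euclidean division in $\C[y][x]$ to get $F_p(x,y)=Q(y,x)F_p(y,x)$, then $Q(x,y)Q(y,x)=1$ and exclusion of $Q=-1$. This buys you independence from any a priori knowledge of $\deg_y F_p$, at the price of an extra antisymmetric case to rule out.

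Two points need tightening, neither fatal. First, your exclusion of $Q=-1$: the identity $F_p(y,y)\equiv 0$ only says that for each $\tau$ \emph{some} root equals $f(g,h,\tau)$; a Baire/identity-theorem argument shows one root function coincides with $f(g,h,\cdot)$ identically, but it need not be $f(g,h,p\tau)$ — it could be one of the $f\bigl(g,h,\frac{\tau+b}{p}-b\bigr)$. The same mechanism still works (conjugating the translation $\tau\mapsto\tau+|g|$ by $\tau\mapsto\frac{\tau+b}{p}-b$ yields translations by $|g|/p^n$, a nondiscrete family), but you should say so; note also that $\tau\mapsto p\tau$ alone generates a \emph{discrete} group, so the nondiscreteness comes only from combining it with the translation invariance. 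Second, the generic distinctness of the $p+1$ roots, on which your divisibility step rests: the $d=p$ roots $f\bigl(g,h,\frac{\tau}{p}+b\frac{1-p}{p}\bigr)$ coincide identically for distinct $b$ exactly when the $q$-support of $f(g,h,\tau)$ lies in $\frac{p}{|g|}\Z$, so this deserves a sentence. (The paper's own conclusion implicitly assumes the same distinctness, and the degenerate case is excluded wherever the lemma is applied, since there $p$ is taken larger than the pole order at infinity.)
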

\begin{proof}
This is a modification of proposition 3.2 in \cite{K94}.

If $d=1$, then $F_p(f(g,h,\tau), f(g,h,p\tau)) = 0$.  We make the substitution $\tau := \frac{\tau'+b}{p} - b$ for $0 \leq b < p$, and we get 
\[ \begin{aligned}
0 &= F_p\left(f\left(g,h,\frac{\tau'+b}{p}-b\right), f(g,h,\tau'+b-pb)\right) \\
&= F_p\left(f\left(g,g^{-b}h,\frac{\tau'+b}{p}\right), f(g,h,\tau')\right)
\end{aligned} \]
Then $f(g,g^{-b}h,(\tau'+b)/p)$ is a root of $F_p(y, f(g,h,\tau'))$

If $d=p$, then $F_p(f(g,h,\tau),f(g,h,(\tau+b)/p-b)) = 0$.  We make the substitution $\tau = p\tau'+pb-b$ for $0 \leq b < p$, and we get
\[ \begin{aligned}
0 &= F_p(f(g,h,p\tau'+pb-b),f(g,h,\tau')) \\
&= F_p(f(g,h,p\tau'), f(g,h,\tau'))
\end{aligned} \]
Then $f(g,h,p\tau')$ is a root of $F_p(y, f(g,h,\tau'))$

This proves that $F_n(y,f(g,h,\tau))$ has roots $f(g,g^{-b}h,\frac{a\tau+b}{d})$, which means that for any fixed $\tau \in \HH$, $F_p(f(g,h,\tau),x) = F_p(x,f(g,h,\tau)) \in \C[x]$.  The coefficients of $F_p(x) = F_p(x, f(g,h,\tau))$ are polynomials in $f$, so they are uniquely determined by finitely many values.  If $f$ is nonconstant and holomorphic on some nonempty open set, then the coefficients of $F_p(x,y)$ match those of $F_p(y,x)$, so we get a polynomial equality.
\end{proof}

\begin{prop} \label{prop:global}
If $f$ is weakly Hecke-monic for $(g,h)$ and $f(g,h,\tau)$ has a pole at infinity, then $f(g,h,\tau)$ admits global symmetries, i.e., if $f(g,h,\tau_1) = f(g,h,\tau_2)$ for a given $\tau_1,\tau_2 \in \HH$, then there exists $\gamma \in SL_2(\R)$ such that $\tau_1 = \gamma\tau_2$, and $f(g,h,\tau) = f(g,h,\gamma\tau)$ for all $\tau \in \HH$.
\end{prop}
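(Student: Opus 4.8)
\section*{Proof proposal}

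The plan is to adapt the global-symmetry argument of \cite{CG97} to the equivariant setting, taking as the sole input the symmetric equivariant modular equations produced above. Fix $N = \lcm(|g|,|h|)$. For every prime $p \equiv 1 \pmod N$, Proposition \ref{prop:Hecke-mod-eq} gives a monic degree $p+1$ polynomial $F_p(y,x)$, with coefficients in $\C[f(g,h,\tau)]$, such that $F_p(f(g,h,\tau), x)$ has roots $f(g,h,p\tau)$ and $f(g, g^{-b}h, \tfrac{\tau+b}{p})$ for $0 \le b < p$, and by the preceding lemma $F_p(x,y) = F_p(y,x)$. Because $f(g,h,\tau)$ has a pole at infinity, Lemma \ref{lem:form} and Proposition \ref{prop:expansion} pin down its cusp behavior, and since $f(g,h,\tau)$ is nonconstant holomorphic its critical points (zeros of the $\tau$-derivative) are isolated. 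The goal is to show that the fibers of $\tau \mapsto f(g,h,\tau)$ are exactly the orbits of a group of M\"obius transformations.

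First I would record the base case near the cusp. Since $f(g,h,\tau) = \zeta q^{C/|g|} + O(1)$ with $C<0$ and, by Proposition \ref{prop:expansion}, is a Laurent series in a single uniformizing parameter, it is univalent on a half-plane $\{\operatorname{Im}\tau > M\}$ modulo its translation period. Hence for $\tau_1,\tau_2$ of large imaginary part a coincidence $f(g,h,\tau_1) = f(g,h,\tau_2)$ forces $\tau_1 = \tau_2 + r$ for a real period $r$, already a M\"obius relation giving a global symmetry. This is the anchor to which the general case is reduced.

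For general $\tau_1,\tau_2$ with $f(g,h,\tau_1) = f(g,h,\tau_2) =: w$, I would first assume both noncritical (the critical locus being isolated, a short separate argument covers it) and form the local biholomorphism $\phi = (f|_{U_1})^{-1}\circ f|_{U_2}$ with $\phi(\tau_2)=\tau_1$ and $f\circ\phi = f$ on a neighborhood $U_2$ of $\tau_2$. The symmetry $F_p(x,y)=F_p(y,x)$ says that the root multiset of $F_p(w,\cdot)$ is computed identically from $\tau_1$ and from $\tau_2$: the two multisets $\{f(g,h,p\tau_i)\}\cup\{f(g,g^{-b}h,\tfrac{\tau_i+b}{p})\}$, $i=1,2$, coincide in $\C$. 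Thus $\phi$ carries the $p+1$ local isogeny-branches at $\tau_2$ to those at $\tau_1$, matching branch by branch, and I would track this matching as $\tau_2$ varies analytically over $U_2$. Feeding the $d=1$ branch through the dilation $\tau\mapsto p\tau$ (itself M\"obius) and the $d=p$ branches through $\tau\mapsto \tfrac{\tau+b}{p}$, and iterating, propagates the coincidence toward the cusp region, where the base case forces a translation; conjugating back by these M\"obius maps exhibits $\phi$ as the restriction of an element of $PSL_2(\R)$. Equivalently, one shows $\phi$ continues along every path to a single-valued self-biholomorphism of $\HH$, necessarily in $\Aut(\HH)=PSL_2(\R)$; the identity theorem then upgrades $f\circ\gamma=f$ from $U_2$ to all of $\HH$.

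The main obstacle is precisely the branch-matching step: as the base point moves I must show that the correspondence defined by the $F_p$ matches branches consistently and single-valuedly, that the continuation of $\phi$ develops no monodromy and meets no mismatched ramification at critical points, and that the equivariant labels $g^{-b}h$ decorating the $d=p$ branches do not obstruct the matching. This last point is where the present setting genuinely differs from \cite{CG97}: I would use that the coefficients of $F_p$ lie in $\C[f(g,h,\tau)]$ alone and that the label multiset of the branches agrees on the $\tau_1$- and $\tau_2$-sides, so that the combinatorial control of \cite{CG97} transfers with only cosmetic changes. This rigidity-and-continuation argument is the technical heart, and the part I expect to require the most care.
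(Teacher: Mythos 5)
Your proposal takes essentially the same route as the paper: both deduce the global symmetries by running the first half of the Cummins--Gannon argument on the symmetric equivariant modular equations, with the cusp univalence (periodicity in $\frac{|g|}{C}\Z$ from Proposition \ref{prop:expansion}) as the anchor. The one point you flag as a genuine worry --- the equivariant labels $g^{-b}h$ on the $d=p$ branches --- is resolved exactly as you hope: since $g^p=g$ and $h^p=h$, one has $f(g,g^{-b}h,\frac{\tau+b}{p}) = f(g,h,\frac{\tau+b}{p}-b)$, so the labeled branches become ordinary M\"obius branches for the modified matrices $\begin{pmatrix} 1 & (1-p)b \\ 0 & p \end{pmatrix}$, and the remaining changes to \cite{CG97} are indeed only cosmetic.
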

\begin{proof}
By Proposition \ref{prop:Hecke-mod-eq}, $f(g,h,\tau)$ satisfies equivariant modular equations of degree $p$ for infinitely many primes $p$ congruent to $1$ modulo $\lcm(|g|,|h|)$.

We give a list of modifications of the first half of \cite{CG97} (up to Proposition 4.6) to allow equivariance.  Note that the summands for $d=p$ are $f(g,g^{-b}h, \frac{\tau+b}{p}) = f(g,h,\frac{\tau+b}{p}-b)$, $0 \leq b < p$, so we make the global modification that
\[ A(p) = \left\{ \begin{pmatrix} 1 & (1-p)b \\ 0 & p \end{pmatrix} | 0 \leq b < p \right\} \cup \left\{ \begin{pmatrix} p & 0 \\ 0 & 1 \end{pmatrix} \right\} \]

The proof of existence of global symmetries in \cite{CG97} needs the following cosmetic changes:
\begin{enumerate}
\item The statement of Lemma 2.2 condition 1 should be changed from $z_1-z_2 \in \Z$ to $z_1-z_2 \in \frac{|g|}{C}\Z$.
\item Lemma 2.5 requires $\beta$ to change to $\begin{pmatrix} n/d & r-dr \\ 0 & d \end{pmatrix}$.  The proof uses the symmetry of $F_p(x,y)$, proved in the above lemma.
\item The statement of Lemma 3.2 requires the form of $\beta \in A(p)$ to be changed as above.
\item All occurrences of $\Z$ in the proof of Lemma 3.3 should be replaced by $\frac{|g|}{C}\Z$.
\item The phrase ``translating by integers if necessary'' in the proof of Proposition 4.3 should be replaced by ``translating by integer multiples of $\frac{|g|}{C}$ if necessary.''
\item In the proof of Proposition 4.6, the form of $\beta \in A(p)$ needs to be suitably adjusted.
\end{enumerate}
\end{proof}

\begin{cor}
Let $f$ be a Hecke-monic function on $\mathcal{M}_{Ell}^G$.  If $f(g,h,\tau)$ is nonconstant, then it is a holomorphic congruence genus-zero function.
\end{cor}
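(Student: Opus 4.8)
The plan is to deduce this corollary from Proposition \ref{prop:global} together with the full modular invariance that is built into the definition of a function on $\mathcal{M}_{Ell}^G$. First I would record the two features such an $f$ enjoys that a merely $\pm\Z$-equivariant function need not: on the one hand, the restriction of $f$ to the component of any commuting pair is weakly Hecke-monic for that pair, so Proposition \ref{prop:global} is available for every pair; on the other hand, $f$ satisfies $f(g^ah^c,g^bh^d,\tau)=f(g,h,\tfrac{a\tau+b}{c\tau+d})$ for all $\binom{a\,b}{c\,d}\in SL_2(\Z)$, so each $f(g,h,\cdot)$ is a genuine modular function, invariant under $\Gamma(\lcm(|g|,|h|))$. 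The obstacle to applying Proposition \ref{prop:global} directly is that it demands a pole at infinity, whereas the hypothesis supplies only nonconstancy; the first real step is therefore to relocate a pole to the cusp $\infty$.

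To do this, set $N=\lcm(|g|,|h|)$ and view $f(g,h,\cdot)$ as a meromorphic function on the compact modular curve $X(N)=\overline{\HH/\Gamma(N)}$. A nonconstant meromorphic function on a compact Riemann surface must attain the value $\infty$, and since $f(g,h,\cdot)$ is holomorphic on $\HH$ this pole occurs at a cusp. Every cusp of $\Gamma(N)$ is an $SL_2(\Z)$-translate of $\infty$, so there is $\gamma=\binom{a\,b}{c\,d}\in SL_2(\Z)$ for which $f(g,h,\cdot)$ has a pole at $\gamma\cdot\infty$. By the transformation law, the function $\tau\mapsto f(g',h',\tau)$ with $(g',h')=(g^ah^c,g^bh^d)$ equals $f(g,h,\gamma\tau)$ and hence has a pole at infinity, while remaining weakly Hecke-monic for $(g',h')$. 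Proposition \ref{prop:global} now applies and yields global symmetries for $f(g',h',\cdot)$.

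It remains to package this into the definition of a holomorphic congruence genus-zero function. Let $\Gamma_{f'}\subset SL_2(\R)$ be the full stabilizer of $f(g',h',\cdot)$. Since $|g'|$ and $|h'|$ divide $N$, we have $\Gamma(N)\subseteq\Gamma_{f'}$, so $\Gamma_{f'}$ is a congruence group; in particular it contains a lattice, whence it is either discrete or has closure all of $SL_2(\R)$, and the latter would force the continuous $f(g',h',\cdot)$, being invariant under a dense subgroup acting transitively on $\HH$, to be constant — so $\Gamma_{f'}$ is discrete. The global-symmetry conclusion says precisely that the fibers of $f(g',h',\cdot)$ are single $\Gamma_{f'}$-orbits, so $f(g',h',\cdot)$ descends to an injection $\HH/\Gamma_{f'}\to\C$ with infinite, hence dominant, image; finite order poles at the cusps are automatic from modularity and the Laurent hypothesis at infinity. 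Thus $f(g',h',\cdot)$ is holomorphic congruence genus-zero. Finally, because $f(g,h,\tau)=f(g',h',\gamma^{-1}\tau)$, conjugation by $\gamma\in SL_2(\Z)$ transports all of these properties back to $f(g,h,\cdot)$: its invariance group becomes $\gamma\Gamma_{f'}\gamma^{-1}$, which still contains $\Gamma(N)=\gamma\Gamma(N)\gamma^{-1}$ by normality of $\Gamma(N)$ in $SL_2(\Z)$, so the genus-zero and congruence properties persist.

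I expect the main difficulty to be conceptual rather than computational, since the analytic heart of the matter is already encapsulated in Proposition \ref{prop:global} and the Cummins--Gannon machinery it imports, both of which I am assuming. Within the corollary itself the only delicate points are the relocation of a pole to infinity — which hinges on the full $SL_2(\Z)$-modularity available on $\mathcal{M}_{Ell}^G$ but absent for general weakly Hecke-monic functions — and the discreteness bookkeeping needed to match the precise definition of $\Gamma_f$; both become routine once the modular structure is made explicit.
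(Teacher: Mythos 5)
Your proposal is correct and follows essentially the same route as the paper: use the full $SL_2(\Z)$-equivariance on $\mathcal{M}_{Ell}^G$ together with the absence of essential singularities to move a pole of the nonconstant function $f(g,h,\cdot)$ to the cusp at infinity, apply Proposition \ref{prop:global} to the translated pair, and transport the conclusion back. The paper's proof is just a terser version of this; your extra paragraphs merely make explicit the packaging (discreteness of the stabilizer, fibers being single orbits, conjugating by $\gamma$) that the paper leaves implicit.
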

\begin{proof}
From our hypotheses, we know that $f(g,h,\tau)$ is invariant under some $\Gamma(N)$, and we are assuming that $f$ has no essential singularities at cusps.  Therefore, if $f(g,h,\tau)$ is nonconstant, then there is some $\binom{ab}{cd} \in SL_2(\Z)$ such that $f(g,h, \frac{a\tau + b}{c\tau + d})$ has a pole at infinity.  Then $f(g^ah^c,g^bh^d, \tau)$ has a pole at infinity, and satisfies the hypotheses of the proposition.  This implies $f(g^ah^c,g^bh^d, \tau)$ is holomorphic congruence genus-zero, so $f(g,h,\tau)$ is also.
\end{proof}

\section{Finite level}

Theorem 1.3 in \cite{CG97} asserts that any series $q^{-1} + O(q)$ with algebraic integer coefficients satisfying modular equations of all orders coprime to some $n$ is either holomorphic genus-zero and invariant under some $\Gamma_0(N)$, or a function of the form $q^{-1} + \zeta q$ for $\zeta$ either zero or a 24th root of unity.  The hypotheses we use to prove Theorem \ref{thm:main} are weaker, since the functions satisfy equivariant modular equations only for primes congruent to 1 (mod $n$), and the functions have the form $q^{C/|g|} + O(1)$.  However, our conclusions are weaker, since even if we normalize to an integral-powered $q$-series, we only have invariance under $\Gamma_1(N)$, much like the situation in \cite{C02}.

\begin{defn} Let $G$ be a subgroup of $SL_2(\R)$, and let $M$, $N$, and $C$ be nonzero integers, such that $M|N$.  We say that the quadruple $(G, M, N, C)$ satisfies properties 1-3 if:
\begin{enumerate}
\item $G$ is a discrete group.
\item The stabilizer of infinity $G_\infty \subset G$ is $\langle -Id, \begin{pmatrix} 1 & M \\ 0 & 1 \end{pmatrix} \rangle$.
\item For all primes $p$ congruent to $1$ mod $N$, and all $\binom{ab}{cd} \in G$, there exist integers $l$ and $k$ such that $l|p$, $0 \leq -k < p/l$, and such that:
\[ \begin{pmatrix} \frac{ap}{l} & \frac{k(1-p)a}{C}+lb \\ \frac{c}{l} & \frac{1}{p}\left( \frac{k(1-p)c}{C}+ld\right) \end{pmatrix}  \in G \]
\end{enumerate}
\end{defn}

\begin{lem} If $(G,M,N,C)$ satisfies properties 1-3, and $\gamma \in G$, then there exists $\lambda \in \R$ such that $\lambda \gamma \in GL_2^+(\Q)$.
\end{lem}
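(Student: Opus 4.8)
\noindent
The plan is to reduce, for $\gamma = \left(\begin{smallmatrix} a & b \\ c & d\end{smallmatrix}\right) \in G$, to showing that $a/c$, $d/c$, and $c^2$ are all rational: taking $\lambda = 1/c$ then exhibits $\lambda\gamma \in GL_2^+(\Q)$, since its entries are $a/c$, $b/c = (a/c)(d/c) - 1/c^2$, $1$, and $d/c$ (using $ad-bc=1$), and its determinant is $1/c^2 > 0$. If $c = 0$ then $\gamma$ fixes $\infty$, so $\gamma \in G_\infty = \langle -Id, T_M\rangle$ by property 2 (writing $T_M = \left(\begin{smallmatrix}1 & M \\ 0 & 1\end{smallmatrix}\right)$), whence $\gamma$ already has integer entries and $\lambda = 1$ works. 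So I assume $c \neq 0$ throughout.

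First I would record the standard discreteness bound: since $G$ is discrete (property 1) and contains $T_M$, there is an $\epsilon > 0$ such that every $\left(\begin{smallmatrix}a' & b' \\ c' & d'\end{smallmatrix}\right) \in G$ with $c' \neq 0$ satisfies $|c'| \geq \epsilon$. This is the usual Fuchsian argument: after left multiplication by a power of $T_M$ one may assume the upper-left entry lies in $[0, M|c'|)$, and then $g T_M g^{-1}$ has lower-left entry $-c'^2 M$ and tends to the identity as $c' \to 0$, contradicting discreteness. The consequence I will use is a separation statement: if $g_1, g_2 \in G$ both have lower-left entry exactly $c$, with first columns $(cy_1, c)$ and $(cy_2, c)$, then $g_1^{-1} g_2$ has lower-left entry $c^2(y_1 - y_2)$, so any two distinct cusps of $G$ represented by elements with lower-left entry $c$ differ by at least $\epsilon/c^2$.

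The heart of the proof is to show $\gamma(\infty) = a/c \in \Q$, and this is where property 3 enters. For a prime $p \equiv 1 \pmod N$ (infinitely many exist by Dirichlet) it produces an element of $G$ with first column $(ap/l, c/l)$ for some $l \mid p$; since $p$ is prime, $l \in \{1, p\}$, and the choice $l = p$ would give lower-left entry $c/p$ of absolute value $|c|/p < \epsilon$ once $p > |c|/\epsilon$, violating the bound. Hence for all large such $p$ we are in the case $l = 1$, producing $\gamma_p \in G$ with first column $(ap, c)$, that is, with the same lower-left entry $c$ and with $\gamma_p(\infty) = p\,(a/c)$. Iterating property 3 with a single fixed large prime $p \equiv 1 \pmod N$ therefore yields, for every $i \geq 0$, an element of $G$ with first column $(a p^i, c)$ representing the cusp $p^i (a/c)$ with lower-left entry $c$; left multiplication by powers of $T_M$ preserves the lower-left entry and shifts the cusp by multiples of $M$. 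By the separation statement, the reductions modulo $M$ of the numbers $p^i (a/c)$ form an $\epsilon/c^2$-separated subset of $[0, M)$, hence a finite set, so pigeonhole gives $i < i'$ with $p^i(a/c) \equiv p^{i'}(a/c) \pmod M$, i.e. $(p^{i'} - p^i)(a/c) \in M\Z$, forcing $a/c \in \Q$.

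Applying this to $\gamma$ and to $\gamma^{-1}$ (whose lower-left entry is $-c \neq 0$ and which satisfies $\gamma^{-1}(\infty) = -d/c$) gives $a/c, d/c \in \Q$. To pin down the scale I would compute $\gamma^2(\infty) = (a/c) - \big(c^2(a/c + d/c)\big)^{-1}$, which is again a cusp, hence rational; since $a/c$ and $d/c$ are rational and $a/c + d/c \neq 0$, this yields $c^2 \in \Q$, and the degenerate case $a + d = 0$ is handled by first replacing $\gamma$ with $\gamma T_M$ (same lower-left entry $c$, trace $cM \neq 0$, ratios $a/c$ and $M + d/c$ still rational) to extract the same $c^2$. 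With $a/c, d/c, c^2 \in \Q$ in hand, $\lambda = 1/c$ finishes the proof as described in the first paragraph. I expect the main obstacle to be the third step: marrying the multiplicative closure of the cusp set coming from property 3 with the additive separation coming from discreteness, and specifically verifying that the lower-left entry can be held fixed at $c$ under iteration, which is exactly what forces the $l = 1$ branch for large $p$; once the representing elements are constrained to share the lower-left entry $c$, the finiteness-by-separation argument is elementary and needs no equidistribution input.
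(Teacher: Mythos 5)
Your argument is correct and takes essentially the same route as the paper, whose proof simply defers to Lemma 5.4 of Cummins--Gannon: the key ingredients --- the discreteness bound on lower-left entries forcing $l=1$ for large primes $p \equiv 1 \pmod{N}$ in property 3, and the pigeonhole on the $\epsilon/c^2$-separated double-coset invariants $p^i(a/c) \bmod M$ --- are exactly those of the cited argument, adapted to $G_\infty = \langle -Id, \binom{1\,M}{0\,1}\rangle$ as the paper indicates. Your extraction of $c^2 \in \Q$ from $\gamma^2(\infty)$ (with the $\gamma T_M$ fix when $a+d=0$) is a harmless variant of the standard step via $\gamma T_M \gamma^{-1}$, whose lower-left entry $-c^2M$ yields the same conclusion directly.
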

\begin{proof}
This is a minor variation of Lemma 5.4 in \cite{CG97}.  Our $G_\infty$ is a subgroup of theirs, so double coset invariants surject.  We define $r_m \equiv a/c$ (mod $M$) instead of mod 1, but it is still a $G_\infty$ double coset invariant for our $G_\infty$.  The proof there uses a slightly different property 3 for $G$, but the two left entries of the matrices match, and that is what was needed.
\end{proof}

Following \cite{CG97}, we say that $\binom{ab}{cd}$ is primitive if $a,b,c,d$ are integers with no common factors.  By the previous lemma, there exists for any $\gamma \in G$ some $\lambda \in \R$ (unique up to sign) such that $\lambda \gamma$ is primitive, and we define $|\gamma|$ to be the determinant of $\lambda \gamma$.  This is an invariant of the double coset $G_\infty \gamma G_\infty$.

\begin{lem} \label{lem:CG5.7}
Let $(G,M,N,C)$ satisfy properties 1-3, let $\gamma_1 \in G$, let $\lambda \gamma_1 = \begin{pmatrix} a_1 & b_1 \\ c_1 & d_1 \end{pmatrix}$ be primitive, and assume $c_1 \neq 0$.  Choose a prime $p \equiv 1$ (mod $NC$), and choose a sequence of elements $\{ \gamma_n \}_{n\geq 1} \subset G$ by iteratively applying property 3.  Define $\lambda \gamma_i = \begin{pmatrix} a_i & b_i \\ c_i & d_i \end{pmatrix} \in M_2(\Q)$, and let $l_i, k_i \in \Z$ be the corresponding integers arising in each application of property 3.  Then
\begin{enumerate}
\item The sequence $\{ c_i \}_{i \geq 1}$ eventually stabilizes to some $c_\infty = c_1/\prod_{i \geq 1} l_i \in \Q$, i.e., all but finitely many $l_i$ are equal to $1$.
\item If $c_\infty \in \Z$, then $d_i \in \Z$ for all $i \geq 1$.
\item If $c_\infty$ is a nonzero integer multiple of $p$, then $p$ divides $d_i$ for all $i \geq 1$.
\item There exists $W>0$, depending only on $c_1$ and $\lambda$, such that if $p>W$, then $l_i=1$ and $d_i \in \Z$ for all $i \geq 1$.
\end{enumerate}
\end{lem}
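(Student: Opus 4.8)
The plan is to turn property 3 into explicit recurrences and then handle the four claims in the order (1), (4), (2), (3), with the $p$-adic integrality in (2) as the real content. Applying property 3 to $\gamma_i$ and scaling by $\lambda$, I read off
\[ a_{i+1}=\tfrac{p}{l_i}a_i,\qquad b_{i+1}=l_ib_i-k_i\kappa a_i,\qquad c_{i+1}=\tfrac{1}{l_i}c_i,\qquad d_{i+1}=\tfrac{1}{p}\bigl(l_id_i-k_i\kappa c_i\bigr), \]
where $\kappa:=(p-1)/C\in\Z$ is an integer because $p\equiv1\pmod{NC}$ forces $C\mid p-1$. Since $p$ is prime, $l_i\in\{1,p\}$, and $0\le -k_i<p/l_i$ gives $k_i=0$ when $l_i=p$. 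A one-line computation shows $\det(\lambda\gamma_{i+1})=\det(\lambda\gamma_i)$, so every $\lambda\gamma_i$ has the same determinant $\lambda^2$; in particular the $a,b$ recurrences never divide, so $a_i,b_i\in\Z$, with $a_i=a_1p^{n_i}$ and $c_i=c_1/p^{m_i}$, where $n_i,m_i$ count the $1$-steps and $p$-steps before stage $i$.

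For (1) I would use discreteness directly. As $G$ is discrete (property 1) and contains the unipotent $\begin{pmatrix}1&M\\0&1\end{pmatrix}$ fixing $\infty$ (property 2), a precisely invariant horoball at the cusp $\infty$ produces a uniform lower bound $|c/\lambda|\ge c_0>0$ on the lower-left entry of every element of $G$ with nonzero lower-left entry, where $c_0$ depends only on $G$. The lower-left entry of $\gamma_i$ is $c_i/\lambda=c_1/(\lambda p^{m_i})$, nonzero but tending to $0$ if $m_i\to\infty$; hence $m_i$ is bounded, only finitely many $l_i$ equal $p$, and $c_i$ stabilizes to $c_\infty=c_1/\prod_i l_i$. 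The same estimate gives (4): if $p>W:=|c_1|/(c_0|\lambda|)$ then $p^{m_i}\le |c_1|/(c_0|\lambda|)<p$ forces $m_i=0$, so $l_i=1$ for all $i$ and $c_i=c_1\in\Z$, whence $c_\infty\in\Z$ and (2) delivers $d_i\in\Z$.

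The heart is (2). The $d$-recurrence only ever divides by $p$, so for each prime $\ell\ne p$ it preserves $\ell$-integrality; from $d_1\in\Z$ this gives $d_i\in\Z_{(\ell)}$ for all $\ell\ne p$, and once $c_\infty\in\Z$ the same bookkeeping gives $c_i\in\Z$. It remains to bound $v_p(d_i)$ below. I first record a uniqueness fact: if two admissible $k,k'$ both land $\gamma_{i+1}$ in $G$, their quotient is $\begin{pmatrix}1&t\\0&1\end{pmatrix}\in G$ with $t=(k'-k)\kappa/p$, hence lies in $G_\infty$, forcing $M\mid t$; since $\gcd(p,\kappa M)=1$ this yields $p\mid k-k'$ and so $k=k'$, making the sequence canonical. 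Now restrict to the infinite tail where $l_i=1$ and $c_i=c_\infty$. Writing $d_i=e_i/p^{s_i}$ in lowest terms, the primitive model of $\lambda\gamma_i$ has determinant $|\gamma_i|=\lambda^2p^{2s_i}/g_i^2$ with $g_i$ prime to $p$, so $v_p(|\gamma_i|)=v_p(\lambda^2)+2s_i$. But every tail element shares the fixed lower-left entry $c_\infty/\lambda$, and elements of $G$ with a prescribed lower-left entry $C$ fall into only finitely many $G_\infty$-double cosets—such a coset is pinned down by $A\bmod MC$ and $D\bmod MC$. Since $|\gamma|$ is a double-coset invariant, $|\gamma_i|$ takes finitely many values, so $s_i$ is bounded and $v_p(d_i)$ is bounded below.

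Finally I would close with a valuation-propagation argument and read off (3). If some tail $d_i$ had $v_p(d_i)=-r<0$, then $v_p(k_i\kappa c_\infty)\ge0$ makes the recurrence force $v_p(d_{i+1})=v_p(d_i)-1$, and inductively $v_p(d_{i+t})=-r-t\to-\infty$, contradicting the lower bound just obtained; hence $v_p(d_i)\ge0$ and $d_i\in\Z$ throughout the tail. The backward relation $d_i=pd_{i+1}+k_i\kappa c_i$ (with $d_i=d_{i+1}$ at the finitely many $p$-steps) then propagates integrality back to $i=1$, proving (2). Claim (3) falls out of the same relation: when $c_\infty$ is a nonzero multiple of $p$ we have $p\mid c_i$ for every $i$, so once $d_{i+1}\in\Z$ the expression $pd_{i+1}+k_i\kappa c_i$ is a sum of two multiples of $p$, giving $p\mid d_i$ for all $i$. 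The one genuinely delicate point is the boundedness of $s_i$ in (2); everything there rests on the finiteness of double cosets with a fixed lower-left entry, which is exactly where discreteness of $G$ re-enters, and I expect this to be the main obstacle to pin down cleanly.
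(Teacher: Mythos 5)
Your proposal is correct and follows essentially the same route as the paper, which simply defers to Lemma 5.7 of \cite{CG97} with the indicated substitutions: parts (1) and (4) via the Shimura Lemma 1.25 lower bound on lower-left entries of a discrete group, and parts (2) and (3) by showing that a persistent $p$ in the denominator of $d_i$ forces $v_p(|\gamma_i|)=v_p(\lambda^2)+2s_i$ to grow without bound, contradicting the finiteness (by discreteness) of $G_\infty$-double cosets with fixed lower-left entry $c_\infty/\lambda$, on which $|\gamma|$ is an invariant. You have merely written out in full the argument the paper cites, so there is nothing to correct.
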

\begin{proof}
This is a minor alteration of Lemma 5.7 in \cite{CG97}, and we will point out the necessary changes.

The first statement follows from Lemma 1.25 of \cite{S71}, which asserts that the lower left entries of elements of a discrete subgroup of $SL_2(\R)$ that don't fix infinity are bounded away from zero.

The second and third statements can be proved by following the proofs of Lemma 5.7a and 5.7b in \cite{CG97}, and changing $n$ to $p$, $k_i$ to $k_i(1-p)/C$ (which is an integer by our assumption on $p$), and $p^{2((j-i)\eta+s)+s'}$ to $p^{2(i-i_0 + s)+s'}$.  The last alteration is mostly to rectify a typographical error.

The assertion about $l_i = 1$ in the fourth statement follows from Lemma 1.25 of \cite{S71}, and the assertion about $d_i$ follows from the second statement.
\end{proof}

\begin{lem}
Suppose $(G,M,N,C)$ satisfies properties 1-3, and $G$ does not stabilize infinity.  Then $G$ contains an element of the form $\binom{10}{n1}$ for $n$ a nonzero multiple of $NC$.
\end{lem}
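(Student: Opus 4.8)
The plan is to locate an element of $G$ sending $\infty$ to $0$, conjugate the translation $T=\begin{pmatrix}1&M\\0&1\end{pmatrix}$ by it to produce a parabolic fixing $0$, and then take a power to secure divisibility by $NC$. First I would invoke discreteness: since $G$ does not stabilize $\infty$, Lemma 1.25 of \cite{S71} guarantees that the nonzero lower-left entries of elements of $G$ are bounded away from $0$, so I may choose $\gamma_1\in G$ whose lower-left entry is nonzero of minimal absolute value and take its primitive multiple $\lambda\gamma_1=\begin{pmatrix}a_1&b_1\\c_1&d_1\end{pmatrix}$. Feeding $\gamma_1$ into the iterative construction of Lemma~\ref{lem:CG5.7} with a prime $p\equiv1\pmod{NC}$ chosen larger than the bound $W$, I obtain a sequence $\{\gamma_i\}\subset G$ with $l_i=1$ and $d_i\in\Z$ for all $i$; minimality of $|c_1|$ independently forces $l_i=1$ (otherwise the lower-left entry would drop below the minimum), so the lower-left entry stays equal to $c_1$ while the upper-left entries grow as $a_i=a_1p^{i-1}$.

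Next I would use the explicitly known stabilizer $G_\infty=\langle -\mathrm{Id},T\rangle$ to normalize the sequence: right multiplication by powers of $T$ shifts $d_i$ by multiples of $Mc_1$ without disturbing $a_i$ or $c_1$, and left multiplication shifts $a_i$ similarly. Because the $d_i$ produced by the recursion are bounded integers, after this normalization they take only finitely many values, while the cusps $\gamma_i(\infty)=a_i/c_1$ run through finitely many classes modulo $M$. A pigeonhole argument then yields indices $i<j$ with coincident reduced data, and the plan is to combine the two nearly equal parabolics $\gamma_i^{-1}T\gamma_i$ and $\gamma_j^{-1}T\gamma_j$---which have equal lower-left entry $-c_1^2M/\lambda^2$ but fixed points $-d_i/c_1$ and $-d_j/c_1$---so that the offset between their fixed points cancels and one lands exactly at $0$, producing $\sigma\in G$ with $\sigma(\infty)=0$. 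Conjugating $T$ by such a $\sigma$ gives a parabolic $\begin{pmatrix}1&0\\m&1\end{pmatrix}\in G$ with $m\neq0$; the shape of the property-3 matrices, with their $\frac{1-p}{C}$ and $\frac1p$ entries, together with $C\mid(p-1)$ and $N\mid(p-1)$, controls $m$ modulo $NC$. Finally, replacing this element by its $NC$-th power turns the lower-left entry into $mNC$, a nonzero multiple of $NC$, as required.

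The hard part will be the middle step: exhibiting $0$ as a cusp of $G$, that is, producing $\sigma\in G$ with $\sigma(\infty)=0$. The difficulty is structural rather than computational---every conjugate $\gamma_i^{-1}T^k\gamma_i$ of the $\infty$-translation fixes the rational $-d_i/c_1$, which is generically nonzero, and the naive products $\gamma_j\gamma_i^{-1}$ are either translations when $d_i=d_j$ or hyperbolic when $d_i\neq d_j$, never parabolic at $0$. Thus the conclusion cannot come from the cusp at $\infty$ alone; one must use property 3 in an essential way, since it is precisely property 3 (a disguised Atkin--Lehner/Fricke relation) that pins the second cusp to $0$ and supplies the divisibility of the lower-left entry by $NC$. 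I expect this step to mirror, and to require the same care as, the corresponding argument in \cite{CG97}, with the congruence $p\equiv1\pmod{NC}$ doing the work of their integrality hypotheses.
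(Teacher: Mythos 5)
There is a genuine gap, and you have correctly located it yourself: the entire argument funnels through producing $\sigma\in G$ with $\sigma(\infty)=0$, and that step is never carried out. Your own analysis shows why the pigeonhole plan stalls --- the conjugates $\gamma_i^{-1}T^k\gamma_i$ all fix $-d_i/c_1$, and the combinations $\gamma_j\gamma_i^{-1}$ are translations or hyperbolics, never parabolic at $0$ --- and the appeal to ``property 3 in an essential way'' is a placeholder, not an argument. Nothing in properties 1--3 obviously forces $0$ to be in the $G$-orbit of $\infty$, so this is not a detail one can defer. There are also problems downstream even if $\sigma$ existed: conjugating $T$ by a primitive $\lambda^{-1}\binom{0\,b}{c\,d}$ gives lower-left entry $Mc/b$, which need not be an integer, so ``take the $NC$-th power'' does not by itself produce a lower-left entry that is an integer multiple of $NC$; and the claim that property 3 ``controls $m$ modulo $NC$'' is asserted without any mechanism.

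The paper's proof avoids the cusp-at-$0$ question entirely. Starting from any $\gamma\in G$ whose primitive form $\lambda\gamma=\binom{a\,b}{c\,d}$ has $c\neq 0$, it forms $\gamma'=\gamma\binom{1\ -NC|\gamma|}{0\ \ \ 1}\gamma^{-1}=\binom{1+NCac\ \ -NCa^2}{NCc^2\ \ 1-NCac}\in G\cap\Gamma(N)$: a parabolic fixing $a/c$ (not $0$) with lower-left entry already equal to $NCc^2$, a nonzero multiple of $NC$. The rest of the proof never changes that entry. Right multiplication by powers of $\binom{1\ NC}{0\ \ 1}\in G_\infty$ (legal since $M\mid N$) adjusts the lower-right entry within its class mod $NCc^2$ until, by Dirichlet, it equals a prime $p\equiv 1\pmod{NC}$ exceeding the bound $W$ from the analogue of Lemma 5.7; one application of property 3 with this $p$ (forcing $l=1$) divides the lower-right entry by $p$, landing it at $1$ plus a multiple of $N$; a further right translation makes it exactly $1$, and a left translation by a multiple of $\binom{1\ N}{0\ 1}$ clears the top row, yielding $\binom{1\ \ 0}{NCc^2\ 1}$. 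So the single application of property 3 is used to normalize the \emph{diagonal} of an already-parabolic element, not to manufacture a new cusp. If you want to salvage your write-up, replace the search for $\sigma(\infty)=0$ with this conjugation trick; it hands you the correct lower-left entry for free and reduces the remaining work to bookkeeping inside $G_\infty$.
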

\begin{proof}
Since $G$ does not stabilize infinity, then $G$ contains $\gamma$ such that the primitive $\lambda \gamma = \binom{ab}{cd}$ has $c \neq 0$, and hence
\[ \gamma' = \gamma \begin{pmatrix} 1 & -NC|\gamma| \\ 0 & 1 \end{pmatrix} \gamma^{-1} = \begin{pmatrix} 1 + NCac & -NCa^2 \\ NCc^2 & 1-NCac \end{pmatrix} \in G \cap \Gamma(N). \]

Let $W(\gamma')$ be the constant given by the fourth part of lemma \ref{lem:CG5.7}.  After translating on the right by multiples of $\begin{pmatrix} 1 & NC \\ 0 & 1 \end{pmatrix}$, we find that $G$ has an element $g = \begin{pmatrix} 1 + NCac & b' \\ NCc^2 & p \end{pmatrix}$, with $p \equiv 1$ (mod $NC$) a prime larger than $W(\gamma')$.  Since both matrices have primitive multipliers $\lambda = 1$ and the same bottom left entries, $W(\gamma') = W(g)$.  We apply the fourth part of lemma \ref{lem:CG5.7} and use property 3 to find that $G$ contains $\begin{pmatrix} (1+NCac)p & k(1-p)(1-NCac)/C + b' \\ NCc^2 & k(1-p)Nc^2 + 1 \end{pmatrix} \in \Gamma(NC)$ for some $0 \leq k < p$.   Since $M|N$, we can multiply on the right by $\begin{pmatrix} 1 & kmN \\ 0 & 1 \end{pmatrix} \in G$, where $m = \frac{p-1}{NC} \in \Z$, and this yields $\begin{pmatrix} * & * \\ NCc^2 & 1 \end{pmatrix} \in \Gamma(N)$.  We multiply on the left by a suitable multiple of $\begin{pmatrix} 1 & N \\ 0 & 1 \end{pmatrix}$ to get $\begin{pmatrix} 1 & 0 \\ NCc^2 & 1 \end{pmatrix} \in G$.
\end{proof}

\begin{lem}
Let $X$ be a set of matrices $\begin{pmatrix} 1+an & bn \\ cn & 1+dn \end{pmatrix} \in \Gamma(n)$, satisfying:
\begin{enumerate}
\item For every integer $c_0$, there exists an element of $X$ as above with $c = c_0$.
\item For all nonzero $c$, and all $a_0$ and $d_0$ satisfying $(1+a_0 n,cn) = (1+d_0 n,cn) = 1$, there exists an element of $X$ as above such that $a \equiv a_0$ (mod $|c|n$) and $d \equiv d_0$ (mod $|c|n$).
\end{enumerate}
Then $X$ is a complete set of double coset representatives for $\Gamma(n)$ with respect to the subgroup $\langle \begin{pmatrix} 1 & n \\ 0 & 1 \end{pmatrix} \rangle$.
\end{lem}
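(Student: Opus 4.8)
The plan is to classify the double cosets $H\backslash\Gamma(n)/H$ for $H=\langle\begin{pmatrix}1&n\\0&1\end{pmatrix}\rangle$ by a complete invariant extracted from the left-hand column, and then to verify that conditions~1 and~2 furnish an element of $X$ in every double coset. First I would record the action of $H$ on a matrix $\gamma=\begin{pmatrix}1+an&bn\\cn&1+dn\end{pmatrix}\in\Gamma(n)$: left multiplication by $\begin{pmatrix}1&kn\\0&1\end{pmatrix}$ fixes $c$ and $d$ and sends $a\mapsto a+kcn$, while right multiplication by $\begin{pmatrix}1&ln\\0&1\end{pmatrix}$ fixes $a$ and $c$ and sends $d\mapsto d+lcn$. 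Thus $c$ is a double-coset invariant, and for $c\neq0$ so are the residues $a\bmod cn$ and $d\bmod cn$.

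The crux is that for $c\neq0$ the single residue $a\bmod cn$ already determines the double coset. The relation $(1+an)(1+dn)-bcn^2=1$ gives $(1+an)(1+dn)\equiv1\pmod{cn^2}$. The coprimality $\gcd(1+an,cn)=1$ upgrades to $\gcd(1+an,cn^2)=1$, since a prime dividing $n$ cannot divide $1+an$, while a prime dividing $1+an$ and $cn^2$ but not $n$ must divide $c$, hence $cn$; so $1+an$ is a unit modulo $cn^2$ and $1+dn\equiv(1+an)^{-1}\pmod{cn^2}$, which pins down $d\bmod cn$ in terms of $a$. I would then show that two matrices sharing $c$ and $a\bmod cn$ lie in one double coset: equalize their $a$-entries by a left translation, note that the displayed congruence then equalizes their $d$-entries modulo $cn$, equalize the $d$-entries exactly by a right translation, and observe that the determinant relation now forces the remaining entries $b$ to coincide. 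This identifies the double cosets with the value $c=0$, giving the coset of $H$ itself (here $c=0$ forces $1+an=1+dn=\pm1$, and for the levels $n\geq3$ relevant here the sign is $+$, so $a=d=0$), together with the pairs $(c,a\bmod cn)$ with $c\neq0$ and $\gcd(1+an,cn)=1$.

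Finally I would match the classification to the hypotheses. Condition~1 with $c_0=0$ supplies the representative of the trivial coset. Given $c\neq0$ and an admissible $a_0$ with $\gcd(1+a_0n,cn)=1$, let $d_0$ be the residue determined by $1+d_0n\equiv(1+a_0n)^{-1}\pmod{cn^2}$; because $(1+a_0n)^{-1}$ is a unit modulo $cn$, the coprimality $\gcd(1+d_0n,cn)=1$ holds automatically, so condition~2 produces an element of $X$ with $a\equiv a_0$ and $d\equiv d_0\pmod{|c|n}$, which by the classification lies in the targeted double coset. Hence every double coset of $\Gamma(n)$ modulo $H$ contains an element of $X$, which is the assertion.

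The step I expect to be the main obstacle is reconciling the liberally stated condition~2---quantified over all pairs $(a_0,d_0)$ obeying the two gcd conditions---with the rigidity that $d\bmod cn$ is forced by $a$: one must check that the forced $d_0$ automatically satisfies its own coprimality condition, so that condition~2 is genuinely applicable, and that an element of $X$ realizing a prescribed $(a_0,d_0)$ can exist only for the realizable pairs singled out by the classification. The clean unit-modulo-$cn^2$ argument is what drives both the determination of $d$ and the placement of the representative in the correct coset, and getting that modular arithmetic exactly right---in particular, that $a\bmod cn$, rather than $a\bmod cn^2$, is the right invariant---is the delicate part.
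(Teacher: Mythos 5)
Your argument is correct and follows the same skeleton as the paper's proof: compute the two-sided action of $\left\langle\left(\begin{smallmatrix}1&n\\0&1\end{smallmatrix}\right)\right\rangle$ on the entries (left translation shifts $a$ by multiples of $cn$, right translation shifts $d$ by multiples of $cn$, $c$ is invariant, and for $c\neq 0$ the entry $b$ is forced by the determinant), then match the resulting invariants against conditions 1 and 2. Where you go further is in observing that $\gcd(1+an,cn)=1$ upgrades to $\gcd(1+an,cn^2)=1$, so the determinant relation forces $1+dn\equiv(1+an)^{-1}\pmod{cn^2}$ and the single residue $a\bmod cn$ already determines the double coset. This refinement is worth having, because it surfaces a wrinkle the paper's terser proof passes over: condition 2 as literally stated quantifies over \emph{all} pairs $(a_0,d_0)$ satisfying the two gcd conditions, yet only the pairs with $(1+a_0n)(1+d_0n)\equiv 1\pmod{cn^2}$ are realized by elements of $\Gamma(n)$, so no subset $X\subseteq\Gamma(n)$ can satisfy condition 2 verbatim. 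Your proof invokes condition 2 only at the compatible pair $(a_0,d_0)$ attached to a given double coset, after checking that the forced $d_0$ inherits its coprimality automatically; this is exactly the restricted form of the hypothesis that the subsequent lemma actually verifies (there $p$ and $q$ are chosen so that $pq\equiv 1\pmod{cn^2}$), so your reading repairs the statement in the way the application requires. The one residual caveat, which you flag and the paper does not, is the $c=0$ case for $n\leq 2$, where $-I\in\Gamma(n)$ yields a second double coset with vanishing lower-left entry that condition 1 alone does not detect; this is harmless downstream since $-I$ lies in the ambient group by property 2, but it is a genuine (if minor) gap in the lemma as stated.
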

\begin{proof}
\[ \begin{aligned}
&\begin{pmatrix} 1 & en \\ 0 & 1 \end{pmatrix} \begin{pmatrix} 1+an & bn \\ cn & 1+dn \end{pmatrix} \begin{pmatrix} 1 & fn \\ 0 & 1 \end{pmatrix} = \\
&\qquad =\begin{pmatrix} 1+an+cen^2 & (b+e+f)n + (af+de)n^2 + cefn^3 \\ cn & 1+dn+cfn^2 \end{pmatrix}
\end{aligned} \]
To find all double coset representatives, it suffices to cover the possible lower triangular entries, since for $c \neq 0$, the top right entry is uniquely determined by the fact that the determinant is one.  Any element of $X$ satisfying $c=0$ lies in the group $\langle\begin{pmatrix} 1 & n \\ 0 & 1 \end{pmatrix} \rangle$, so the corresponding double coset is equal to this group.
\end{proof}

\begin{lem}
Suppose $(G,M,N,C)$ satisfies properties 1-3, and suppose $G$ contains $\binom{10}{n1}$ for some nonzero integer $n$ for $NC|n$.  Then $G$ contains $\Gamma(n)$.
\end{lem}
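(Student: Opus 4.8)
The plan is to deduce $\Gamma(n) \subseteq G$ from the double-coset lemma proved just above, by exhibiting inside $G \cap \Gamma(n)$ a complete set of representatives for the double cosets $\langle T \rangle \backslash \Gamma(n) / \langle T \rangle$, where $T := \begin{pmatrix} 1 & n \\ 0 & 1 \end{pmatrix}$. First I would record the setup: since $M \mid N \mid NC \mid n$, property 2 gives $-Id$ and $\begin{pmatrix} 1 & M \\ 0 & 1 \end{pmatrix} \in G$, whence $T = \begin{pmatrix} 1 & M \\ 0 & 1 \end{pmatrix}^{n/M} \in G$, and by hypothesis $L := \begin{pmatrix} 1 & 0 \\ n & 1 \end{pmatrix} \in G$. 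The preceding lemma identifies a family $X$ of matrices $\begin{pmatrix} 1+an & bn \\ cn & 1+dn \end{pmatrix}$ satisfying its conditions (1) and (2) as a full set of double-coset representatives. Because $\langle T \rangle \subseteq G$ and the double cosets partition $\Gamma(n)$, it suffices to show that $G \cap \Gamma(n)$ meets every double coset; then each $\langle T \rangle x \langle T \rangle \subseteq G$, and $\Gamma(n) = \bigsqcup_x \langle T \rangle\, x\, \langle T \rangle \subseteq G$.

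Condition (1) is immediate, since $L^{c} = \begin{pmatrix} 1 & 0 \\ cn & 1 \end{pmatrix} \in G \cap \Gamma(n)$ realizes every lower-left entry $cn$, $c \in \Z$. The content is therefore condition (2): for a fixed nonzero $c$ and targets $a_0, d_0$ with $(1+a_0n, cn) = (1+d_0n, cn) = 1$, I must produce an element of $G \cap \Gamma(n)$ with lower-left $cn$ whose diagonal meets the prescribed residues modulo $|c|n$. I would start from $L^{c}$, which is primitive with $|L^c| = 1$, and apply property 3 with a large prime $p$; by Lemma \ref{lem:CG5.7}(4), for $p$ exceeding the associated bound $W$ one may take $l = 1$, keeping lower-left $cn$ and integral lower-right, while the top-left entry becomes $p$. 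Choosing $p$ by Dirichlet's theorem simultaneously in the progression $p \equiv 1 \pmod{\lcm(N,C)}$ (so property 3 applies and the off-diagonal entries are rational) and in a progression forcing $p \equiv 1 + a_0 n \pmod{|c|n^2}$ (so that the new top-left $= p$ lies in class $a_0$ modulo $|c|n$) places the top-left correctly. The determinant relation then pins $1 + dn \equiv (1+a_0 n)^{-1} \pmod{|c|n^2}$, which is exactly the companion class $d_0$ allowed by the coprimality hypotheses, and right multiplication by powers of $T$ together with $-Id$ cleans up the off-diagonal entry to land genuinely in $\Gamma(n)$.

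The hard part will be condition (2), and specifically the control of the diagonal modulo $|c|n$. Translations fix the diagonal modulo $|c|n$, so the only flexibility comes from the top-left entry $p$ supplied by property 3 and from the parameter $k$; the argument thus rests on choosing $p$ in two arithmetic progressions at once. This is possible precisely when the two congruences agree on $\gcd(\lcm(N,C), |c|n^2) = \lcm(N,C)$, and since $\lcm(N,C) \mid NC \mid n$ the required agreement $a_0 n \equiv 0 \pmod{\lcm(N,C)}$ holds automatically. This is the one essential use of the divisibility $NC \mid n$, and it is exactly what rules out the mod-$C$ obstruction that would otherwise appear.

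The residual work is bookkeeping: one must verify that the matrix produced by property 3, after the translation adjustments, satisfies top-right $\equiv 0$ and lower-right $\equiv 1$ modulo $n$, so that it truly lies in $\Gamma(n)$ and in the intended double coset, and that the coprimality conditions guarantee the needed inverse exists modulo $|c|n^2$. Each of these follows the final steps of \cite{CG97} with the substitutions already recorded in Lemma \ref{lem:CG5.7}, the lower-bound-on-$c$ input from Lemma 1.25 of \cite{S71} controlling when $l_i = 1$, so I would simply indicate the changes rather than reproduce the computation. Once representatives for all double cosets are produced, the partition argument of the first paragraph closes the proof.
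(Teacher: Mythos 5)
Your overall skeleton matches the paper's: reduce to the preceding double-coset lemma, get condition (1) from powers of $L=\begin{pmatrix}1&0\\ n&1\end{pmatrix}$, and get condition (2) by applying property 3 with a large Dirichlet prime so that Lemma \ref{lem:CG5.7}(4) forces $l=1$. But there is a genuine gap in your treatment of condition (2), at exactly the point you defer to ``bookkeeping.'' Applying property 3 directly to $L^{c}$ (so $a=1$, $b=0$, $l=1$) produces $\begin{pmatrix} p & k(1-p)/C \\ cn & \tfrac{1}{p}\bigl(k(1-p)cn/C+1\bigr)\end{pmatrix}$, and you have no control over $k$: it is pinned down modulo $p$ by the requirement that the lower-right entry be an integer, and in particular $k\neq 0$ (taking $k=0$ would give lower-right entry $1/p$). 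Writing $1-p=-a_0n+j|c|n^2$, the top-right entry equals $k\cdot\tfrac{n}{C}(-a_0+j|c|n)$, which is a multiple of $n$ only when $C\mid ka_0$ --- not in general. Since left or right multiplication by powers of $T=\begin{pmatrix}1&n\\0&1\end{pmatrix}$ changes the top-right entry only by multiples of $n$, no cleanup by translations can repair a top-right entry that is nonzero mod $n$; so your matrix need not lie in $\Gamma(n)$, and the determinant relation then pins the lower-right entry only modulo $|c|n^2/C$, not modulo $|c|n^2$, which is not enough to land in the class $1+d_0n$ (equivalently $d\equiv d_0 \pmod{|c|n}$).

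The paper closes this gap with a two-prime trick that kills $k$ outright, and this is the essential idea your argument is missing. Choose primes $p\equiv 1+a_0n$ and $q\equiv 1+d_0n \pmod{|c|n^2}$ with $p$ large; the realizability constraint $(1+a_0n)(1+d_0n)\equiv 1\pmod{|c|n^2}$ gives $pq=mcn^2+1$ for some integer $m$, so $L^{c}T^{m}=\begin{pmatrix}1& mn\\ cn& pq\end{pmatrix}\in G\cap\Gamma(n)$. Applying property 3 with the prime $p$ to \emph{this} matrix makes the new lower-right entry $k(1-p)cn/(pC)+q$; integrality now forces $p\mid k$ because $(1-p)cn/C$ is prime to $p$, and the bound $0\le -k<p$ then forces $k=0$. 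The output is exactly $\begin{pmatrix}p& mn\\ cn& q\end{pmatrix}$, which is the desired double-coset representative. The rest of your proposal (the partition into double cosets, the compatibility of the congruences on $p$, the identification of where $NC\mid n$ enters) is sound, but without the device that forces $k=0$ the construction of the representatives does not go through.
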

\begin{proof}
It suffices to produce double coset representatives with respect to translations, so suppose we are given $a,c,d$ satisfying the conditions in the above lemma, with $c \neq 0$.  Let $r>0$ be a lower bound on absolute value of nonzero lower left entries of elements of $G$, guaranteed by Lemma 1.25 of \cite{S71}.  By Dirichlet, there exist primes $p$ and $q$ such that $p \equiv 1 + an$ (mod $|c|n^2$), $q \equiv 1 + dn$ (mod $|c|n^2$), and $p > \text{max}(|c|n/r, |c|n)$.  Since $(1+an)(1+dn)-bcn^2 = 1$, there exists an integer $m$ such that $pq = mcn^2 + 1$.  Then:
\[ \begin{pmatrix} 1 & 0 \\ cn & 1 \end{pmatrix} \begin{pmatrix} 1 & mn \\ 0 & 1 \end{pmatrix} = \begin{pmatrix} 1 & mn \\ cn & pq \end{pmatrix} \in G \]
By the fourth statement in Lemma \ref{lem:CG5.7}, any application of property 3 for our choice of $p$ to this matrix requires $l=1$, so $G$ contains $\begin{pmatrix} p & k(1-p)p/C + mn \\ cn & k(1-p)cn/pC + q \end{pmatrix}$ with $\frac{k(1-p)cn}{pC} \in \Z$.  By our assumptions on $p$, $(1-p)cn$ is coprime to $p$, so $k=0$.  Therefore, $G$ contains $\begin{pmatrix} p & mn \\ cn & q \end{pmatrix}$, and this is the desired double coset representative.
\end{proof}

We say that a function on $\HH$ is of trigonometric type if after some transformation $\tau \mapsto a \tau + b$, it has the form $q^{-1} + a_0 + \zeta q$, for $\zeta$ a root of unity or zero.

\begin{thm} \label{thm:main}
Let $f$ be weakly Hecke-monic for $(g,h)$, and suppose $f(g,h,\tau)$ has a pole at infinity, and $q$-expansion coefficients that are algebraic integers.  Then $f(g,h,\tau)$ is either of trigonometric type or holomorphic congruence genus-zero.
\end{thm}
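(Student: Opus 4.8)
The plan is to distill the earlier sections into a single discrete symmetry group attached to $f(g,h,\tau)$, and then run it through the group-theoretic machinery assembled in this section. I would begin by recording the analytic normal form. By Lemma \ref{lem:form} the pole hypothesis forces $f(g,h,\tau) = \zeta q^{C/|g|} + O(1)$ with $C$ a negative integer and $\zeta$ a root of unity, and by Proposition \ref{prop:expansion} the entire expansion is a Laurent series in $q^{-C/|g|}$ with a single polar term. Replacing $f$ by $\zeta^{-1}f$, rescaling $\tau$ by the factor $-C/|g|$, and subtracting the constant term---all of which preserve the algebraic-integer property, since $\zeta$ is a unit and the constant term is an algebraic integer---I may assume the normalized $q$-expansion is $q^{-1} + O(q)$, exactly the shape treated in \cite{CG97}. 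I then take $G = \Gamma_f \subset SL_2(\R)$ to be the group of global symmetries produced by Proposition \ref{prop:global}; non-constancy of $f$ forces $G$ to be discrete, and the polar normal form identifies the stabilizer of infinity as $G_\infty = \langle -Id, \binom{1M}{01} \rangle$ for the minimal translation period $M$. Setting $N = \lcm(|g|,|h|)$ and keeping $C$ from Lemma \ref{lem:form}, I obtain a candidate quadruple $(G,M,N,C)$, with $M \mid N$ coming from the translation-equivariance of $f$.

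The heart of the argument is to check that $(G,M,N,C)$ satisfies properties 1--3. Properties 1 and 2 are immediate from discreteness and the description of $G_\infty$. Property 3 is the substantive point: it asserts that composing any $\gamma \in G$ with an appropriate member of the set $A(p)$ introduced in the proof of Proposition \ref{prop:global} returns an element of $G$, for every prime $p \equiv 1 \pmod N$. This is precisely the content of the equivariant modular equations of Proposition \ref{prop:Hecke-mod-eq} together with the global-symmetry statement: the order-$p$ equivariant modular equation relates $f(g,h,\tau)$ to the values $f(g,g^{-b}h,\frac{\tau+b}{p})$, and Proposition \ref{prop:global} converts coincidences among these values into genuine $SL_2(\R)$-symmetries of the single function $f(g,h,\cdot)$. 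I would establish property 3 by transcribing the corresponding computation from \cite{CG97}, inserting the $1/C$-scaling of the translation period and the $(1-p)$-twist recorded in the modified $A(p)$, and carefully tracking the $g^{-b}$ monodromy twist so that every value in play is genuinely a value of the \emph{same} function rather than of one of its conjugates.

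With properties 1--3 verified, the dichotomy is formal. If $G$ does not stabilize infinity, then the lemma asserting that such a $G$ contains $\binom{10}{n1}$ with $NC \mid n$ applies, and the final lemma of this section upgrades this to $\Gamma(n) \subseteq G$; thus $f$ is invariant under a congruence group. Since Proposition \ref{prop:global} also shows $f$ is injective on $\HH/G$ and hence induces a dominant injection to $\C$, the function is holomorphic congruence genus-zero. If instead $G$ stabilizes infinity, then $G = G_\infty$ acts only by translation, so $f$ is injective modulo its period on the cylinder $\HH/G_\infty$; here I would invoke the degenerate branch of Theorem 1.3 of \cite{CG97}, whose proof uses the algebraic-integer hypothesis to force a normalized series $q^{-1}+O(q)$ with a single translation period and no cusp-mixing symmetry to collapse to the shape $q^{-1} + a_0 + \zeta q$ with $\zeta$ a root of unity or zero, that is, to trigonometric type.

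I expect property 3 to be the main obstacle, since it is where the equivariant setting genuinely departs from \cite{CG97}: one must confirm that the twisting by powers of $g$ in the equivariant modular equation does not destroy closure of $G$ under the $A(p)$-operations, and that the parameters $M$, $N$, $C$ interact as the definition demands. The secondary subtlety is to check that the algebraic-integer argument of \cite{CG97} survives the normalization $\zeta^{-1}f(g,h,-\tfrac{|g|}{C}\,\tau)$, so that in the infinity-stabilizing case one really recovers a root of unity $\zeta$ and hence a function of trigonometric type.
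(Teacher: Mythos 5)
Your proposal follows essentially the same route as the paper: polar normal form from Lemma \ref{lem:form} and Proposition \ref{prop:expansion}, global symmetries from Proposition \ref{prop:global}, verification that the symmetry group yields a quadruple satisfying properties 1--3, and then the dichotomy via the lemmas of this section, with the degenerate case handled by the argument of \cite{CG97}. The only cosmetic difference is that you normalize $f$ fully to $q^{-1}+O(q)$ while the paper just rescales $\tau$ by $-1/C$ and carries $|g|$ and $C$ in the quadruple $(G',|g|,\lcm(|g|,|h|),C)$; note only that in the infinity-stabilizing case you cannot cite Theorem 1.3 of \cite{CG97} as a black box, since only primes $p\equiv 1 \pmod{\lcm(|g|,|h|)}$ are available, and one must instead rerun the proof of their Lemma 7.2 with equivariant modular equations, exactly as the paper does.
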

\begin{proof}
By Proposition \ref{prop:expansion}, the $q$-expansion of $f(g,h,\tau)$ has the form $\zeta q^{C/|g|} + O(1) \in \overline{\Q}((q^{-C/|g|}))$ for some root of unity $\zeta$ and some negative integer $C$.  By Proposition \ref{prop:Hecke-mod-eq}, $f(g,h,\tau)$ satisfies equivariant modular equations for all primes $p$ satisfying $g^p = g$, $h^p = h$.  Following the proof of \cite{CG97} Lemma 7.1 (changing $\frac{az+b}{d}$ to $\frac{az+b(1-d)}{d}$ and $\Z$ to $\frac{|g|}{C}\Z$), we find that $f(g,h,\tau)$ is invariant under a discrete subgroup of $SL_2(\R)$.  By proposition \ref{prop:global}, $f(g,h,\tau)$ admits global symmetries, and in particular, an altered version of \cite{CG97}, Lemma 3.2 holds, where $A(p)$ is replaced by the equivariant version.  In summary, the group $G$ of global symmetries of $f(g,h,\tau)$ satisfies the following three conditions:

\begin{enumerate}
\item $G$ is a discrete group.
\item The stabilizer of infinity $G_\infty \subset G$ is $\langle -Id, \begin{pmatrix} 1 & \frac{|g|}{C} \\ 0 & 1 \end{pmatrix} \rangle$.
\item For all primes $p$ congruent to $1$ mod $\lcm(|g|,|h|)$, and all $\binom{ab}{cd} \in G$, there exist integers $l$ and $k$ such that $l|p$, $0 \leq -k < p/l$, and such that:
\[ \begin{pmatrix} p & 0 \\ 0 & 1 \end{pmatrix} \begin{pmatrix} a & b \\ c & d \end{pmatrix} \begin{pmatrix} l & k(p-1) \\ 0 & p/l \end{pmatrix}^{-1} = \begin{pmatrix} ap/l & k(1-p)a+lb \\ c/l & (k(1-p)c+ld)/p \end{pmatrix}  \in G \]
\end{enumerate}

We now consider the function $f(g,h,-\tau/C)$, which is a power series in $q^{1/|g|}$.  Let $G'$ denote the subgroup of $SL_2(\R)$ that fixes $f(g,h,-\tau/C)$, so $G' = \begin{pmatrix} -1/C & 0 \\ 0 & 1 \end{pmatrix} G \begin{pmatrix} -C & 0 \\ 0 & 1 \end{pmatrix}$.  The quadruple $(G', |g|, \lcm(|g|,|h|),C)$ then satisfies conditions 1-3.

If $G'$ does not fix infinity, then by the previous lemmata, $G'$ contains $\Gamma(n)$ for some $n$, and $G$ contains some congruence group.  $f(g,h,\tau)$ is therefore a holomorphic congruence genus-zero function.

If $G'$ fixes infinity, then $G = G_\infty = \langle -Id, \begin{pmatrix} 1 & \frac{|g|}{C} \\ 0 & 1 \end{pmatrix} \rangle$.  The proof that $f(g,h,\tau)$ is of trigonometric type is given by following the first half of the proof of \cite{CG97}, Lemma 7.2, and replacing modular equations with equivariant modular equations, and $q$ with $q^{-C/|g|}$.
\end{proof}

\noindent\textbf{Remark:} The proof of \cite{CG97}, Lemma 7.2 allows us to make a slightly stronger statement: If we don't necessarily have algebraic integer coefficients, but we know that $f(g,h,\tau) = \zeta q^{C/|g|} + \sum_{n \geq 0} a_n q^{-nC/|g|}$ satisfies $\sum n|a_n|^2 > 1$, then $f(g,h,\tau)$ is a holomorphic congruence genus-zero function.

\section{Replicability}

In this paragraph, we summarize some results and assertions in \cite{N84}.  One can start with a formal power series $f(q) = q^{-1} + \sum_{n > 0} a_n q^n$ and define numbers $H_{m,n}$ for $m,n \in \Z_{>0}$ by the bivarial transform:
\[ \log \frac{f(p)-f(q)}{p^{-1}-q^{-1}} = - \sum_{m,n=1}^\infty H_{m,n} p^m q^n.\]
For each $n>0$ there is a unique normalized Faber polynomial $\Phi_n(x)$ (depending on $f$), defined by the property that $\Phi_n(f(q)) = q^{-n} + O(q)$.  Since the polynomial in $f(q)$ that is $n$ times the coefficient of $p^n$ in the formal series $- \log p(f(p) - f(q))$ also has this form, we have $\Phi_n(f(q)) = q^{-n} + n\sum_m H_{m,n}q^m$.  We shall say that $f(q)$ is replicable if and only if for any $t \in \Z_{>0}$ there exists a series $f^{(t)}(q) = q^{-1} + O(q)$ such that $\Phi_n(f(q)) = \sum_{ad=n, 0 \leq b<d} f^{(a)}(\frac{a\tau + b}{d})$.  The series $f^{(t)} = \sum_{n>0} a_n^{(t)} q^n$ is called the $t$-th replicate of $f$, and by suitable use of induction, one can show that it is unique if it exists, and its coefficients satisfy the relation:
\[ H_{m,n} = \sum_{t|(m,n)} \frac{1}{t} a_{mn/t^2}^{(t)} \]
In particular, if $f$ is replicable, $H_{m,n}$ only depends on $(m,n)$ and $mn$.  Another induction argument implies the converse of this, i.e., that one can define replicability by this independence.

\noindent\textbf{Note:} Replicability was originally defined only for power series with rational integer coefficients, and for more general series, there is some disagreement in the literature regarding the correct definition.  Norton has proposed a definition of replicability for series that have irrational cyclotomic integer coefficients, and it seems to involve a Galois action.  This is potentially useful when considering functions invariant under some $\Gamma_0(N)$.  One can instead extend one of the integral definitions above to allow arbitrary complex coefficients, without alteration of the formulas, and this was done in \cite{K94}.  We will use the latter generalization of the condition in this paper, because we can prove something about series that satisfy it.

%[cite Faber for polynomials?  Uber polynomische Entwickelungen, Math. Annalen 57, 389Ð408 (1903).  John McKay says Faa di Bruno's work predated this by 50 years.]

\begin{defn} A replicable function is a replicable power series that converges on the open unit $q$-disc, i.e., one that expands to a holomorphic function on $\HH$.  A replicable function has order $n$ if $f^{(m)} = f^{(m+n)}$ for all $m>0$.
\end{defn}

We note that a replicable function of finite order has a unique minimal order, but not a unique order.  If $f$ is a replicable function, then all $\Phi_m(f)$ are holomorphic on $\HH$, hence all $f^{(m)}$ are also holomorphic on $\HH$.

We would like to relate replicability to Hecke-monicity.

\begin{lem} \label{lem:unique}
If $f$ is a weakly Hecke-monic function for $(1,g)$, such that $f(1,g,\tau) = q^{-1} + O(q)$, then $f(1,g^m,\tau)$ has the form $q^{-1} + O(1)$, and is uniquely defined by the Hecke-monic property up to a constant.
\end{lem}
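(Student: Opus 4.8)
The plan is to run everything through the explicit formula for $nT_n$ from Section 1, specialized to $(g,h)=(1,g)$. Writing $f_m(\tau):=f(1,g^m,\tau)$ and placing the identity in the first slot (so $g^d=1$ and $g^{-b}h^a=g^a$), that formula collapses to
\[ nT_nf(1,g,\tau)=\sum_{ad=n}\ \sum_{0\le b<d} f_a\!\left(\frac{a\tau+b}{d}\right); \]
each $f_m$ is an ordinary Laurent series in $q$ because $|1|=1$, so the whole statement is one about $q$-series. The engine is the elementary observation that for $a\mid n$ with $d=n/a>1$, the inner average sends a term $c_kq^{k}$ of $f_a$ to $d\,c_k\,q^{ka/d}$ when $d\mid k$ and annihilates it otherwise, since $\sum_{0\le b<d}e(kb/d)\in\{0,d\}$. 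In particular the polar term $q^{-1}$ of $f_a$ is killed (as $d\nmid -1$ for $d>1$), so provided $f_a$ has pole order at most one this average contributes only a constant $d\,c_0^{(a)}$ together with positive powers. Hence everything in $nT_nf(1,g,\tau)$ strictly below $q^0$ comes from the single summand $a=n,\ d=1$, namely $f_n(n\tau)$.

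The form claim then falls to strong induction on $m$, with base case $m=1$ supplied by the hypothesis $f_1=q^{-1}+O(q)$. Assuming $f_a=q^{-1}+O(1)$ for every $a<m$, weak Hecke-monicity gives $mT_mf(1,g,\tau)=P_m(f_1)$ for a monic $P_m$ of degree $m$; because $f_1$ has a simple pole, $P_m(f_1)$ has pole order exactly $m$ with leading coefficient $1$ and lowest exponent $-m$. Matching the part below $q^0$ against the previous paragraph identifies the negative part of $f_m(m\tau)$ with that of $P_m(f_1)$. But $f_m(m\tau)$ carries only exponents divisible by $m$, and the sole negative multiple of $m$ that is $\ge -m$ is $-m$ itself; so $f_m$ must have a simple pole with leading coefficient $1$, i.e. $f_m=q^{-1}+O(1)$.

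For uniqueness up to a constant I would first note that $P_m$ is the Faber polynomial up to an additive constant: the pole analysis gives $P_m(f_1)=q^{-m}+(\text{const})+O(q)$, while the normalized Faber polynomial has $\Phi_m(f_1)=q^{-m}+O(q)$, so $P_m(f_1)-\Phi_m(f_1)$ is a polynomial in $f_1$ with no pole and is therefore constant. Thus $mT_mf(1,g,\tau)=\Phi_m(f_1)+c_m$ is pinned down by $f_1$ alone up to the single scalar $c_m$. Inverting the Hecke formula,
\[ f_m(m\tau)=mT_mf(1,g,\tau)-\sum_{ad=m,\ d>1}\ \sum_{0\le b<d} f_a\!\left(\frac{a\tau+b}{d}\right), \]
and inducting, replacing any lower $f_a$ by $f_a+\kappa_a$ shifts its $b$-average by the constant $d\kappa_a$; hence the entire right-hand side, and with it $f_m$, is determined by $f_1$ up to an additive constant.

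The main obstacle is not any single deep step but the discipline of separating the negative, constant, and positive parts of these averaged sums and threading the induction correctly: one must invoke ``pole order at most one for all smaller $m$'' to clear the poles of the $d>1$ summands before one is entitled to read off the pole order at $m$. The root-of-unity bookkeeping governing which exponents survive each $b$-average, and the tracking of constant terms through the inversion, are routine but are exactly where a sign or divisibility slip would hide.
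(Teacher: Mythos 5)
Your proof is correct and follows essentially the same route as the paper's: induct on $m$, isolate the $a=m,\,d=1$ summand of $mT_mf(1,g,\tau)$, observe that the $d>1$ summands contribute nothing to the polar part, read off the pole of $f(1,g^m,m\tau)$ from monicity, and conclude uniqueness because the ambiguity is a polynomial in $f(1,g,\tau)$ with no pole, hence a constant. The only cosmetic differences are that you kill the $d>1$ polar terms outright via the root-of-unity sum (the paper merely bounds their pole order below $m$) and package the uniqueness step through Faber polynomials.
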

\begin{proof}
For the purposes of induction, we assume $f(1,g^k,\tau) = q^{-1} + O(1)$ for all $k<m$.  Then
\[ \begin{aligned}
mT_mf(1,g,\tau) &= \underset{0 \leq b<d}{\sum_{ad=m}} f\left(1,g^a, \frac{a\tau+b}{d}\right) \\
&= f(1,g^m,m\tau) + \sum_{d|m, d < m} \sum_{0 \leq b < d} e(b/d) q^{m/d^2} + O(1) \\
\end{aligned} \]
Since $mT_mf(1,g,\tau)$ is monic of degree $m$ in $f(1,g,\tau)$, the leading term is $q^{-m}$, and all of the other summands have poles of lower order.  By subtracting those summands, we find that the leading term of $f(1,g^m,m\tau)$ is $q^{-m}$, so $f(1,g^m,\tau)$ has leading term $q^{-1}$.  Since $f(1,g^m,\tau)$ is a power series in $q$, it has the form we want.

To show uniqueness, suppose there were some $f'(1,g^m,\tau) = q^{-1} + O(1)$ such that $f'(1,g^m,m\tau) + \sum_{ad=m, d < m, 0 \leq b < d} f(1,g^d,\frac{a\tau+b}{d})$ is monic of degree $m$ in $f(1,g,\tau)$.  Since this sum and $mT_mf(1,g,\tau)$ have the same coefficients in negative degree, $f'(1,g^m,m\tau) - f(1,g^m,m\tau) = O(1)$.  However, this difference must be a polynomial in $f(1,g,\tau)$, so it is constant.
\end{proof}

\begin{lem} \label{lem:Tn-form}
If $f$ is a weakly Hecke-monic function for $(1,g)$, such that $f(1,g^m,\tau) = q^{-1} + O(q)$ for all $m>0$, then $nT_nf(1,g,\tau)$ is the unique polynomial in $f(1,g,\tau)$ whose expansion is $q^{-n} + O(q)$.
\end{lem}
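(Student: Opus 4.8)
\emph{Strategy.} The plan is to use weak Hecke-monicity only to guarantee that $nT_nf(1,g,\tau)$ is a polynomial in $f(1,g,\tau)$, and then to pin this polynomial down by computing the principal part of its $q$-expansion directly from the Hecke formula, finishing with a triangularity argument for uniqueness. First I would specialize the formula for $nT_n$ to the pair $(1,g)$, obtaining
\[ nT_nf(1,g,\tau) = \sum_{ad=n}\ \sum_{0 \leq b < d} f\!\left(1,g^a,\frac{a\tau+b}{d}\right). \]
Writing $q=e(\tau)$ and invoking the hypothesis $f(1,g^a,\sigma) = e(\sigma)^{-1}+O(e(\sigma))$ together with the substitution $e\!\left(\frac{a\tau+b}{d}\right) = e(b/d)\,q^{a/d}$, each summand becomes $e(-b/d)\,q^{-a/d}$ plus strictly positive powers of $q$.

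\emph{The expansion.} Since every term other than the leading one contributes a strictly positive power of $q$, the only nonpositive powers in the whole sum come from the leading terms, whose total contribution is $\sum_{d\mid n}\big(\sum_{b=0}^{d-1} e(-b/d)\big)\, q^{-n/d^2}$, using $a=n/d$. The key computation is the inner geometric sum $\sum_{b=0}^{d-1} e(-b/d)$, which vanishes for $d>1$ and equals $1$ for $d=1$. Hence the entire singular-and-constant part collapses to the single term $q^{-n}$; in particular there is no constant term, and we conclude $nT_nf(1,g,\tau) = q^{-n}+O(q)$. (This also recovers monicity of degree $n$ a posteriori, consistently with the Hecke-monic hypothesis.)

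\emph{Uniqueness.} Because $f(1,g,\tau)=q^{-1}+O(q)$, the powers $1,f,f^2,\dots,f^n$ have $q$-expansions with distinct leading terms $q^0,q^{-1},q^{-2},\dots,q^{-n}$, so their nonpositive parts form a triangular system; consequently a polynomial in $f$ of degree at most $n$ is determined by the nonpositive part of its expansion. Thus if $P_1(f)$ and $P_2(f)$ both expand to $q^{-n}+O(q)$, then $(P_1-P_2)(f)=O(q)$, and a nonzero polynomial in $f$ cannot be $O(q)$ since its leading term is a nonpositive power with nonzero coefficient; hence $P_1=P_2$. Weak Hecke-monicity supplies one such polynomial, namely $nT_nf(1,g,\tau)$, so it is the unique one.

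\emph{Main obstacle.} The only genuinely delicate point is confirming that the positive-power contributions from the various summands, which a priori carry the fractional exponents $ja/d$ with $j\geq 1$, cannot conspire to produce a spurious nonpositive power of $q$. This is immediate: each such exponent is strictly positive, and weak Hecke-monicity already forces the total expression to be a power series in integer powers of $q$. I expect the root-of-unity cancellation $\sum_{b=0}^{d-1} e(-b/d)=0$ for $d>1$ to be the crux of the argument, with the remaining steps amounting to bookkeeping.
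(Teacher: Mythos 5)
Your proposal is correct and follows essentially the same route as the paper: compute the principal part of $nT_nf(1,g,\tau)$ termwise, use the root-of-unity cancellation $\sum_{0\le b<d}e(-b/d)=\delta_{d,1}$ to collapse the nonpositive part to $q^{-n}$, upgrade $O(q^{1/n})$ to $O(q)$ via the fact that a polynomial in $f(1,g,\tau)$ is a series in integer powers of $q$, and conclude uniqueness because any added polynomial would contribute a nonzero nonpositive power. No gaps.
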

\begin{proof}
\[ \begin{aligned}
nT_nf(1,g,\tau) &= \underset{0 \leq b<d}{\sum_{ad=n}} f\left(1,g^a,\frac{a\tau+b}{d}\right) \\
&= \sum_{d|n} \sum_{0\leq b <d} e\left(\frac{-(n/d)\tau-b}{d}\right) + O(q^{1/n}) \\
&= \sum_{d|n} e(-n\tau/d^2) \sum_{0 \leq b < d} e(-b/d) + O(q^{1/n})\\
&= \sum_{d|n} e(-n\tau/d^2) \delta_{d,1} + O(q^{1/n}) \\
&= q^{-n} + O(q^{1/n})
\end{aligned} \]
Since $f(1,g,\tau)$ is a power series in $q$ and $nT_nf(1,g,\tau)$ is a polynomial in $f(1,g,\tau)$, we can refine the $O(q^{1/n})$ to $O(q)$.  If we add any other polynomial in $f(1,g,\tau)$, the leading term will yield a nontrivial contribution to the nonpositive powers in the expansion, so the polynomial is unique.
\end{proof}

\begin{prop}
The map $f^{(m)}(\tau) \mapsto f(1,g^m,\tau)$ induces a bijection between replicable functions of order $N$ and weakly Hecke-monic functions for $(1,g)$ on $\Hom(\Z \times \Z, \Z/N\Z) \underset{\pm \Z}{\times} \HH$ whose expansions at infinity have the form $q^{-1} + O(q)$, where $g$ is a generator of $\Z/N\Z$.
\end{prop}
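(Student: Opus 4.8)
The plan is to exploit the observation that the Hecke-operator formula for the pair $(1,g)$ coincides, summand for summand, with the formula defining the replicates of a replicable power series. Specializing the formula for $nT_n$ proved in Section 1 to $(g,h)=(1,g)$ gives
\[ nT_n f(1,g,\tau) = \sum_{ad=n,\, 0 \leq b < d} f\left(1, g^a, \frac{a\tau+b}{d}\right), \]
while replicability asserts $\Phi_n(f(q)) = \sum_{ad=n,\,0\le b<d} f^{(a)}\!\left(\frac{a\tau+b}{d}\right)$ with $\Phi_n$ the normalized Faber polynomial. Under the identification $f(1,g^m,\tau) = f^{(m)}(\tau)$ the two right-hand sides are literally the same sum, so the proposition reduces to translating the monic-polynomial condition into the Faber-polynomial condition and back. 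I would set up two maps and show they are mutually inverse: forward, send a replicable function of order $N$ with replicates $f^{(m)}$ to the assignment $f(1,g^m,\tau) := f^{(m)}(\tau)$; backward, send a weakly Hecke-monic function to the family $f^{(m)} := f(1,g^m,\tau)$.

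For the forward direction I first check well-definedness: since $g$ generates $\Z/N\Z$ we have $g^m = g^{m+N}$, and the order-$N$ hypothesis $f^{(m)} = f^{(m+N)}$ makes $f(1,g^m,\tau)$ depend only on $m \bmod N$; the $\pm\Z$-equivariance reduces to invariance under $\tau \mapsto \tau+1$, which holds because each $f^{(m)}$ is a power series in $q$. The Remark following the definition of Hecke-monicity permits me to regard the data as just the family $\{f(1,g^i,\tau)\}_{i>0}$, which is exactly what the replicates supply. Weak Hecke-monicity is then immediate from the displayed identity: its right-hand side equals $\Phi_n(f(1,g,\tau))$, and $\Phi_n$ is monic of degree $n$, so $nT_n f(1,g,\tau)$ is a monic degree-$n$ polynomial in $f(1,g,\tau)$. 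Since $f^{(m)} = q^{-1}+O(q)$, the expansions at infinity have the required form.

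For the backward direction I would invoke the uniqueness results already in hand. By hypothesis $f(1,g^m,\tau) = q^{-1}+O(q)$ for all $m$, so Lemma \ref{lem:Tn-form} applies and shows that $nT_n f(1,g,\tau)$ is \emph{the} unique polynomial in $f(1,g,\tau)$ whose expansion is $q^{-n}+O(q)$. Because the normalized Faber polynomial $\Phi_n$ is characterized by precisely this property, the two polynomials agree: $nT_n f(1,g,\tau) = \Phi_n(f(1,g,\tau))$. Combining this with the displayed Hecke formula yields $\Phi_n(f(q)) = \sum_{ad=n,\,0\le b<d} f^{(a)}(\tfrac{a\tau+b}{d})$ with $f^{(a)} := f(1,g^a,\tau)$, which is exactly the replication identity; the order-$N$ periodicity $f^{(m)} = f^{(m+N)}$ comes from $g^N=1$, and uniqueness of replicates (noted in the text) identifies these as the genuine replicates. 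Hence the function is replicable of order $N$, and the two constructions visibly invert one another on the data $\{f^{(m)}\}$, giving the bijection.

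The main obstacle, and the step I would treat most carefully, is the normalization and the precise matching of the two uniqueness statements. On the Hecke-monic side Lemma \ref{lem:unique} pins down each $f(1,g^m,\tau)$ only up to an additive constant; demanding that the expansions be $q^{-1}+O(q)$ rather than merely $q^{-1}+O(1)$ is what rigidifies the correspondence and reconciles it with the replicate normalization $f^{(m)} = q^{-1}+O(q)$, so I would verify that killing these constants is consistent under the Hecke formula. I would also confirm that Lemma \ref{lem:Tn-form} genuinely delivers the Faber polynomial rather than merely some polynomial with the same pole part: the point is that the triangular passage from a polynomial of degree $n$ to the coefficients of $q^{-n},\dots,q^{0}$ in its value on $f(q)=q^{-1}+O(q)$ is a bijection, so the condition ``value $=q^{-n}+O(q)$'' (with vanishing constant term) determines the polynomial uniquely and forces it to equal $\Phi_n$.
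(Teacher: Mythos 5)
Your proposal is correct and follows essentially the same route as the paper: both directions rest on the identification $nT_nf(1,g,\tau)=\sum_{ad=n,\,0\le b<d}f^{(a)}\bigl(\frac{a\tau+b}{d}\bigr)$, with the forward implication coming from monicity of the Faber polynomials and the backward implication from Lemma \ref{lem:Tn-form} forcing $nT_nf(1,g,\tau)=\Phi_n(f)$ and then uniqueness (by induction on divisors) of the replicates. Your extra care about the $O(q)$ versus $O(1)$ normalization is a sensible elaboration of a point the paper handles implicitly through the hypothesis $f(1,g^i,\tau)=q^{-1}+O(q)$.
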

\begin{proof}
We first assume that $f^{(1)}$ is replicable, so $\Phi_n(f^{(1)}) = \sum_{ad=n,0\leq b < d} f^{(a)}\left(\frac{a\tau + b}{d}\right)$.  Then
\[ \begin{aligned}
\Phi_n(f^{(1)}(\tau)) &= \underset{0 \leq b<d}{\sum_{ad=n}} f^{(a)}\left(\frac{a\tau+b}{d}\right) \\
&= \underset{0 \leq b<d}{\sum_{ad=n}} f\left(1, g^a, \frac{a\tau + b}{d}\right) \\
&= nT_nf(1,g,\tau)
\end{aligned} \]
Therefore, $nTnf(1, g, \tau)$ is a monic polynomial in $f(1,g,\tau)$ for all $n$.  $f$ is therefore weakly Hecke-monic for $(1,g)$.

Now, let $f$ be a weakly Hecke-monic function for $(1,g)$ satisfying $f(1,g^i,\tau) = q^{-1} + O(q)$.  By Lemma \ref{lem:Tn-form}, $mT_mf(1,g,\tau) = q^{-m} + O(q)$, and is a monic polynomial in $f(1,g,\tau)$, so it is equal to $q^{-m} + m\sum_k H_{k,m}q^k = \Phi_n(f)$.  If we assume for the purposes of induction that $f(1,g^k,\tau) = f^{(k)}(\tau)$ for all $k|m$, $k \neq m$, then
\[ \begin{aligned}
\underset{0 \leq b<d}{\sum_{ad=m}} f\left(1,g^a,\frac{a\tau+b}{d}\right) &= mT_mf(1,g,\tau) \\
&= \Phi_m(f) \\
&= \underset{0 \leq b<d}{\sum_{ad=m}} f^{(a)}\left(\frac{a\tau+b}{d}\right)
\end{aligned} \]
implies $f^{(m)} = f(1,g^m,\tau)$.
\end{proof}

\begin{cor}
If $f$ is a replicable function of finite order with algebraic integer coefficients, then $f$ is either of trigonometric type or holomorphic congruence genus-zero for a group containing $\Gamma_1(N)$ for some $N$.
\end{cor}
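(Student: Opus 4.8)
The plan is to reduce directly to Theorem \ref{thm:main} through the bijection of the preceding proposition, and then to recover the $\Gamma_1(N)$ refinement by tracking the relevant parameters through the proof of that theorem in the special case at hand.

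First I would fix the finite order $N$, let $g$ generate $\Z/N\Z$, and recall that a replicable function is by definition a normalized series $f=f^{(1)}=q^{-1}+O(q)$; in particular it has a pole at infinity, and by hypothesis its coefficients are algebraic integers. The preceding proposition promotes the family of replicates $\{f^{(m)}\}$ to a weakly Hecke-monic function for $(1,g)$ on $\Hom(\Z\times\Z,\Z/N\Z)\underset{\pm \Z}{\times}\HH$, with $f(1,g,\tau)=f^{(1)}(\tau)=f(\tau)$. Thus $f(1,g,\tau)$ is weakly Hecke-monic for the commuting pair $(1,g)$, has a pole at infinity, and has algebraic integer coefficients, which are exactly the hypotheses of Theorem \ref{thm:main}. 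Applying that theorem yields at once that $f$ is either of trigonometric type or holomorphic congruence genus-zero; this is the entire statement apart from the $\Gamma_1(N)$ assertion.

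To pin down $\Gamma_1(N)$, I would specialize the proof of Theorem \ref{thm:main} to the present situation, in which the first member of the pair is the identity, so that $|g|=1$. Since $f=q^{-1}+O(q)$, the leading term $\zeta q^{C/|g|}$ of Lemma \ref{lem:form} is $q^{-1}$, forcing $\zeta=1$ and $C=-1$; consequently the auxiliary function $f(1,g,-\tau/C)$ equals $f(1,g,\tau)$, the conjugating matrices are trivial, and the group $G'$ coincides with the global symmetry group $G$, so that the quadruple $(G,1,N,-1)$ satisfies properties 1--3. In particular property 2 gives $G_\infty=\langle -Id,\begin{pmatrix}1&1\\0&1\end{pmatrix}\rangle$, so $G$ contains the integer translation $T=\begin{pmatrix}1&1\\0&1\end{pmatrix}$. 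In the non-degenerate case $G$ does not stabilize infinity, whence by the Section 4 lemmata $G$ contains $\Gamma(n)$ for some nonzero multiple $n$ of $NC$. Because $\Gamma_1(n)/\Gamma(n)$ is cyclic, generated by the image of $T$, the group generated by $\Gamma(n)$ and $T$ is precisely $\Gamma_1(n)$; hence $\Gamma_1(n)\subseteq G$, and the claim holds with $N:=n$.

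The main obstacle I anticipate is bookkeeping rather than conceptual: one must confirm that the parameters really come out as $|g|=1$ and $C=-1$, so that no genuine rescaling of $\tau$ intervenes and the stabilizer of infinity contains the honest integer translation $T$, and then check the elementary identity $\langle\Gamma(n),T\rangle=\Gamma_1(n)$. Both the trigonometric branch and the congruence genus-zero branch are delivered by Theorem \ref{thm:main}; the only new content of the corollary is the observation that invariance under translation by $1$ upgrades the $\Gamma(n)$ produced in Section 4 to $\Gamma_1(n)$.
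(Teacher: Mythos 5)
Your proposal is correct and follows essentially the same route as the paper: pass through the bijection of the preceding proposition, apply Theorem \ref{thm:main} to $f(1,g,\tau)$, and then upgrade $\Gamma(n)$ to $\Gamma_1(n)$ using invariance under $\tau\mapsto\tau+1$. The paper justifies that last step more directly (a $q$-series is automatically invariant under integer translation, and $\langle\Gamma(n),\begin{pmatrix}1&1\\0&1\end{pmatrix}\rangle=\Gamma_1(n)$), whereas you re-derive the same translation from the parameters $|g|=1$, $C=-1$ inside the proof of the theorem; both are fine.
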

\begin{proof}
By the above proposition, $f$ together with its replicates forms a weakly Hecke-monic function for $(1,g)$, where $g$ generates a cyclic group whose order is that of $f$.   By Theorem \ref{thm:main}, $f(1,g,\tau)$ is either of trigonometric type or holomorphic congruece genus-zero and invariant under a group containing $\Gamma(N)$ for some $N$.  Since $f$ is invariant under translation by $1$, it is invariant under $\Gamma_1(N)$.
\end{proof}

Norton also defined a stronger notion in \cite{N84}: $f$ is completely replicable if all $f^{(t)}$ are replicable, or equivalently, if the $s$-th replication power of $f^{(t)}$ is $f^{(st)}$ for all $s$ and $t$ (see Proposition 2.5 in \cite{K94}).  He also pointed out that $-J(z+1/2) = q^{-1} + 196884q - 21493760q^2 + \dots$ is a function that is replicable but not completely replicable.

\begin{cor}
The above bijection specializes to a bijection between completely replicable functions of order $N$ and semi-weakly Hecke-monic functions for $(1,g)$ on $\Hom(\Z \times \Z, \Z/N\Z) \underset{\pm \Z}{\times} \HH$ whose expansions at infinity have the form $q^{-1} + O(q)$.
\end{cor}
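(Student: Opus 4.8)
The plan is to unwind both sides of the asserted correspondence into statements about the family $\{f(1,g^a,\tau)\}_{a>0}$ and match them term by term, reusing the formula for $nT_n$ together with Lemma \ref{lem:Tn-form}. First I would note that for the pair $(1,g)$ the semi-weak condition simplifies drastically: since the first entry is trivial, the pairs $(1^d, 1^{-b}g^a) = (1,g^a)$ depend only on $a$, so being semi-weakly Hecke-monic for $(1,g)$ is precisely the assertion that $f$ is weakly Hecke-monic for $(1,g^a)$ for every $a>0$. In particular the case $a=1$ recovers weak Hecke-monicity, so any semi-weakly Hecke-monic $f$ already lies in the domain of the preceding proposition, and the restriction of the established bijection makes sense. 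On the replicable side I would use the equivalent formulation of complete replicability recorded above, namely that each $f^{(a)}$ is itself replicable with $s$-th replicate equal to $f^{(as)}$.

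The bridge between the two descriptions is the specialization of the Hecke operator
\[ nT_n f(1,g^a,\tau) = \underset{0 \leq b < d}{\sum_{sd=n}} f\left(1, g^{as}, \frac{s\tau+b}{d}\right), \]
obtained by plugging $(1,g^a)$ into the defining sum. Under the bijection $f(1,g^m,\tau) = f^{(m)}(\tau)$ the right-hand side becomes $\sum_{sd=n,\,0\leq b<d} f^{(as)}\!\left(\frac{s\tau+b}{d}\right)$, which is exactly the sum appearing in the replicability of $f^{(a)}$ once its $s$-th replicate is identified with $f^{(as)}$.

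For the forward implication I would assume $f$ completely replicable. Then $f^{(a)}$ is replicable with $s$-th replicate $f^{(as)}$, so the displayed sum equals the Faber polynomial $\Phi_n(f^{(a)})$, a monic polynomial of degree $n$ in $f^{(a)} = f(1,g^a,\tau)$; hence $f$ is weakly Hecke-monic for each $(1,g^a)$, i.e.\ semi-weakly Hecke-monic for $(1,g)$. For the converse I would assume semi-weak Hecke-monicity, so $f$ is weakly Hecke-monic for $(1,g^a)$ and every $f(1,g^{am},\tau)$ has the form $q^{-1}+O(q)$. Applying Lemma \ref{lem:Tn-form} with the generator $g^a$ in place of $g$ identifies $nT_n f(1,g^a,\tau)$ as the unique monic polynomial in $f(1,g^a,\tau)$ with expansion $q^{-n}+O(q)$, which is $\Phi_n(f^{(a)})$. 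Reading the displayed identity backwards then exhibits $f^{(a)}$ as replicable with $s$-th replicate $f^{(as)}$ for every $a$, so $f$ is completely replicable. Since both implications pass through the already-established bijection, its restriction to these two subclasses is again a bijection.

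The one point requiring care — and what I expect to be the main obstacle — is the legitimacy of invoking Lemma \ref{lem:Tn-form} with $g^a$ playing the role of $g$: one must verify that weak Hecke-monicity for $(1,g^a)$ together with the normalization $q^{-1}+O(q)$ of every $f(1,g^{am},\tau)$ are exactly the hypotheses of that lemma, so that its uniqueness clause forces $nT_n f(1,g^a,\tau)$ to be the Faber polynomial and thereby pins down the replicates as $f^{(as)}$. The remaining work is the bookkeeping needed to keep the $\Z/N\Z$-indexing consistent, since the subgroup generated by $g^a$ may be proper while the relation $f^{(m)}=f^{(m+N)}$ keeps all the relevant replicates well defined.
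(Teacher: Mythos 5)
Your proposal is correct and follows essentially the same route as the paper: the paper's proof is a terse chain of equivalences (completely replicable $\Leftrightarrow$ each $f^{(m)}$ replicable $\Leftrightarrow$ weakly Hecke-monic for all $(1,g^m)$ $\Leftrightarrow$ semi-weakly Hecke-monic for $(1,g)$), obtained by applying the preceding proposition componentwise, and your argument just spells out the middle equivalence via Lemma \ref{lem:Tn-form} and the Faber-polynomial identification. The point you flag as the main obstacle (applying Lemma \ref{lem:Tn-form} with $g^a$ in place of $g$) is indeed exactly what the paper's appeal to "the proposition" silently does, and it goes through under the stated hypotheses.
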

\begin{proof}
Using the proposition, we get a chain of equivalent statements:
\begin{enumerate}
\item $f^{(1)}$ is completely replicable.
\item $f^{(m)}$ is replicable for all $m$.
\item $f$ is weakly Hecke-monic for all $(1,g^m)$.
\item $f$ is semi-weakly Hecke-monic for $(1,g)$.
\end{enumerate}
\end{proof}

\begin{cor}
The above bijection specializes to a bijection between completely replicable functions $f^{(1)}$ with rational integer coefficients invariant under $\Gamma_0(N)$ and Hecke-monic functions $f$ on $\M^{\Z/N\Z}_{Ell}$ satisfying the property that the $q$-expansions of $f(1,g^i,\tau)$ have the form $q^{-1} + O(q)$, with rational integer coefficients.
\end{cor}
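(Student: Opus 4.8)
The plan is to bootstrap from the preceding corollary, which gives a bijection between completely replicable functions of order $N$ and semi-weakly Hecke-monic functions for $(1,g)$ on $\Hom(\Z \times \Z, \Z/N\Z) \underset{\pm\Z}{\times} \HH$, sending the replicate $f^{(i)}$ to $f(1,g^i,\tau)$. Under this bijection the rational-integer condition and the normalization $q^{-1}+O(q)$ of each $f^{(i)}$ pass directly between the two sides, so the entire problem is to characterize which of these functions lift to an $SL_2(\Z)$-equivariant, hence Hecke-monic, function on the full stack $\M^{\Z/N\Z}_{Ell}$, and to show that this lift exists exactly when $f^{(1)}$ is invariant under $\Gamma_0(N)$. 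Using the identification $\M^{\Z/N\Z}_{Ell} \cong \Hom(\Z \times \Z, \Z/N\Z) \underset{SL_2(\Z)}{\times} \HH$ of Section 1, I would first record that the $SL_2(\Z)$-orbits on commuting pairs are indexed by the divisors $e \mid N$ through $\gcd$, and that each orbit already contains a first-coordinate-trivial representative $(1,g^e)$. Thus the replicate data meets every component, and a Hecke-monic stack function is precisely this data equipped with a consistent equivariant structure.

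Next I would analyze the primitive component, the orbit of $(1,g)=(0,1)$. Its stabilizer in $SL_2(\Z)$ reduces mod $N$ to the stabilizer of $(0,1)$, namely the image of $\Gamma_1(N)$, so equivariance over this orbit automatically yields exactly $\Gamma_1(N)$-invariance of $f^{(1)}$ --- which we already possess, since Theorem \ref{thm:main} and its corollary place the integral series $f^{(1)}$ in the genus-zero, $\Gamma_1(N)$-invariant case (the trigonometric case being excluded by $\Gamma_0(N)$-invariance, whose symmetry group does not stabilize infinity). The remaining cosets $\Gamma_0(N)/\Gamma_1(N) \cong (\Z/N\Z)^\times$ carry $(0,1)$ to the points $(0,d)$ with $\gcd(d,N)=1$, and equivariance then reads $f^{(d)} = f^{(1)} \circ M_d$ for a representative $M_d \in \Gamma_0(N)$. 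The crux is that this is compatible with the replicate data exactly when $f^{(d)} = f^{(1)}$, which in turn is equivalent to $\Gamma_0(N)$-invariance of $f^{(1)}$. For the forward implication I would invoke the identity $nT_nf(1,g,\tau) = \Phi_n(f^{(1)})$ established in the proposition above: for a prime $p \nmid N$ the classical Hecke operator applied to the $\Gamma_0(N)$-invariant function $f^{(1)}$ is the unique monic polynomial of the form $q^{-p}+O(q)$ in $f^{(1)}$ guaranteed by Lemma \ref{lem:Tn-form}, and comparing its distinguished summand $f^{(1)}(p\tau)$ with the equivariant summand $f^{(p)}(p\tau)$ forces $f^{(p)} = f^{(1)}$; complete replicability and multiplicativity of replication then extend this to all $d$ coprime to $N$.

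I would then propagate the argument to every divisor component: each $f^{(e)}$ is itself completely replicable of order $N/e$ with rational integer coefficients, so the same reasoning applied to $g^e$ promotes its automatic $\Gamma_1(N/e)$-invariance to $\Gamma_0(N/e)$-invariance and identifies $f^{(i)}$ for $\gcd(i,N)=e$ with the appropriate $SL_2(\Z)$-translates of $f^{(e)}$. This is exactly the consistency needed on the orbit of $(1,g^e)$, so the data assembles into a single-valued function on $\M^{\Z/N\Z}_{Ell}$; its Hecke-monicity is then automatic, since the monic-polynomial relations defining semi-weak Hecke-monicity are $SL_2(\Z)$-equivariant and so hold on all components once they hold on the first-coordinate-trivial ones. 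The converse direction runs this in reverse: restricting a Hecke-monic stack function recovers a semi-weakly Hecke-monic, hence completely replicable, function, and its built-in equivariance $f^{(d)} = f^{(1)} \circ M_d$ together with the genus-zero structure forces $f^{(1)}$ to be $\Gamma_0(N)$-invariant.

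The step I expect to be the main obstacle is precisely this matching of the geometric descent with the arithmetic of $\Gamma_0(N)$. Bare $SL_2(\Z)$-equivariance over the stack delivers only $\Gamma_1(N)$-invariance, so the real work is to show that the replication recursion forces the unit-power replicates to collapse onto $f^{(1)}$ exactly under $\Gamma_0(N)$-invariance --- that is, to pin the diamond operators $\Gamma_0(N)/\Gamma_1(N)$ against the replication operation --- and then to check that this collapse propagates coherently through all the divisor-indexed components rather than only the primitive one. Controlling the behavior at the non-infinite cusps reached by the $M_d$, so that the normalization $q^{-1}+O(q)$ is preserved under these identifications, is the delicate part.
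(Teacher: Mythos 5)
Your proposal takes a genuinely different route from the paper. The paper's proof is two sentences long: it observes that it suffices to show that $\Gamma_0(N)$-invariance of $f^{(1)}$ implies $\Gamma_0(N/(m,N))$-invariance of every replicate $f^{(m)}$, and then cites the exhaustive enumeration of completely replicable functions with integer coefficients in \cite{ACMS92} together with the determination of their fixing groups in \cite{F93}. In other words, the paper does not give an internal argument at all; it leans on the classification. Your framing of what must be proved is essentially the correct unpacking of the paper's ``it suffices to show'' step: the $SL_2(\Z)$-orbits on $(\Z/N\Z)^2$ are indexed by $e=\gcd(i,N)$, each orbit meets the locus $(1,g^e)$, the stabilizer of $(1,g^e)$ reduces to $\Gamma_1(N/e)$, and descent to the stack amounts to promoting $\Gamma_1$- to $\Gamma_0$-invariance and collapsing the unit-power replicates. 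If you could carry out your internal argument, it would actually be an improvement on the paper, which is why the gap matters.

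The gap is at the step you yourself flag as the crux. To force $f^{(p)}=f^{(1)}$ for $p\nmid N$ you compare $pT_pf(1,g,\tau)=f^{(p)}(p\tau)+\sum_b f^{(1)}\bigl(\frac{\tau+b}{p}\bigr)$, which Lemma \ref{lem:Tn-form} shows is a polynomial in $f^{(1)}$ of the form $q^{-p}+O(q)$, against the classical Hecke sum $f^{(1)}(p\tau)+\sum_b f^{(1)}\bigl(\frac{\tau+b}{p}\bigr)$. The subtraction argument works only if the latter is \emph{also} a polynomial in $f^{(1)}$, and that is not a formal consequence of $\Gamma_0(N)$-invariance: one must show that $T_p^{\mathrm{cl}}f^{(1)}$ is invariant under the full fixing group $\Gamma_{f^{(1)}}$ (which may strictly contain $\Gamma_0(N)$, e.g.\ via Atkin--Lehner involutions whose double cosets need not be respected by $T_p^{\mathrm{cl}}$) and that its poles on the compactified quotient lie only at cusps equivalent to $\infty$. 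Controlling the expansions at the other cusps is exactly the hard part of the genus-zero problem, and nothing established earlier in the paper hands it to you. The same issue recurs when you propagate to the divisor components: the stack structure only gives $\Gamma_1(N/e)$-invariance of $f^{(e)}$ for free, and your claim that ``the same reasoning'' upgrades this to $\Gamma_0(N/e)$-invariance presupposes the unproved forward step for each $f^{(e)}$. (The converse direction has a parallel problem: equivariance gives $f^{(d)}=f^{(1)}\circ\gamma_d$ for some $\gamma_d\in\Gamma_0(N)$, and concluding $f^{(d)}=f^{(1)}$ from this is circular without the invariance you are trying to establish.) The paper escapes all of this by citing the classification; your argument, as written, does not.
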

\begin{proof}
It suffices to show that if $f^{(1)}$ is invariant under $\Gamma_0(N)$, then $f^{(m)}$ is invariant under $\Gamma_0(N/(m,N))$.  The completely replicable functions with integer coefficients were exhaustively enumerated in \cite{ACMS92}, and their fixing groups were found to obey this condition in \cite{F93}.
\end{proof}

Replicable functions without a specified order also have an interpretation in terms of Hecke-monicity, if we allow our group $G$ to be infinite.  If we let $g$ generate a copy of $\Z$, we can think of replicable functions together with their replicable powers as weakly Hecke-monic functions for $(1,g)$ on $\Hom(\Z \times \Z, \Z) \underset{\pm \Z}{\times} \HH$.  Unfortunately, the finite order condition is essential for our techniques to produce a genus-zero statement.

\section{Twisted denominator formulas}

Given a Lie algebra of a rather specialized form described below, we can make strong statements about certain characters of automorphisms acting on homology.  When this Lie algebra arises from conformal field theory in a certain way, we show that in fact the characters are holomorphic congruence genus-zero functions.  The particular constraints on the Lie algebra force it to be ``mostly free'' in the sense that its higher homology is very small.  This is somewhat related to work  \cite{J98} on free Lie subalgebras of generalized Kac-Moody algebras like the monster Lie algebra.  Some connections to elliptic cohomology appear in unpublished work of Lurie concerning exponential operations on elliptic $\lambda$-rings \cite{L05}.

Let $G$ be a finite group, and let $g$ be an element of order $N$ in the center of $G$.  Suppose we have a collection $\mathcal{V} = \{ V^{i,j/N}_k | i, j \in \Z/N\Z, k \in \frac{1}{N}\Z \}$ of $G$-modules, satisfying the following properties:
\begin{itemize}
\item The action of $g$ on $V^{i,j/N}_k$ is given by constant multiplication by the root of unity $e(j/N)$.
\item $\dim V^{i,j/N}_k$ grows subexponentially with $k$, i.e., for any $\epsilon>0$, there is some $C>0$ such that for all $i,j,k$, we have $\dim V^{i,j/N}_k < Ce^{\epsilon k}$.
\end{itemize}

\noindent\textbf{Note:} We will occasionally write $V^{i,j/N}_k$ where $i$ and $j$ are given as integers.  This means we are tacitly reducing modulo $N$, so $V^{i,j/N}_k$ is the same $G$-module as $V^{i+aN,j/N+b}_k$ for all integers $a$ and $b$.

\begin{defn}
A complex Lie algebra $E$ is Fricke compatible with $\mathcal{V}$ if:
\begin{enumerate}
\item $E$ is graded by $\Z_{> 0} \times \frac{1}{N}\Z$, with finite dimensional homogeneous components $E_{i,j}$.  We introduce degree indicator symbols $p$ and $q$, which denote grading shifts by $(1,0)$ and $(0,\frac{1}{N})$, respectively, and write the graded vector space decomposition as $E = \bigoplus_{i>0, j \in \frac{1}{N}\Z} E_{i,j} p^i q^j$.  We can view this as a character decomposition of $E$ under an action of a two dimensional torus $H$.
\item $E$ admits a homogeneous action of $G$ by Lie algebra automorphisms, such that we have $G$-module isomorphisms $E_{i,j} \cong V^{i,j}_{1+ij}$
\item The homology of $E$ is given by:
\begin{itemize}
\item $H_0(E) = \C$
\item $H_1(E) = \bigoplus_{n \in \frac{1}{N}\Z} V^{1,n}_{1+n} pq^n$
\item $H_2(E) = p \bigoplus_{m=0}^\infty V^{1,-1/N}_{1-1/N} \otimes V^{m,1/N}_{1+m/N} p^m$
\item $H_i(E) = 0$ for $i>2$.
\end{itemize}
\item $E_{1,-1/N} \cong V^{1,-1/N}_{1-1/N}$ is one dimensional.
\end{enumerate}
\end{defn}

\noindent\textbf{Remark:} Our use of the term ``Fricke compatible'' is motivated by considerations from conformal field theory.  If $g$ is a Fricke element of the monster, i.e., if the McKay-Thompson series $T_g(\tau) = \mathrm{Tr}(gq^{L_0 - 1}|V^\natural)$ is invariant under the transformation $\tau \mapsto -1/N\tau$ for some $N$, and if $G$ is a central extension of the centralizer of $g$ in the monster, then we expect the Lie algebra of physical states of the $g$-orbifold intertwining algebra to be a generalized Kac-Moody algebra whose positive subalgebra is isomorphic to $E$ as a Lie algebra with a homogeneous action of $G$ by automorphisms, and we expect the unique irreducible $g$-twisted module of $V^\natural$ to be isomorphic to $H_1(E)$ as a graded $G$-module.  If $g$ is a non-Fricke element, then we expect the compatible Lie algebra to have a large abelian subalgebra and higher homology described by its exterior powers.  We explore this further in \cite{C09} and \cite{C10}.

\begin{prop} (Twisted denominator formula) Suppose $E$ is Fricke compatible with $\mathcal{V}$.  Then for any $h \in G$,
\[ \begin{aligned}
p^{-1} &+ \sum_{m > 0} \mathrm{Tr}(h|V^{1,-1/N}_{1-1/N}) \mathrm{Tr}(h|V^{m,1/N}_{1+m/N}) p^m - \sum_{n \in \frac{1}{N}\Z} \mathrm{Tr}(h|V^{1,n}_{n+1}) q^n \\
&= p^{-1} \exp \left( - \sum_{i > 0} \sum_{m > 0, n \in \frac{1}{N}\Z} \mathrm{Tr}(h^i | V^{m,n}_{1+mn})p^{im}q^{in}/i \right)
\end{aligned} \]
\end{prop}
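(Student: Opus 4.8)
The plan is to read the claimed identity as the $h$-twisted Euler--Poincar\'e relation for the Chevalley--Eilenberg complex $\Lambda^\ast E$ computing the Lie algebra homology $H_\ast(E)$, packaged as a generating function in the grading indicators $p$ and $q$. For a $\Z_{>0}\times\frac1N\Z$-graded $G$-module $W=\bigoplus_{a,b}W_{a,b}p^aq^b$ I write the twisted graded character $\mathrm{Tr}(h|W):=\sum_{a,b}\mathrm{Tr}(h|W_{a,b})p^aq^b$; this is additive on direct sums, multiplicative on tensor products, and on a module $V$ placed in a single bidegree $(a,b)$ it satisfies $\sum_{k\ge 0}(-1)^k\mathrm{Tr}(h|\Lambda^kV)(p^aq^b)^k=\det(1-p^aq^bh|V)$. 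The whole proposition then amounts to $p^{-1}$ times the Euler--Poincar\'e identity
\[ \sum_{i\ge 0}(-1)^i\mathrm{Tr}(h|\Lambda^iE)=\sum_{i\ge 0}(-1)^i\mathrm{Tr}(h|H_i(E)), \]
so the work is to set this up rigorously and then to evaluate each side.

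First I would fix the ambient ring and justify Euler--Poincar\'e. Because $E$ is supported in $p$-degrees $\ge 1$, any fixed bidegree $(a,b)$ receives contributions from $\Lambda^kE$ only for $k\le a$, so the Chevalley--Eilenberg complex is a finite complex in each $p$-degree; finite-dimensionality of each graded piece, and hence the passage from the complex to its homology at the level of graded characters, follows from $\dim E_{m,n}=\dim V^{m,n}_{1+mn}$ together with the subexponential growth hypothesis, which confines the nonzero $V^{m,n}_{1+mn}$ to a cone and makes the relevant sums and infinite products converge as Laurent series in $q^{1/N}$ (indeed as holomorphic functions on a punctured disc). Carrying out this convergence bookkeeping, and checking that Euler--Poincar\'e applies verbatim in the resulting completed Grothendieck ring, is the step I expect to be the main obstacle; everything past it is formal.

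Next I would evaluate the Koszul side. Multiplicativity of $\Lambda^\ast$ over $E=\bigoplus_{m>0,n}E_{m,n}$ together with the determinant identity gives
\[ \sum_{i\ge 0}(-1)^i\mathrm{Tr}(h|\Lambda^iE)=\prod_{m>0,\,n}\det(1-p^mq^nh|E_{m,n}). \]
Applying $\log\det(1-M)=-\sum_{i\ge 1}\mathrm{Tr}(M^i)/i$ factor by factor and substituting $E_{m,n}\cong V^{m,n}_{1+mn}$ from condition (2) turns this product into
\[ \exp\left(-\sum_{i>0}\sum_{m>0,\,n\in\frac1N\Z}\frac{\mathrm{Tr}(h^i|V^{m,n}_{1+mn})}{i}\,p^{im}q^{in}\right), \]
which is exactly the exponential on the right-hand side; multiplying by the normalizing factor $p^{-1}$ reproduces the claimed right-hand side verbatim.

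Finally I would evaluate the homology side. Since $H_i(E)=0$ for $i>2$,
\[ \sum_{i\ge 0}(-1)^i\mathrm{Tr}(h|H_i(E))=\mathrm{Tr}(h|H_0)-\mathrm{Tr}(h|H_1)+\mathrm{Tr}(h|H_2), \]
where $\mathrm{Tr}(h|H_0)=1$ from $H_0=\C$, $\mathrm{Tr}(h|H_1)=\sum_n\mathrm{Tr}(h|V^{1,n}_{1+n})pq^n$, and, reading the bidegree of the $m$-th summand of $H_2$ off its indicator $p\cdot p^m$ (with $q$-degree $0$) and using multiplicativity on the tensor factors, $\mathrm{Tr}(h|H_2)=\sum_{m}\mathrm{Tr}(h|V^{1,-1/N}_{1-1/N})\mathrm{Tr}(h|V^{m,1/N}_{1+m/N})p^{m+1}$. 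Multiplying by $p^{-1}$ sends these three contributions to $p^{-1}$, to $-\sum_n\mathrm{Tr}(h|V^{1,n}_{n+1})q^n$, and to $\sum_{m}\mathrm{Tr}(h|V^{1,-1/N}_{1-1/N})\mathrm{Tr}(h|V^{m,1/N}_{1+m/N})p^m$, which already matches the left-hand side except at the lowest degree. The one delicate point is this $m=0$ term: since $H_2$ is a subquotient of $\Lambda^2E$ it is supported in $p$-degree $\ge 2$, so its graded piece in $p$-degree $m+1$ vanishes for $m=0$; equivalently $V^{1,-1/N}_{1-1/N}\otimes V^{0,1/N}_1=0$, consistent with $E$ being supported in positive $p$-degree and with condition (4). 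Hence the $H_2$ contribution effectively runs over $m>0$, the two evaluations agree, and the proposition follows.
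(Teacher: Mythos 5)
Your proof is correct and follows essentially the same route as the paper: the Chevalley--Eilenberg identity $\sum_i(-1)^i\Lambda^iE=\sum_i(-1)^iH_i(E)$ in a completed ring of virtual $H\times G$-characters, with the exterior-algebra side evaluated by the exponential trace formula (your $\log\det(1-M)=-\sum_i\mathrm{Tr}(M^i)/i$ is exactly the paper's Adams-operation identity $\bigwedge(U)=\exp(-\sum_i\psi^i(U)/i)$) and convergence secured by the half-space support of the grading. Your explicit treatment of the $m=0$ term of $H_2$ is a welcome extra detail that the paper passes over silently.
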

\begin{proof}
This is essentially identical to section 8 in \cite{B92}.  The Chevalley-Eilenberg resolution yields the equation $H(E) = \bigwedge(E)$ of virtual $H \times G$-representations, and the left side is given by taking traces on the homology groups given above.  By Adams' exponential formula from $K$-theory, we have
\[ \bigwedge(U) = \exp\left(-\sum_{i>0} \frac{\psi^i(U)}{i}\right) \]
for any finite dimensional $H \times G$-module $U$ (which we take to be the homogeneous components $E_{i,j}$ or finite sums thereof).  The $\psi^i$ are the $i$th Adams operations, which satisfy the identity $\mathrm{Tr}(g|\psi^i(U)) = \mathrm{Tr}(g^i|U)$.  The right side of the equation is then given by extending this to a formal sum on the infinite dimensional direct sum of homogeneous components, and this is allowed because their degrees are supported in a strict half-space.
%The formula with traces can be proved by taking Chern roots (i.e., using the splitting principle) and noting that Adams operations act by tensor powers on virtual representations of rank one.
\end{proof}

For any $h \in G$, we define ``formal orbifold partition functions'':
\[ Z(g^k,g^lh^m,\tau) := \sum_{n \in \frac{1}{N}\Z} \underset{n \in kr+\Z}{\sum_{r \in \frac{1}{N}\Z/\Z}} \text{Tr}(g^lh^m|V^{k,r}_{1+n}) e(n\tau) \]
We refer to the collection of these functions as $Z$, and they converge on $\HH$, by the subexponential growth condition.  We can then define equivariant Hecke operators: 
\[ T_n Z(g,h,\tau) = \frac{1}{n} \underset{0 \leq b<d}{\sum_{ad=n}} Z\left(g^d, g^{-b}h^a, \frac{a\tau+b}{d}\right) \]
\begin{prop}
Suppose $E$ is Fricke compatible with $\mathcal{V}$.  Then $Z$ is weakly Hecke-monic for $(g,h)$.
\end{prop}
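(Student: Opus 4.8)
The plan is to read the weak Hecke-monicity relation directly off the twisted denominator formula just proven, imitating the way Borcherds deduced complete replicability in section 8 of \cite{B92}, but carrying along the scalar action of the central element $g$. First I identify the constituents of the formula. The $q$-series $\sum_{n \in \frac{1}{N}\Z}\mathrm{Tr}(h|V^{1,n}_{n+1})q^n$ on the left is exactly $Z(g,h,\tau)$, because for each grading value $\nu\in\frac1N\Z$ there is a unique $r\in\frac1N\Z/\Z$ with $\nu\equiv r\pmod\Z$, so the defining double sum for $Z(g,h,\tau)$ collapses to this single series. Abbreviating the $p$-series on the left as $A_h(p)=p^{-1}+\sum_{m>0}\mathrm{Tr}(h|V^{1,-1/N}_{1-1/N})\,\mathrm{Tr}(h|V^{m,1/N}_{1+m/N})\,p^m$, the denominator formula reads
\[ A_h(p)-Z(g,h,\tau)=p^{-1}\exp\left(-\sum_{i>0}\frac1i\sum_{m>0,\ \mu\in\frac1N\Z}\mathrm{Tr}(h^i|V^{m,\mu}_{1+m\mu})\,p^{im}q^{i\mu}\right), \]
all series converging by the subexponential growth hypothesis on $\mathcal V$.

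The central step is to take $-\log$ of both sides and compare coefficients of $p^n$. Since $p\,A_h(p)=1+O(p^2)$, the product $p(A_h(p)-Z(g,h,\tau))=1-p\,Z(g,h,\tau)+O(p^2)$ has constant term $1$, so its logarithm is a genuine power series in $p$ with $q$-series coefficients. Expanding $-\log(1-X)=\sum_{k\ge1}X^k/k$ with $X=p\,Z(g,h,\tau)-\sum_{m\ge2}\beta_m p^m$, where the $\beta_m$ are the $q$-independent scalars coming from $A_h$, the coefficient of $p^n$ is a polynomial of degree exactly $n$ in $Z(g,h,\tau)$ whose leading term $\frac1n Z(g,h,\tau)^n$ is the unique contribution of $(p\,Z)^n$ and whose lower terms have purely scalar coefficients. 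On the right, the coefficient of $p^n$ in the exponent comes from the constraint $im=n$: setting $a=i$ and $d=m=n/i$ gives $\sum_{ad=n}\frac1a\sum_{\mu}\mathrm{Tr}(h^a|V^{d,\mu}_{1+d\mu})\,e(a\mu\tau)$.

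It remains to recognize this last sum as $T_nZ(g,h,\tau)=\frac1n\cdot nT_nZ(g,h,\tau)$, and here the central element enters decisively. Because $g$ acts on $V^{d,r}_\bullet$ by the scalar $e(r)$, we have $\mathrm{Tr}(g^{-b}h^a|V^{d,r}_{1+\nu})=e(-br)\,\mathrm{Tr}(h^a|V^{d,r}_{1+\nu})$, so expanding $Z(g^d,g^{-b}h^a,\frac{a\tau+b}{d})$ and summing over $0\le b<d$ produces the character sum $\sum_b e(b(\frac\nu d-r))$. Using $\nu\equiv dr\pmod\Z$ this collapses to the value $d$ precisely when $\nu=d\mu$ with $\mu\equiv r\pmod\Z$, and annihilates every other term; after the $\frac1d$ homothety one obtains $nT_nZ(g,h,\tau)=\sum_{ad=n}d\sum_\mu\mathrm{Tr}(h^a|V^{d,\mu}_{1+d\mu})\,e(a\mu\tau)$, which is exactly $n$ times the coefficient of $p^n$ on the right. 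Combining the two coefficient computations yields $nT_nZ(g,h,\tau)=Z(g,h,\tau)^n+(\text{lower-order polynomial in }Z(g,h,\tau))$, a monic polynomial of degree $n$, which is the definition of weak Hecke-monicity for $(g,h)$; equivalently, $nT_nZ$ is the $n$-th Faber polynomial of $Z(g,h,\tau)$ in the sense of section 5.

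The main obstacle is the bookkeeping of this last paragraph: one must verify that the Adams-operation twisting built into the denominator formula, which uniformly sends $h\mapsto h^i$, reproduces the asymmetric equivariant Hecke summands $Z(g^d,g^{-b}h^a,\cdot)$, in which the first slot is the twisted sector $g^d$ while the second slot carries the extra factor $g^{-b}$. The reconciliation rests entirely on $g$ being central and acting by the root of unity $e(r)$, so that its contribution is a phase that reorganizes into the character sum over $b$ and the sector shift from $V^{1,\bullet}$ to $V^{d,\bullet}$; once this phase accounting is in place, the identification with $T_nZ$ is forced. I would also record that the leading $p^{-1}$ keeps $A_h(p)-Z(g,h,\tau)$ invertible and that the growth bound rules out convergence pathologies, so the formal logarithm and coefficient extraction are legitimate operations on holomorphic functions.
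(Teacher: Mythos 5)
Your proposal is correct and follows essentially the same route as the paper: multiply the twisted denominator formula by $p$, take logarithms, extract the coefficient of $p^n$ on each side, and use the fact that the central element $g$ acts on $V^{d,r}_\bullet$ by the scalar $e(r)$ so that the character sum over $0\le b<d$ converts the divisor sum $\sum_{ad=n}\frac1a\sum_\mu\mathrm{Tr}(h^a|V^{d,\mu}_{1+d\mu})e(a\mu\tau)$ into the equivariant Hecke sum $T_nZ(g,h,\tau)$. The only cosmetic difference is that you run the phase bookkeeping backwards (expanding $nT_nZ$ and collapsing by the character sum) where the paper inserts $\frac1d\sum_b e(-br)e(br)$ forwards; the content is identical.
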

\begin{proof}
We multiply both sides of the twisted denominator formula by $p$, and viewing the equality as an identification of formal expansions, we take logarithms.

\[ \begin{aligned}
 \log &\left(1 - p \sum_{n \in \frac{1}{N}\Z} \text{Tr}(h|V^{1,n}_{1+n}) q^n + \sum_{m > 0} \text{Tr}(h|V^{1,-1/N}_{1-1/N}) \text{Tr}(h|V^{m,1/N}_{1+m/N}) p^{m+1} \right) \\
&= - \sum_{i > 0} \sum_{m > 0, n \in \frac{1}{N}\Z} \text{Tr}(h^i | V^{m,n}_{1+mn})p^{im}q^{in}/i \\
&= - \sum_{m > 0} \sum_{a|m} \frac{1}{a} \sum_{n \in \frac{1}{N}\Z} \text{Tr}(h^a | V^{m/a,n}_{1+mn/a}) p^m q^{an} \\
&= - \sum_{m > 0} \sum_{ad=m} \frac{1}{a} \sum_{0 \leq b < d} \frac{1}{d} \sum_{n \in \frac{1}{N}\Z} \text{Tr}(h^a | V^{d,n}_{1+dn}) p^m q^{an} \\
&= - \sum_{m > 0} \sum_{ad=m} \frac{1}{a} \sum_{0 \leq b < d} \frac{1}{d} \sum_{n \in \frac{1}{N}\Z} \underset{n \in dr+\Z}{\sum_{r \in \frac{1}{N}\Z/\Z}} e(-br) \text{Tr}(h^a | V^{d,r}_{1+n}) e(br) q^{an/d} p^m \\
&= - \sum_{m > 0} \frac{1}{m} \underset{0 \leq b < d}{\sum_{ad=m}} \sum_{n \in \frac{1}{N}\Z} \underset{n \in dr+\Z}{\sum_{r \in \frac{1}{N}\Z/\Z}} \text{Tr} (g^{-b}h^a | V^{d,r}_{1+n}) e\left(n\frac{a\tau + b}{d}\right) p^m \\
&= -\sum_{m > 0} \frac{1}{m} \underset{0 \leq b < d}{\sum_{ad=m}} Z\left(g^d, g^{-b}h^a,\frac{a\tau+b}{d}\right) p^m \\
&= -\sum_{m > 0} T_m Z(g,h, \tau) p^m
\end{aligned} \]

Isolating the terms that are degree $k$ in $p$ on the first line yields a polynomial of degree $k$ in $Z(g,h,\tau)$, with leading coefficient $-1/k$.  This implies that for all $k$, $kT_kZ(g,h,\tau)$ as a formal $q$-series is a monic polynomial of degree $k$ in the $q$-expansion of $Z(g,h,\tau)$.  All of the formal orbifold partition functions uniquely define holomorphic functions on $\HH$, so for all $k$, $kT_kZ(g,h,\tau)$ is a monic polynomial of degree $k$ in $Z(g,h,\tau)$, where they are viewed as functions on $\HH$.
\end{proof}

We describe a connection to generalized moonshine.  Recall that one of the key hypotheses in the conjecture was the existence of certain representations of central extensions of centralizers of elements.  An interpretation of these representations was given in \cite{DGH88}, where they were said to be twisted Hilbert spaces of an orbifold conformal field theory.  In our language, these are twisted modules of the vertex operator algebra $V^\natural$.  The theoretical details of vertex operator algebras and twisted modules are outside the scope of this paper, but we can think of these objects as graded vector spaces, where the grading is given by eigenvalues of a semisimple operator $L_0$.  When a vertex operator algebra has a unique irreducible $g$-twisted module for some automorphism $g$, Schur's Lemma produces a natural action of some central extension of the centralizer of $g$ on the twisted module.  The two facts we need concerning twisted modules are from \cite{DLM00}:
\begin{enumerate}
\item (Theorem 10.3) If $V$ is a holomorphic $C_2$-cofinite vertex operator algebra with central charge 24, and $g$ is a conformal automorphism of finite order, then there exists a unique irreducible $g$-twisted module $V(g)$ up to isomorphism.
\item (Theorems 5.4, 6.4, and 8.1) Let $M>0$ satisfy $g^M = h^M = 1$, and suppose we have a $G$-module isomorphism
\[ V(g^i) \cong \bigoplus_{k \in \frac{1}{N}\Z} \bigoplus_{j \in \Z/N\Z} V^{i,j/N}_k, \]
where $V(g^i)$ is the irreducible $g^i$-twisted module, and the outer sum gives the $L_0$-eigenvalue decomposition.  For any $\binom{ab}{cd} \in SL_2(\Z)$, $Z\left(g^i,h,\frac{a\tau+b}{c\tau+d}\right)$ lies in a certain space of holomorphic functions on $\HH$, each element of which is annihilated by a differential operator of the form $\left(\frac{d}{d\tau}\right)^m + \sum_{j=0}^{m-1} r_j(q) \left(\frac{d}{d\tau}\right)^j$, where $m > 0$ and $r_j(q) \in \C[[q^{1/M}]]$ converges on $\HH$.
\end{enumerate}

\begin{prop}
Suppose that $E$ is a Lie algebra Fricke compatible with $\mathcal{V}$, and suppose that $G$ acts conformally on a holomorphic $C_2$-cofinite vertex operator algebra $V$ of central charge 24, such that for all $i \in \Z/N\Z$, we have $G$-module isomorphisms $V(g^i) \cong \bigoplus_{k \in \frac{1}{N}\Z} \bigoplus_{j \in \Z/N\Z} V^{i,j/N}_k$ as in Fact \#2.  Then $Z(g,h,\tau)$ is a holomorphic congruence genus-zero function.
\end{prop}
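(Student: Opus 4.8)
The plan is to verify the hypotheses of Theorem~\ref{thm:main} for $Z(g,h,\tau)$ and then use the two facts quoted from \cite{DLM00} to rule out the trigonometric alternative, leaving only the holomorphic congruence genus-zero conclusion.

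First I would assemble the inputs. The preceding proposition shows that, since $E$ is Fricke compatible with $\mathcal{V}$, the family $Z$ is weakly Hecke-monic for $(g,h)$. Next, the function $Z(g,h,\tau)$ records the graded $h$-trace on the $p$-degree-one part of $E$, so its $q$-expansion is $\sum_{n \in \frac{1}{N}\Z} \text{Tr}(h|V^{1,n}_{1+n})\,q^n$; by condition~(4) of Fricke compatibility the bottom piece $V^{1,-1/N}_{1-1/N} \cong E_{1,-1/N}$ is one-dimensional, so $h$ acts on it by a nonzero root of unity and the leading term is $\text{Tr}(h|V^{1,-1/N}_{1-1/N})\,q^{-1/N}$. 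Thus $Z(g,h,\tau)$ has a pole at infinity, and Lemma~\ref{lem:form} together with Proposition~\ref{prop:expansion} pins its singular part down to a single monomial $\zeta q^{C/N}$. Finally, every Fourier coefficient of $Z(g,h,\tau)$ is a finite sum of traces of the finite-order element $h$ on finite-dimensional $G$-modules, hence a sum of roots of unity and therefore an algebraic integer.

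With these three properties in hand, Theorem~\ref{thm:main} applies and asserts that $Z(g,h,\tau)$ is either of trigonometric type or holomorphic congruence genus-zero. To exploit the vertex-algebraic hypotheses I would first invoke Fact~\#1: since $V$ is holomorphic and $C_2$-cofinite of central charge $24$ and each $g^i$ is a finite-order conformal automorphism, $V(g^i)$ is the \emph{unique} irreducible $g^i$-twisted module, so the assumed isomorphism $V(g^i) \cong \bigoplus_{k,j} V^{i,j/N}_k$ identifies our graded pieces with genuine twisted modules and makes Fact~\#2 available. Fact~\#2 then provides, for every $\binom{ab}{cd} \in SL_2(\Z)$, a monic linear differential operator with $q^{1/M}$-power-series coefficients annihilating $Z(g,h,\frac{a\tau+b}{c\tau+d})$.

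The crux, and the step I expect to be hardest, is ruling out the trigonometric alternative, and here the operator of Fact~\#2 is the decisive tool. Because its coefficients lie in $\C[[q^{1/M}]]$, the cusp $q=0$ is at worst a regular singular point, so every function in its solution space --- in particular each translate $Z(g,h,\frac{a\tau+b}{c\tau+d})$ --- has a local expansion at $\tau \to i\infty$ assembled from terms $q^{\alpha}(\log q)^j$ times $q^{1/M}$-power series; that is, it is meromorphic at the cusp. A trigonometric-type function fails this: after a substitution $\tau \mapsto a\tau+b$ it is a Laurent polynomial in $q$, so its image under $\tau \mapsto -1/\tau$ contains the terms $e(\pm 1/\tau)$, whose expansion near $\tau \to i\infty$ runs over powers of $1/\tau$ rather than powers of $q$ --- the hallmark of an irregular singularity, which cannot occur in the solution space of an operator with a regular singular point at $q=0$. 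Hence a trigonometric-type $Z(g,h,\tau)$ would contradict Fact~\#2, and only the holomorphic congruence genus-zero case survives. I expect the delicate bookkeeping to be reconciling the normalization of $Z$, whose pole is $q^{-1/N}$ rather than $q^{-1}$, with the rescaled trigonometric form, and verifying that Fact~\#2 constrains the whole $SL_2(\Z)$-orbit and not merely the expansion at infinity.
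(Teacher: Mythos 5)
Your proposal follows the paper's proof essentially verbatim: weak Hecke-monicity from the preceding proposition, the pole at infinity from the one-dimensionality of $E_{1,-1/N}\cong V^{1,-1/N}_{1-1/N}$, Theorem~\ref{thm:main} for the dichotomy, and Fact~\#2 to exclude the trigonometric type via the behaviour of its expansion at cusps other than infinity. Your additional remarks (that the coefficients are algebraic integers because they are traces of a finite-order element, and the regular-versus-irregular singularity explanation of why a trigonometric-type function cannot satisfy the differential equation) correctly fill in details the paper leaves implicit.
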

\begin{proof}
By the previous proposition, $Z$ is weakly Hecke-monic for $(g,h)$.  Since $E_{1,-1/N} = V^{1,-1/N}_{1-1/N}$ is one-dimensional, the trace of $h$ on this space is nonzero, so $Z(g,h,\tau)$ has a pole at infinity.  By Theorem \ref{thm:main}, $Z(g,h,\tau)$ is then either a holomorphic congruence genus-zero function, or of trigonometric type.  However, the expansion of any function of trigonometric type at a cusp other than infinity is not annihilated by any differential operator of the form given in Fact \#2 above.
\end{proof}

The hypotheses for this proposition are quite strong, but it is not a vacuous statement.  When $G = \MM$ and $g = 1$, this implies the McKay-Thompson series are holomorphic congruence genus-zero modular functions, assuming the positive subalgebra of the monster Lie algebra is Fricke compatible with $V^\natural$.  This compatibility was proved in section 8 of \cite{B92}.  When $G =2.B$, the nontrivial central extension of the baby monster simple group, and $g$ is the central element of order two, this yields holomorphic congruence genus-zero characters for the conjugacy class 2A case of generalized moonshine, assuming there exists a Lie algebra Fricke compatible with the suitable twisted modules.  The holomorphic congruence genus-zero result was proved in \cite{H03} using a construction of a Fricke compatible Lie algebra, and the above proposition allows one to eliminate the explicit computations in the final step of the proof, which involved matching the first 25 coefficients of the character for every conjugacy class of $G$ with Norton's list of known replicable functions.

\end{document}